\documentclass[12pt]{amsart}
\usepackage[english]{babel}
\usepackage{amsmath}
\usepackage{amsthm}
\usepackage{amssymb}
\usepackage{mathrsfs}
\usepackage{enumerate}
\usepackage[notcite, final, notref]{showkeys}
\usepackage{amsfonts}
\usepackage{dsfont}
\usepackage{float}
\usepackage{tikz}
\usepackage[siunitx]{circuitikz}
\usetikzlibrary{arrows,calc,chains,shapes,dsp}
\def\C{\mathbb C}
\def\R{{\mathbb R}}
\def\T{\hfill{\triangledown\triangledown\triangledown}}
\newtheorem{Pa}{Paper}[section]
\newtheorem{Tm}[Pa]{{\bf Theorem}}
\newtheorem{La}[Pa]{{\bf Lemma}}

\newtheorem{Cy}[Pa]{{\bf Corollary}}
\newtheorem{Rk}[Pa]{{\bf Remark}}
\newtheorem{Pn}[Pa]{{\bf Proposition}}

\newtheorem{Ex}[Pa]{{\bf Example}}
\newtheorem{Dn}[Pa]{{\bf Definition}}
\usepackage{xcolor}

\title[Canonical Hyper-Positive Real Functions]
{Quantitatively Hyper-Positive real Rational\\[0.2cm]
Functions IV: the Canonical Case}

\author[D. Alpay]{Daniel Alpay}
\address{(DA)
Faculty of Mathematics, Physics, and Computation\\
Schmidt College of Science and Technology\\
Chapman University\\
One University Drive
Orange, California 92866\\
USA}
\email{alpay@chapman.edu}
\thanks{Daniel Alpay thanks the Foster G. and Mary McGraw Professorship in
Mathematical Sciences, which supported this research.}

\author[I. Lewkowicz]{Izchak Lewkowicz}
\address{(IL) School of Electrical and Computer Engineering\\
Ben-Gurion University of the Negev\\ P.O.B. 653\\ Beer-Sheva, 84105\\
Israel}
\email{izchak@bgu.ac.il}

\begin{document}
\bibliographystyle{plain}

\begin{abstract}
In the linear time-invariant framework, passive systems are modeled by positive real
functions. Dissipative systems can be modeled by the subset of quantitatively
Hyper-positive real functions, related through nested inclusions. This family was
introduced and studied in our three previous works.
\smallskip

\noindent
Here, we further focus our attention on the proper subset of {\em canonical}
Hyper-Positive functions. Although this family is ``small", its exploration is well
motivated: First, this set turns to be associated with absolute stability (the Lurie
problem). Then, a systematic parametrization of all {\em canonical} Hyper-Positive
functions is introduced. Moreover each {\em canonical} Hyper-Positive function can
be viewed as an extreme point of the convex set of Hyper-Positive functions.
Specifically a convex combination of {\em canonical} Hyper-Positive is Hyper-Positive,
but not {\em canonical}. These observations hold in both frameworks: of analytic
functions and of state-space realization arrays.
\smallskip

\noindent
Technically, some of the analysis is facilitated by employing Quadratic Matrix
Inclusions of both, matrices and of matrix-valued rational functions.
\end{abstract}
\maketitle

\noindent AMS Classification:
34H05
47N70
93B20
93C15

\noindent {\em Key words}:
absolute stability,
electrical circuits,
hyper-positive real functions,
K-Y-P lemma,
matrix-convex set,
positive real functions,
state-space realization, 
balanced truncation.
\date{today}
\tableofcontents

\section{Introduction and Main Results}
\setcounter{equation}{0}

The set $\mathcal{P}$ serves as a model for passive, continuous-time, linear,
time-invariant systems, see e.g. \cite[Theorem 2.7.1]{AnderVongpa1973},
\cite[Section 3.18]{Belev1968}, \cite[Corollary 2.39]{BroLozaMasEge2020},
\cite[Section 6.3]{Khalil2000}, \cite[Proposition 1]{Will1976} and
\cite[Theorem 11]{YoulCastCarl1959}. Specifically, let $\mathbf{P}_m$
($\overline{\mathbf P}_m$) denotes the set of constant positive (semi-)definite
$m\times m$ matrices, where $m$ is a parameter. A $m\times m$-valued rational
function $F(s)$ is said to be Positive real, i.e. $F\in\mathcal{P}$, whenever,
\begin{equation}\label{eq:Def_P}
F(s)+{F(s)}^*\in\overline{\mathbf P}_m\quad\quad\forall s\in\C_R~,
\end{equation}
where $\C_R$ stands for the open right-half of the complex plane
(and $\C_L$ for the left-half).
\smallskip

The set of $\mathcal{P}$ functions was characterized, through its structure,
in \cite[Theorem 5.3]{Lewk2021a}.

\begin{Tm}\label{Tm:Set_Of_P_Functions}
The set $\mathcal{P}$ is a maximal matrix-convex cone of matrix-valued real rational functions
(of all dimensions), closed under inversion, where each element is analytic in $\C_R~$.
\smallskip

Conversely, a maximal matrix-convex cone of matrix-valued rational functions (of various
dimensions), closed under inversion, which are analytic in $\C_R~$ and containing the zero
degree function \mbox{$F_o(s)\equiv I$}, is the set $\mathcal{P}$.
\end{Tm}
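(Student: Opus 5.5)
The plan is to split the statement into its direct and converse parts. For the direct part I verify four things for $\mathcal{P}$: it is a matrix-convex cone, it is closed under inversion, its elements are analytic in $\C_R$, and it is maximal among such families. For the converse I show that any family $\mathcal{K}$ with these properties which contains $F_o\equiv I$ must be contained in $\mathcal{P}$; maximality then forces $\mathcal{K}=\mathcal{P}$.

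\emph{Matrix-convexity and the cone property.} I use the standard notion: whenever $F_j\in\mathcal{P}$ is $m_j\times m_j$ and $V_j$ are constant $m_j\times n$ matrices with $\sum_j V_j^*V_j=I_n$, the function $\sum_j V_j^*F_jV_j$ must again lie in $\mathcal{P}$. This is immediate from (\ref{eq:Def_P}), since
\[
\textstyle\sum_j V_j^*F_jV_j+\bigl(\sum_j V_j^*F_jV_j\bigr)^*=\sum_j V_j^*\bigl(F_j+F_j^*\bigr)V_j\in\overline{\mathbf P}_n
\]
pointwise on $\C_R$. Closure under multiplication by positive scalars is equally clear.

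\emph{Analyticity and inversion.} Analyticity in $\C_R$ is the classical fact that a rational $F$ with $F+F^*\succeq0$ on $\C_R$ can have no poles there. Near a pole $s_0$, the dominant term $C(s-s_0)^{-k}$ makes the Hermitian part take both signs on a small circle around $s_0$. For inversion, suppose $F(s)$ is invertible. Then
\[
F^{-1}+F^{-*}=F^{-1}\bigl(F+F^*\bigr)F^{-*}\succeq0,
\]
so $F^{-1}\in\mathcal{P}$ whenever $\det F\not\equiv0$. Here I must respect the paper's convention that only invertible elements are inverted.

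\emph{Maximality, and the converse.} The key step, and the main obstacle, is showing that a family $\mathcal{K}$ strictly larger than $\mathcal{P}$ cannot keep all the properties. Take $G\in\mathcal{K}\setminus\mathcal{P}$. Then there are $s_0\in\C_R$ and a unit vector $v$ with $\mathrm{Re}\,v^*G(s_0)v<0$. Compressing by $V=v$ gives a scalar $g=v^*Gv\in\mathcal{K}$, using matrix-convexity with $V^*V=1$. Adding positive multiples of constants, and using $I\in\mathcal{K}$ together with the cone property, gives $g+c\in\mathcal{K}$ for every $c>0$. Since $\mathrm{Re}\,g(s_0)<0$, the continuous function $\mathrm{Re}\bigl(g(s)+c\bigr)$ takes both signs on $\C_R$ for a suitable $c>0$. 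Indeed $\mathrm{Re}\,g$ is a harmonic function that is negative at $s_0$, and it must be unbounded or take a range of values unless $g$ is constant. If $g$ is constant with negative real part, I adjust by adding $i\beta$ or a small multiple of $s\in\mathcal{P}$. In either case some $c$ makes $h=g+c$ have a zero at a point $s_1\in\C_R$. Then $h^{-1}$, which lies in $\mathcal{K}$ by closure under inversion, has a pole in $\C_R$, contradicting analyticity.

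For the converse part, the same argument shows directly that every element of $\mathcal{K}$ lies in $\mathcal{P}$. Maximality of $\mathcal{K}$, combined with the fact that $\mathcal{P}$ itself has all the properties, then gives equality. The delicate point is arranging the zero of $g+c$ strictly inside $\C_R$. I would handle it by the open mapping theorem applied to the non-constant rational $g$ near $s_0$. Its image contains a neighborhood of $g(s_0)$, and hence a point with real part $-c$ for small perturbations, so a suitable $c>0$ yields $g(s_1)=-c$ with $s_1$ near $s_0$.
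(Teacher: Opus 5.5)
The paper gives no proof of this theorem; it quotes it from Lewkowicz's 2021 paper. So your argument has to stand on its own. Your checks of matrix-convexity, analyticity and closure under inversion are fine, but the step that produces a zero of $g+c$ in $\C_R$ fails. From $\mathrm{Re}\,g(s_0)<0$ you need $g(s_1)$ to be a \emph{negative real number} for some $s_1\in\C_R$. The open mapping theorem only gives a small disc around $g(s_0)$, which need not meet $(-\infty,0)$. A sign change of $\mathrm{Re}(g+c)$ only produces a zero of the real part. Concretely, $g(s)=is$ is analytic in $\C_R$, is not in $\mathcal{P}$, and takes no real value on $\C_R$ at all, so no $c>0$ works. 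The repair is to use inversion once more before the final inversion. Set $z=g(s_0)\neq 0$, so that $g^{-1}\in\mathcal{K}$, and take
\[
h:=g+|z|^2\,g^{-1}-2\,(\mathrm{Re}\,z)\cdot 1\in\mathcal{K},\qquad h(s_0)=z+\bar z-2\,\mathrm{Re}\,z=0 .
\]
All three coefficients are nonnegative. Moreover $h\not\equiv 0$ unless $g$ is constant, since otherwise $g$ would satisfy a fixed quadratic equation. Hence $h^{-1}\in\mathcal{K}$ has a pole at $s_0$. For the maximality of $\mathcal{P}$ alone you could instead add the constant $-g(s_0)$, which has positive real part and so lies in $\mathcal{P}\subseteq\mathcal{K}$.

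The second gap is in the converse, and it cannot be patched. For constant $g$ you add $i\beta$ or $\epsilon s$, but only $I$ is assumed to lie in $\mathcal{K}$, not $\mathcal{P}$. In fact the converse as literally stated is false. Let $\mathcal{K}_0$ be the family of all constant matrices of all sizes. It is a matrix-convex cone, closed under inversion, analytic in $\C_R$, and it contains $I$. It is also maximal: suppose an enlargement contained a nonconstant $F$. By polarization some compression $f=v^*Fv$ is nonconstant. Then $f-f(s_0)$ lies in the enlargement, because $-f(s_0)\in\mathcal{K}_0$, and its inverse has a pole at $s_0\in\C_R$. Yet $-1\in\mathcal{K}_0\setminus\mathcal{P}$. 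So the claim ``every element of $\mathcal{K}$ lies in $\mathcal{P}$'' cannot be proved from the stated hypotheses. One needs an extra assumption, e.g.\ $sI\in\mathcal{K}$, or merely that $\mathcal{K}$ contains some nonconstant function. Under that assumption your strategy goes through with the repair above:
\begin{enumerate}
\item Every nonconstant scalar element of $\mathcal{K}$ has nonnegative real part on $\C_R$.
\item A constant scalar $a\in\mathcal{K}$ with $\mathrm{Re}\,a<0$ is excluded. Take $g_0$ a nonconstant scalar compression and $t$ large; then $g_0+ta$ is nonconstant with negative real part at $s_0$, contradicting the first point.
\item Hence every compression $v^*Gv$ of every $G\in\mathcal{K}$ has nonnegative real part, so $\mathcal{K}\subseteq\mathcal{P}$, and maximality of $\mathcal{K}$ gives $\mathcal{K}=\mathcal{P}$.
\end{enumerate}
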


In \cite{AlpayLew2021}, \cite{AlpayLew2024} and \cite{AlpayLew2025a}, we explored subsets of
dissipative systems within $\mathcal{P}$. Specifically, we parametrized sub-families where,
in a rigorous sense, one is ``more stable" than the other. Here are the details.

\begin{Dn}\label{Dn:Hyper_Positive}
{\rm Let \mbox{${\scriptstyle\color{blue}T}$} be a matricial parameter so that
\[
I_m\succ{\color{blue}T}\succcurlyeq 0.
\]
We shall call a $m\times m$-valued rational function $F(s)$,
$\begin{smallmatrix}{\color{blue}T}\end{smallmatrix}$-{\em Hyper-Positive}, i.e.
\mbox{$F\in\mathcal{HP}_{\color{blue}T}$,} if\begin{footnote}{To emphasize the
fact that ${\color{blue}T}$ is a {\em constant} parameter, whenever
next to a function, a smaller font is employed.}\end{footnote},
\begin{equation}\label{eq:HP_Delta}
F(s)+(F(s))^*\succcurlyeq\begin{smallmatrix}{\color{blue}T}\end{smallmatrix}+
(F(s))^*\begin{smallmatrix}
{\color{blue}T}\end{smallmatrix}F(s)\quad\forall s\in\C_R~.
\end{equation}
}
$\T$
\end{Dn}
\smallskip

Already at this stage, the introduction of Hyper-Positive functions, may be viewed as
a refinement of the set $\mathcal{P}$. Specifically, we have the following.

\begin{Pn}\label{Pn:HP_Delta_Order}
Definition \ref{Dn:Hyper_Positive} induces a partial order among family of
functions, namely if
\[
I_m\succ{\color{cyan}T_2}\succcurlyeq{\color{blue}T_1}\succcurlyeq 0
\]
then
\begin{equation}\label{eq:Partial_Order}
\mathcal{HP}_{\color{cyan}T_2}\subset\mathcal{HP}_{\color{blue}T_1}~.
\end{equation}
In addition there is a boundary condition,
\[
{\color{blue}T_1}=0_{m\times m}\quad\Longrightarrow\quad\mathcal{HP}_{{\color{blue}T_1}=0}
=\mathcal{P}.
\]
\end{Pn}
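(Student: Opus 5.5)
The plan is to argue pointwise in $s\in\C_R$, since both the definition of $\mathcal{HP}_{\color{blue}T}$ in \eqref{eq:HP_Delta} and that of $\mathcal{P}$ in \eqref{eq:Def_P} are pointwise matrix inequalities. For fixed $s$ I will write $F=F(s)$ and use the right-hand side of \eqref{eq:HP_Delta} as a function of the parameter. The key observation is that the map
\[
{\color{blue}T}\longmapsto {\color{blue}T}+F^*{\color{blue}T}F
\]
is linear and monotone with respect to the Loewner order. Monotonicity holds because congruence preserves semi-definiteness: if $\Delta\succcurlyeq 0$, then $F^*\Delta F\succcurlyeq 0$ for every $F$.

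The steps for the inclusion are as follows. Let $F\in\mathcal{HP}_{\color{cyan}T_2}$ and set $\Delta={\color{cyan}T_2}-{\color{blue}T_1}\succcurlyeq 0$. Then for every $s\in\C_R$,
\[
F(s)+F(s)^*\succcurlyeq {\color{cyan}T_2}+F(s)^*{\color{cyan}T_2}F(s)
={\color{blue}T_1}+F(s)^*{\color{blue}T_1}F(s)+\bigl(\Delta+F(s)^*\Delta F(s)\bigr)
\succcurlyeq {\color{blue}T_1}+F(s)^*{\color{blue}T_1}F(s).
\]
The last step uses $\Delta\succcurlyeq0$ and $F(s)^*\Delta F(s)\succcurlyeq 0$. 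Hence $F\in\mathcal{HP}_{\color{blue}T_1}$, which gives \eqref{eq:Partial_Order}. The partial-order claim then follows formally, because the Loewner order on parameters is itself a partial order and this map reverses inclusion.

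For the boundary condition I substitute ${\color{blue}T_1}=0$ into \eqref{eq:HP_Delta}. The right-hand side vanishes identically, so the condition becomes exactly $F(s)+F(s)^*\succcurlyeq 0$ on $\C_R$, which is \eqref{eq:Def_P}. Therefore $\mathcal{HP}_0=\mathcal{P}$.

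There is no real obstacle here. The only point deserving care is how to read ``$\subset$''. If strictness is intended when ${\color{cyan}T_2}\neq{\color{blue}T_1}$, I would exhibit constant functions $F\equiv cI$ with $c>0$. For such $F$, membership in $\mathcal{HP}_{\color{blue}T}$ reduces to
\[
\lambda_{\max}({\color{blue}T})\le \frac{2c}{1+c^2}.
\]
When $\lambda_{\max}({\color{cyan}T_2})>\lambda_{\max}({\color{blue}T_1})$, choosing $c$ so that $2c/(1+c^2)$ lies strictly between these two eigenvalues gives an $F$ in $\mathcal{HP}_{\color{blue}T_1}$ but not in $\mathcal{HP}_{\color{cyan}T_2}$. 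When the top eigenvalues coincide, I would instead take $F\equiv cP+dP^{\perp}$ with $P$ a suitable spectral projection of $\Delta$; this case needs a small extra computation. I would treat strictness as a remark rather than as part of the claimed statement.
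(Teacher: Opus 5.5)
Your proof is correct, and it is the direct pointwise verification the paper relies on: the paper states Proposition~\ref{Pn:HP_Delta_Order} without a written proof, treating it as immediate from Definition~\ref{Dn:Hyper_Positive}, and the splitting $T_2+F(s)^*T_2F(s)=T_1+F(s)^*T_1F(s)+\bigl(\Delta+F(s)^*\Delta F(s)\bigr)$ with $\Delta=T_2-T_1\succcurlyeq 0$, together with substituting $T_1=0$ in \eqref{eq:HP_Delta}, is all that is needed. You are also right that ``$\subset$'' must mean $\subseteq$ here, since $T_2=T_1$ is permitted, so the strictness discussion is an optional aside and not part of the claim.
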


\begin{Rk}
{\rm
{\bf a.}~ Following Eq. \eqref{eq:Partial_Order}, one can say that $
\mathcal{HP}_{\color{cyan}T_2}$ is ``more Lurie-stable" than
$\mathcal{HP}_{\color{blue}T_1}$. This point is further discussed in
items {\bf b.} and {\bf c.} of Remark \ref{Rk:Circle_Criterion} below.
\bigskip

{\bf b.}~
In the special case where in Eq. \eqref{eq:HP_Delta} \mbox{${\color{blue}T}
=\begin{smallmatrix}{\color{blue}\beta}\end{smallmatrix}I_m$,} where
\mbox{$\begin{smallmatrix}{\color{blue}\beta}\end{smallmatrix}\in[0,~1)$} we
shall say that \mbox{$F\in\mathcal{HP}_{\color{blue}\beta}$} if
\begin{equation}\label{eq:Basic_Def_HP_Beta}
F(s)+{F(s)}^*\succcurlyeq\begin{smallmatrix}{\color{blue}\beta}\end{smallmatrix}
(I_m+{F(s)}^*F(s))\quad\quad\forall s\in\C_R~.
\end{equation}
{\bf c.}~
Strictly speaking, Eq. \eqref{eq:HP_Delta} describes {\em Right}~
$\mathcal{HP}_{\color{blue}T}$ functions. {\em Left}~ 
$\mathcal{HP}_{\color{blue}T}$ functions are given by
\begin{equation}\label{eq:Left_HP_T}
F(s)+(F(s))^*\succcurlyeq\begin{smallmatrix}{\color{blue}T}\end{smallmatrix}+F(s)
\begin{smallmatrix}{\color{blue}T}\end{smallmatrix}(F(s))^*\quad\forall s\in\C_R~.
\end{equation}
The lion share of this work focuses on {\em Right} $\mathcal{HP}_{\color{blue}T}$
functions. Some of the slight differences between ``Left" and ``Right"
$\mathcal{HP}_T$ functions were addressed in \cite[Sections 4-6]{AlpayLew2024},
\cite{AlpayLew2025a}, \cite{AlpayColLewSab2025a}. Subsection
\ref{counter-123} below, illustrates this difference.
}
$\T$
\end{Rk}
\smallskip

In the context of the set $\mathcal{HP}_{\color{blue}\beta}$ from Eq.
\eqref{eq:Basic_Def_HP_Beta} Proposition \ref{Pn:HP_Delta_Order} takes the
following form.
\smallskip

\begin{Cy}\label{Cy:HP_beta_Order} 
\[
1>\begin{smallmatrix}{\color{cyan}{\beta}_2}\end{smallmatrix}\geq
\begin{smallmatrix}{\color{blue}{\beta}_1}\end{smallmatrix}\geq 0\quad\Longrightarrow\quad
\mathcal{HP}_{\color{cyan}{\beta}_2}\subset\mathcal{HP}_{\color{blue}{\beta}_1}\subset
\mathcal{HP}_{{\color{blue}\beta}=0}=\mathcal{P}.
\]
\end{Cy}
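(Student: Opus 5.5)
The plan is to read Corollary~\ref{Cy:HP_beta_Order} as the scalar-parameter specialization of Proposition~\ref{Pn:HP_Delta_Order}. Set ${\color{cyan}T_2}=\begin{smallmatrix}{\color{cyan}\beta_2}\end{smallmatrix}I_m$ and ${\color{blue}T_1}=\begin{smallmatrix}{\color{blue}\beta_1}\end{smallmatrix}I_m$. The hypothesis $1>\beta_2\geq\beta_1\geq0$ gives
\[
I_m\succ{\color{cyan}T_2}\succcurlyeq{\color{blue}T_1}\succcurlyeq0 ,
\]
because $(1-\beta_2)I_m\succ0$, $(\beta_2-\beta_1)I_m\succcurlyeq0$ and $\beta_1 I_m\succcurlyeq0$. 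With $T=\beta I_m$, Eq.~\eqref{eq:HP_Delta} is exactly Eq.~\eqref{eq:Basic_Def_HP_Beta}, since $F^*(\beta I_m)F=\beta F^*F$. Hence $\mathcal{HP}_{\color{blue}\beta}=\mathcal{HP}_{\beta I_m}$. Eq.~\eqref{eq:Partial_Order} then yields the first inclusion, and the boundary condition with $\beta_1=0$ yields $\mathcal{HP}_{{\color{blue}\beta}=0}=\mathcal{P}$.

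For completeness I would also verify the scalar case directly, in case Proposition~\ref{Pn:HP_Delta_Order} is not to be taken as given. The key step is an intermediate inequality. Fix $s\in\C_R$ and $F\in\mathcal{HP}_{\color{cyan}\beta_2}$, and write $H=F(s)+F(s)^*$ and $Q=I_m+F(s)^*F(s)\succ0$. The definition gives $H\succcurlyeq\beta_2 Q$. Since $Q\succcurlyeq0$ and $\beta_2\geq\beta_1$, we get $\beta_2Q\succcurlyeq\beta_1Q$, so $H\succcurlyeq\beta_1Q$ and $F\in\mathcal{HP}_{\color{blue}\beta_1}$. Taking $\beta_1\geq0$ in the same argument gives $H\succcurlyeq0$, so $F\in\mathcal P$. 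Conversely, at $\beta=0$ inequality \eqref{eq:Basic_Def_HP_Beta} reduces literally to \eqref{eq:Def_P}, which is the equality $\mathcal{HP}_0=\mathcal P$.

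The only point needing care is the general-$T$ monotonicity, if one proves Proposition~\ref{Pn:HP_Delta_Order} itself rather than the corollary. There the required inequality $T_2+F^*T_2F\succcurlyeq T_1+F^*T_1F$ follows from $T_2-T_1\succcurlyeq0$ together with the congruence $F^*(T_2-T_1)F\succcurlyeq0$. In the scalar case there is no real obstacle, since everything reduces to multiplying the positive semidefinite matrix $Q$ by scalars.
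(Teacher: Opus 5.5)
Your proposal is correct and matches the paper, which obtains this corollary by specializing Proposition~\ref{Pn:HP_Delta_Order} to $T=\beta I_m$, so that Eq.~\eqref{eq:HP_Delta} becomes Eq.~\eqref{eq:Basic_Def_HP_Beta}. Your direct check is also sound: $(\beta_2-\beta_1)(I_m+F^*F)\succcurlyeq 0$ in the scalar case, and more generally $(T_2-T_1)+F^*(T_2-T_1)F\succcurlyeq 0$, supply the monotonicity that the paper leaves implicit.
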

This order is illustrated in Figure \ref{Fig:Degree_One_HP} below.
\bigskip

In this work we further narrow our scope to focus on the subset of {\em canonical}
$\mathcal{HP}_{\color{blue}T}$ functions.

\begin{Dn}\label{Dn:Canonical_Hyper_Positive}
{\rm
For a given \mbox{$I_m\succ{\color{blue}T}\succcurlyeq 0$,} we shall say that a
$m\times m$-valued \mbox{$\mathcal{HP}_{\color{blue}T}$} function, $F(s)$ is
{\rm canonical}, if 
\begin{equation}\label{eq:Canonical_HP_T}
F(s)+(F(s))^*-(\begin{smallmatrix}{\color{blue}T}\end{smallmatrix}+(F(s))^*
\begin{smallmatrix}{\color{blue}T}\end{smallmatrix}F(s))\in\left\{\begin{matrix}
\overline{\mathbf P}_m&&\forall s\in\C_R\\~\\0&&\forall s\in{i}\R.\end{matrix}\right.
\end{equation}
As before, for a prescribed \mbox{$\begin{smallmatrix}{\color{blue}\beta}
\end{smallmatrix}\in[0,~1)$} we say that \mbox{$F(s)$} in
\mbox{$\mathcal{HP}_{\color{blue}\beta}$} is {\em canonical}, if
}
\begin{equation}\label{eq:Def_Canoinal_Quad_HP_Beta}
F(s)+{F(s)}^*-\begin{smallmatrix}{\color{blue}\beta}\end{smallmatrix}\left(I_m+
{F(s)}^*F(s)\right)\in\left\{\begin{matrix}\overline{\mathbf P}_m&&\forall s\in\C_R
\\~\\0&&\forall s\in{i}\R.
\end{matrix}\right.
\end{equation}
$\T$
\end{Dn}

We start with the main structural properties of this family.

\begin{Tm}\label{Tm:Set_Of_HP_Functions}
Let ${\color{blue}T}$, where \mbox{$I_m\succ{\color{blue}T}\succcurlyeq 0$,} be given.

{\bf A.}
Let the set $\mathcal{HP}_{\color{blue}T}$ of 
functions be as in Definition \ref{Dn:Hyper_Positive}.
\begin{itemize}
\item[(i)~~~]{}
This set is closed under inversion, i.e.  \mbox{$F(s)\in\mathcal{HP}_{\color{blue}T}$}
implies that \mbox{${F(s)}^{-1}\in\mathcal{HP}_{\color{blue}T}$} as well.
\smallskip

\item[(ii)~~]{}
This set is convex.
\smallskip

\item[(iii)~]{}
For given \mbox{$\begin{smallmatrix}{\color{blue}\beta}\end{smallmatrix}\in[0,~1)$},
the set $\mathcal{HP}_{\color{blue}\beta}$ is matrix-convex.
\end{itemize}

{\bf B.}
Let the subset {\em canonical} $\mathcal{HP}_{\color{blue}T}$ functions, be as in 
Definition \ref{Dn:Canonical_Hyper_Positive}.
\begin{itemize}

\item[(i)~~~]{}
Let $F_0(s)$ and $F_1(s)$ be a pair of {\em canonical} $\mathcal{HP}_{\color{blue}T}$
functions. Then the convex-hull of $F_0(s)$ and $F_1(s)$ is comprized of
{\em canonical} $\mathcal{HP}_{\color{blue}T}$ functions as well, if and only if,
\begin{equation}\label{eq:Condition_Canonical_Convex}
((F_0-F_1)(s))^*{\scriptstyle\color{blue}T}(F_0-F_1)(s)\equiv 0\quad\quad\forall s\in{i}\R.
\end{equation}
Else functions in this convex-hull are not {\em canonical} and belong to
$\mathcal{HP}_{\color{cyan}\hat{T}}$ for some
\mbox{$I_m\succ{\color{cyan}\hat{T}}\succcurlyeq{\color{blue}T}$.}
\smallskip

\item[(ii)~~]{}
Whenever \mbox{$F\in\mathcal{HP}_{\color{blue}T}$} is {\em canonical}, its inverse
$\left(F(s)\right)^{-1}$ is 
another {\em canonical} function
within the same \mbox{$\mathcal{HP}_{\color{blue}T}$.}
\smallskip

\item[(iii)~]{}
Whenever $f(s)$ is a scalar {\em canonical} \mbox{$\mathcal{HP}_{\color{blue}\beta}$}
function, \mbox{$f_1(s):=\frac{1}{2}(f(s)+{f(s)}^{-1})$} is another {\em canonical}
function within \mbox{$\mathcal{HP}_{\color{blue}{\beta}_1}$,} where
\mbox{${\color{blue}\begin{smallmatrix}{\beta}_1\end{smallmatrix}}=
\begin{smallmatrix}\left(\frac{1}{2}({\color{blue}\beta}+\frac{1}{{\color{blue}\beta}})
\right)^{-1}\end{smallmatrix}$.}
\end{itemize}
\end{Tm}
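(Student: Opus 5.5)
The plan is to reduce every item to one algebraic identity for the quadratic ``defect''
\[
Q_{T}(F):=F+F^*-{\scriptstyle T}-F^*{\scriptstyle T}F ,
\]
so that $F\in\mathcal{HP}_{T}$ means $Q_T(F(s))\succcurlyeq 0$ on $\C_R$, and canonicity adds $Q_T(F(s))=0$ on $i\R$.

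For \textbf{A(i)} and \textbf{B(ii)} I would use the congruence
\[
(F^{-1})^*\,Q_T(F)\,F^{-1}=F^{-1}+(F^{-1})^*-(F^{-1})^*{\scriptstyle T}F^{-1}-{\scriptstyle T}=Q_T(F^{-1}),
\]
which holds pointwise wherever $F(s)$ is invertible. A congruence preserves $\overline{\mathbf P}_m$ and preserves $0$. This gives $F^{-1}\in\mathcal{HP}_T$ off the finite set of zeros of $\det F$, and then on all of $\C_R$ by continuity; if $F$ is canonical, then so is $F^{-1}$, with the same continuity argument on $i\R$. There is a preliminary point: $\det F\not\equiv 0$ is needed for $F^{-1}$ to exist at all. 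When $T\succ0$ this follows from the definition, since $F(s)v=0$ gives $0\succcurlyeq v^*Tv$. In general I would take it as the standing assumption under which ``inverse'' is meaningful.

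For \textbf{A(ii)} and \textbf{B(i)}, let $F_\theta=\theta F_0+(1-\theta)F_1$ and $D=F_0-F_1$. Expanding gives
\[
Q_T(F_\theta)=\theta Q_T(F_0)+(1-\theta)Q_T(F_1)+\theta(1-\theta)\,D^*{\scriptstyle T}D .
\]
All three terms are positive semidefinite on $\C_R$, which proves convexity. If $F_0$ and $F_1$ are canonical, then on $i\R$ the identity reduces to $Q_T(F_\theta)=\theta(1-\theta)D^*TD$. For $\theta\in(0,1)$ this vanishes identically exactly when \eqref{eq:Condition_Canonical_Convex} holds, which is the claimed iff.

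When the condition fails, the remaining claim is that $F_\theta$ lies in some $\mathcal{HP}_{\hat T}$ with $\hat T\succcurlyeq T$. I would take $\hat T=T+\varepsilon P$, where the extra term absorbs part of the slack $\theta(1-\theta)D^*TD$. This requires comparing $\varepsilon(P+F_\theta^*PF_\theta)$ with the slack, uniformly on $\overline{\C_R}$ including at infinity. I expect this to be the main obstacle, because the slack can degenerate at isolated boundary points or in some directions. If a strict enlargement cannot be guaranteed, I would fall back to $\hat T=T$ and note only that $F_\theta$ is non-canonical but still hyper-positive.

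For \textbf{A(iii)} I would verify matrix-convexity directly. Let $F_j\in\mathcal{HP}_\beta$ and let $V_j$ satisfy $\sum V_j^*V_j=I$. Put $F=\mathrm{diag}(F_j)\in\mathcal{HP}_\beta$ and $V=\mathrm{col}(V_j)$, so that $V^*V=I$ and $VV^*\preccurlyeq I$. Then
\[
Q_\beta(V^*FV)=V^*Q_\beta(F)V+\beta\,V^*F^*(I-VV^*)FV\succcurlyeq0 .
\]
This uses $\beta V^*V=\beta I$, and it is the point where the scalar form $T=\beta I$ is essential.

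For \textbf{B(iii)} I would pass to the Cayley transform $g=(f-1)/(f+1)$. A short computation shows that $2\,\mathrm{Re} f\ge\beta(1+|f|^2)$ is equivalent to
\[
|g|\le\rho,\qquad \rho^2=\frac{1-\beta}{1+\beta},
\]
with equality on $i\R$ in the canonical case. For $f_1=\frac12(f+f^{-1})$ one gets
\[
g_1=\frac{f+f^{-1}-2}{f+f^{-1}+2}=\frac{(f-1)^2}{(f+1)^2}=g^2 ,
\]
so $|g_1|\le\rho^2$ on $\C_R$, with equality on $i\R$. Solving $\frac{1-\beta_1}{1+\beta_1}=\rho^4$ gives $\beta_1=2\beta/(1+\beta^2)$, which is the stated value. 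Hence $f_1$ is canonical in $\mathcal{HP}_{\beta_1}$.
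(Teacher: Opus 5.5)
Your proposal is correct. For B(i) and B(ii) it coincides with the paper's proof.

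\begin{itemize}
\item For B(i), the paper uses the same decomposition of the defect of $F_\alpha=\alpha F_1+(1-\alpha)F_0$ into $\Delta_\alpha+\alpha(1-\alpha)(F_0-F_1)^*T(F_0-F_1)$.
\item For B(ii), the paper multiplies the quadratic form by $(F^{-1})^*$ and $F^{-1}$ and then swaps the block rows. That is your congruence $Q_T(F^{-1})=(F^{-1})^*Q_T(F)F^{-1}$.
\item Part A is not proved in the paper; it is quoted from the authors' earlier work. Your direct arguments therefore make the theorem self-contained, in particular the compression identity $Q_\beta(V^*FV)=V^*Q_\beta(F)V+\beta V^*F^*(I-VV^*)FV$ for A(iii).
\item In A(i), the ``by continuity'' step at a zero of $\det F$ in $\C_R$ needs one more observation. $F^{-1}$ has positive semidefinite Hermitian part on a punctured neighbourhood of such a point, so it cannot have a pole there. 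When $T\succ 0$ the issue does not arise.
\end{itemize}

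The genuine difference is in B(iii). The paper computes, pointwise, $\frac{f_1+f_1^*}{1+|f_1|^2}=\frac{2x}{1+x^2}$ with $x=\frac{f+f^*}{1+|f|^2}$, and then sets $x=\beta$ on $i\R$. This checks the boundary equality. The inequality on $\C_R$ tacitly needs $\beta\le x\le 1$ together with the monotonicity of $x\mapsto\frac{2x}{1+x^2}$ on $[0,1]$. Your Cayley route uses $g_1=g^2$, so the radius $\rho=\sqrt{(1-\beta)/(1+\beta)}$ of the disk $\mathbb{D}_{\rm Center}(\beta)$ is squared. This gives the interior inequality and the boundary equality in one step. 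It also explains $\beta_1=2\beta/(1+\beta^2)$ geometrically.

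Your doubt about a strict enlargement $\hat T$ in B(i) is justified. Suppose $F_0$ and $F_1$ agree at a point of $i\R$. An example is $\phi_2|_{b=1}$ and $\phi_2|_{b=20}$ with $\beta=\frac45$, both equal to $2$ at $s=0$ (Figure~\ref{Figure:Convex_Two_Canonical_1}). Then every convex combination still has zero defect at that point. Hence $Q_{\hat\beta}(F_\theta)(0)<0$ for every $\hat\beta>\beta$, and by continuity this also holds at nearby points of $\C_R$, so no strict enlargement exists. The statement only asks for $\hat T\succcurlyeq T$, so your fallback $\hat T=T$ proves exactly what is claimed. The paper's own argument for that clause gives nothing stronger.
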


\begin{Rk}\label{Rk:Structure_Canonical}
{\rm
{\bf a.}~ {\em Maximality}, is the key to the gap between
Theorem \ref{Tm:Set_Of_P_Functions} and part {\bf A.} of 
Theorem \ref{Tm:Set_Of_HP_Functions}.
\smallskip

{\bf b.}~
Part {\bf A.} of Theorem \ref{Tm:Set_Of_HP_Functions} is part of
\cite[Therorem 1.9]{AlpayLew2025a}.
\smallskip

{\bf c.}~ 
Roughly speaking, for a given \mbox{$I_m\succ{\color{blue}T}\succcurlyeq 0$,} we shall
say that a $m\times m$-valued $F(s)$ is a {\em boundary}
\mbox{$\mathcal{HP}_{\color{blue}T}$} function, if
\[
F(s)+(F(s))^*-(\begin{smallmatrix}{\color{blue}T}\end{smallmatrix}+(F(s))^*
\begin{smallmatrix}{\color{blue}T}\end{smallmatrix}F(s))\in\left\{
\begin{matrix}\overline{\mathbf P}_m&&
\forall s\in\C_R\\~\\ 0&&{\rm for~some}~s\in{i}\R.\end{matrix}\right.
\]
In this sense, {\em canonical} \mbox{$\mathcal{HP}_{\color{blue}T}$} functions in
Eq.  \eqref{eq:Canonical_HP_T} are the extreme points of the convex
\mbox{$\mathcal{HP}_{\color{blue}T}$} family. This point is discussed in Subsection
\ref{Subsec:Convex_Combination_Rational} below.
\smallskip

{\bf d.}~ For a proof of part {\bf B.} of Theorem \ref{Tm:Set_Of_HP_Functions}, see
Subsection \ref{Subsec:Proof_of_Theorem_1.7_B} below.

$\T$
}
\end{Rk}
\smallskip

Although the family of {\em canonical} Hyper-Positive functions is small, it is of
interest: First, in its own right, see e.g. Section \ref{Sec:Lurie_Problem} and
Figures \ref{Fig:Impedance_Degree_One}, \ref{Fig:Degree_One_HP}. Second, it is on
the boundary of the
{\em convex} family of Hyper-Positive functions, so one can expect the following.

\begin{Pn}\label{Pn:Convex_Combination_Degree_One}
For \mbox{$\begin{smallmatrix}{\color{blue}\beta}\end{smallmatrix}\in(0,~1)$,}
a scalar \mbox{$\mathcal{HP}_{\color{blue}\beta}$} function of degree one, can
always be written as a convex combination of a pair of {\em canonical}
\mbox{$\mathcal{HP}_{\color{blue}\beta}$}
functions of degree one (and degree zero).
\end{Pn}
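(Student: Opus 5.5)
The plan is to move the problem to the unit disk, where both $\mathcal{HP}_{\color{blue}\beta}$ and the canonical condition become transparent, and then to treat degree-one functions as points of a planar square.

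\textbf{Reduction to Schur functions.} For a scalar $f$, inequality \eqref{eq:Basic_Def_HP_Beta} is equivalent to
\[
\bigl|f(s)-\tfrac{1}{\beta}\bigr|^2\le\tfrac{1}{\beta^2}-1 \qquad \forall s\in\C_R .
\]
So $f\in\mathcal{HP}_{\color{blue}\beta}$ exactly when $f=c+r\,g$, where $c=1/\beta$, $r=\sqrt{1/\beta^2-1}$, and $g$ is a real rational function with $|g|\le1$ on $\C_R$. By \eqref{eq:Def_Canoinal_Quad_HP_Beta}, $f$ is canonical exactly when, in addition, $|g|=1$ on $i\R$, i.e. $g$ is all-pass. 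The map $g\mapsto c+rg$ is affine and preserves degree, so convex combinations carry over unchanged. It therefore suffices to prove the statement for real degree-one Schur functions $g$.

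\textbf{Parametrizing degree one.} A real degree-one $g$ analytic in $\C_R$ and bounded there has the form $g(s)=(ps+q)/(s+a)$.
\begin{itemize}
\item I will check that $a>0$. A pole at $0$ with $q\ne0$ is unbounded near $0$, and if $q=0$ the pole cancels and the degree drops.
\item Set $x=g(\infty)=p$ and $y=g(0)=q/a$. Then $g(s)=x+(y-x)\frac{a}{s+a}$, which is affine in $(x,y)$.
\item Since $g$ is real of degree one, $g(i\R)$ is a circle symmetric about the real axis with diameter $[\min(x,y),\max(x,y)]$. Hence $g$ is Schur if and only if $(x,y)\in[-1,1]^2$, by the maximum principle.
\end{itemize}
Under this correspondence the four corners of the square are exactly the all-pass functions with pole $a$:
\[
(1,1)\mapsto 1,\qquad (-1,-1)\mapsto -1,\qquad (1,-1)\mapsto \tfrac{s-a}{s+a},\qquad (-1,1)\mapsto -\tfrac{s-a}{s+a}.
\]
Pulling back through $g\mapsto c+rg$, these are the canonical $\mathcal{HP}_{\color{blue}\beta}$ functions of degree zero and one. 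Because the parametrization is affine, every degree-one $\mathcal{HP}_{\color{blue}\beta}$ function is a convex combination of these canonical functions, all sharing the same pole $a$.

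\textbf{Cutting down to a pair.} This is the main obstacle. A generic interior point of the square is not on a segment joining two corners, and canonical functions with a different pole $a'\neq a$ cannot be used, since mixing poles raises the degree.

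What I would try first is this. Split the square along the diagonal from $(1,1)$ to $(-1,-1)$. A point in either triangle lies on a segment from the off-diagonal corner, $\pm\frac{s-a}{s+a}$, to a point on the diagonal. Points on the diagonal are real constants in $[-1,1]$, i.e. degree-zero $\mathcal{HP}_{\color{blue}\beta}$ functions that are convex combinations of $\pm1$. This realizes $f$ as a convex combination of one canonical degree-one function and one degree-zero function, matching the parenthetical ``(and degree zero)'' in the statement.

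I would then have to check whether the intended reading allows that degree-zero term to be a non-canonical constant in the disk. If it does not, the honest statement is a combination of at most three canonical functions, by Carath\'eodory in the plane. I expect this point, rather than the routine reduction steps, to require the most care.
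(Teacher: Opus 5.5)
\textbf{Verdict: genuine gap.} Your reduction $f=\frac{1}{\beta}+\sqrt{\frac{1}{\beta^{2}}-1}\,g$ is correct. So are the square parametrization by $x=g(\infty)$, $y=g(0)$, and your obstruction. With $\beta$ held fixed and canonical pieces of degree at most one, a degree-one convex combination of two pieces must lie on an edge of the square or on the anti-diagonal $x=-y$, because mixing poles raises the degree. A generic interior point is therefore out of reach. But this means your proposal does not prove the proposition. It ends with either one canonical function plus a generally non-canonical constant, or three canonical functions, and leaves the question open.

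The idea you are missing is the one the paper's proof uses: the parameter of the canonical pair may depend on $f$. Put $p=f(\infty)$ and $q=f(0)$.
\begin{itemize}
\item The paper sets $\frac{1}{\beta_1}:=\frac{p+q}{2}$ and writes $f=\frac{1}{\beta_1}+\frac{p-q}{2}\,\frac{s-a}{s+a}$.
\item It then obtains $f=\alpha\phi_1+(1-\alpha)\phi_2$, where $\phi_{1,2}=\frac{1}{\beta_1}\pm\frac{\sqrt{1-\beta_1^{2}}}{\beta_1}\,\frac{s-a}{s+a}$ are canonical in $\mathcal{HP}_{\beta_1}$ with the same pole $a$.
\item The weight is given by $2\alpha-1=\frac{(p-q)\beta_1}{2\sqrt{1-\beta_1^{2}}}$.
\end{itemize}
In your picture: instead of the fixed $\beta$-square, use the square whose anti-diagonal passes through $f$.

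Your hesitation was nevertheless justified, because this argument has limits.
\begin{itemize}
\item Having $\alpha\in[0,1]$ requires $\frac{(p-q)^{2}}{4}\le\frac{1}{\beta_1^{2}}-1$, i.e.\ $pq\ge 1$. The paper's ``without loss of generality $\beta_2\ge\beta_1$'' conceals this. For instance, $f_2$ in the paper's Figure, with $\beta=\frac{20}{29}$, has $pq=\frac{8}{15}$ and $\frac{1}{\beta_1}=\frac{13}{15}<1$.
\item Moreover, $\beta_1$ can be smaller than the prescribed $\beta$, in which case $\phi_{1,2}\notin\mathcal{HP}_\beta$.
\end{itemize}
So the paper is really proving a version with an $f$-dependent parameter. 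Read with $\beta$ fixed, your obstruction shows the statement fails off the edges and the anti-diagonal.

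Your own edge argument gives a complete proof of the $f$-dependent version in every case. Define $\beta'$ by $\frac{1-\sqrt{1-\beta'^{2}}}{\beta'}=\min\{p,q\}$ if $pq\le 1$, or by $\frac{1+\sqrt{1-\beta'^{2}}}{\beta'}=\max\{p,q\}$ if $pq\ge 1$. Since $p,q$ lie in the $\beta$-interval, automatically $\beta'\in[\beta,1)$. Then $f$ lies on an edge of the $\beta'$-square. Hence $f$ is a convex combination of one canonical degree-one function and one canonical constant, both in $\mathcal{HP}_{\beta'}\subset\mathcal{HP}_\beta$. This is exactly the ``(and degree zero)'' pattern of the paper's $f_2=\frac{4}{9}f_1+\frac{5}{9}f_5$, where $\beta'=\frac{20}{29}$.
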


A proof is given in Subsection \ref{Subsec:Convex_Combination_Rational} below.
\bigskip

It turns out that also in the state-space realization setup, the family of
$\mathcal{HP}_{\color{blue}T}$ functions possess a rich structure. To explore it,
one needs to first resort to the celebrated Kalman-Yakubovich-Popov Lemma
characterizing $\mathcal{P}$ rational functions through the respective state
space realization. To this end, recall that a \mbox{$m\times m$-valued} rational
function $F(s)$, with no pole at infinity, admits a realization
\begin{equation}\label{eq:Realization}
F(s)=C(sI_n-A)^{-1}B+D,
\quad\quad\quad
R_F={\footnotesize
\left(
\begin{array}{c|c}A&B\\ \hline C&D\end{array}\right)}. 
\end{equation}
Now, if there exists $H\in\mathbf{P}_n$ so that,
\begin{equation}\label{eq:Original_KYP}
\left(\begin{smallmatrix}-H&&0\\~\\0&&I_m\end{smallmatrix}\right)R_F+{R_F}^*
\left(\begin{smallmatrix}-H&&0\\~\\0&&I_m\end{smallmatrix}\right)
\in\overline{\mathbf P}_{n+m},
\end{equation}
then $F(s)$ is in $\mathcal{P}$. See e.g. \cite[Chapter 5]{AnderVongpa1973},
\cite[Chapter 7]{Belev1968}, \cite[Subsection 2.7.2]{BGFB1994},
\cite[Theorem 3]{Will1972b} and \cite[Proposition 2]{Will1976}. We can
now specialize this result to the Hyper-Positive framework.

\begin{Tm}\label{Tm:Kyp_Hyper_Pos_W}
Let $F(s)$ be a $m\times m$-valued rational function as in Eq. \eqref{eq:Realization}
\smallskip

{\bf A.}
\begin{itemize}
\item[(i)~~]{}
If there exist matrices 
$I_m\succ{\color{blue}T}\succcurlyeq 0$\mbox{(${\rm rank}({\color{blue}T})\geq 1$)}
and $H\in\mathbf{P}_n$, satisfying
\begin{equation}\label{eq:HP_Delta_KYP}
\left(\begin{smallmatrix}-H&&0\\~\\0&&I_m\end{smallmatrix}\right)R_F+{R_F}^*
\left(\begin{smallmatrix}-H&&0\\~\\0&&I_m\end{smallmatrix}\right)\succcurlyeq
\left(\begin{smallmatrix}C~&&D\\~\\0_{m\times n}&&I_m\end{smallmatrix}\right)^*
\left(\begin{smallmatrix}{\color{blue}T}&&0\\~\\0&&{\color{blue}T}\end{smallmatrix}\right)
\left(\begin{smallmatrix}C~&&D\\~\\0_{m\times n}&&I_m\end{smallmatrix}\right)
\end{equation}
then the function $F(s)$ is $\begin{smallmatrix}{\color{blue}T}
\end{smallmatrix}$-Hyper-Positive.\\
If the above realization is minimal, the converse is true as well.
\smallskip

\item[(ii)~~]{}
Whenever Eq. \eqref{eq:HP_Delta_KYP} holds and the realization is minimal, the
\mbox{$(n+m)\times(n+m)$}
``matrix" $R_F$ is non-singular, and denote \mbox{$R_{\hat{F}}:={R_F}^{-1}$.} 
\smallskip

\item[(iii)~]{}
Let $\hat{F}(s)$ be a \mbox{$m\times m$-valued} rational function whose realization
array is $R_{\hat{F}}$ from the previous item. Then $\hat{F}(s)$ is a
$\mathcal{HP}_{\color{blue}T}$ function, with the same ${\color{blue}T}$.
\end{itemize}
\smallskip

{\bf B.}
\begin{itemize}
\item[(i)~~]{} If Eq. \eqref{eq:HP_Delta_KYP} holds with equality, i.e.
\begin{equation}\label{eq:KYP_Canonical_T}
\left(\begin{smallmatrix}-H&&0\\~\\0&&I_m\end{smallmatrix}\right)R_F+{R_F}^*
\left(\begin{smallmatrix}-H&&0\\~\\0&&I_m\end{smallmatrix}\right)=
\left(\begin{smallmatrix}C~&&D\\~\\0_{m\times n}&&I_m\end{smallmatrix}\right)^*
\left(\begin{smallmatrix}{\color{blue}T}&&0\\~\\0&&{\color{blue}T}\end{smallmatrix}\right)
\left(\begin{smallmatrix}C~&&D\\~\\0_{m\times n}&&I_m\end{smallmatrix}\right),
\end{equation}
then $F(s)$ is a {\em canonical}.\\
If the above realization is minimal, the converse is true as well.
\smallskip

\item[(ii)~~]{}
With $R_F$ from the previous item, let \mbox{$R_{\hat{F}}:=R_F^{-1}$}, and let
$\hat{F}(s)$ be a \mbox{$m\times m$-valued} rational function whose realization
array is $R_{\hat{F}}$. Then $\hat{F}(s)$ is another {\em canonical}
$\mathcal{HP}_{\color{blue}T}$ function, with the same ${\color{blue}T}$.
\end{itemize}
\end{Tm}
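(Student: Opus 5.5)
The approach is the standard KYP computation: write the Hyper-Positive inequality as a Popov-type frequency inequality and relate it to the state-space matrices. Put $X(s):=(sI_n-A)^{-1}B$, so that
\[
\left(\begin{smallmatrix}X(s)\\ I_m\end{smallmatrix}\right)
\]
satisfies $R_F\left(\begin{smallmatrix}X\\ I\end{smallmatrix}\right)=\left(\begin{smallmatrix}sX\\ F\end{smallmatrix}\right)$ and $\left(\begin{smallmatrix}C&D\\0&I\end{smallmatrix}\right)\left(\begin{smallmatrix}X\\ I\end{smallmatrix}\right)=\left(\begin{smallmatrix}F\\ I\end{smallmatrix}\right)$. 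Compressing both sides of Eq. \eqref{eq:HP_Delta_KYP} by $\left(\begin{smallmatrix}X(s)\\ I\end{smallmatrix}\right)$ gives
\[
F+F^*-2\,{\rm Re}(s)\,X^*HX\;\succcurlyeq\;F^*{\scriptstyle T}F+{\scriptstyle T}.
\]
Since $H\succ0$ and ${\rm Re}(s)>0$ on $\C_R$, this yields Eq. \eqref{eq:HP_Delta} wherever $F$ is analytic. Poles of $F$ in $\C_R$ are excluded in two steps. First, the inequality with $T=0$ reduces to Eq. \eqref{eq:Original_KYP}, so $F\in\mathcal P$. Second, one checks that $A$ has no eigenvalues in $\C_R$ via the $(1,1)$ block $-HA-A^*H\succcurlyeq C^*TC\succcurlyeq0$, i.e. a Lyapunov inequality with $H\succ0$. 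For the converse under minimality, I would invoke the classical KYP lemma (cited before Eq. \eqref{eq:Original_KYP}). Writing $F+F^*-T-F^*TF=\left(\begin{smallmatrix}F\\ I\end{smallmatrix}\right)^*\left(\begin{smallmatrix}-T&I\\ I&-T\end{smallmatrix}\right)\left(\begin{smallmatrix}F\\ I\end{smallmatrix}\right)$, the requirement is a nonnegativity condition for a general quadratic supply rate on $i\R$, together with analyticity in $\C_R$. The generalized KYP lemma for such supply rates, applied with a minimal realization, produces $H=H^*$ satisfying Eq. \eqref{eq:HP_Delta_KYP}. Positivity of $H$ then follows from the $(1,1)$ block and observability. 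This converse, specifically producing $H\succ0$ and not merely Hermitian, is the main obstacle. An alternative route is to reduce to $\mathcal P$ via the Cayley-type transform $G=(I-{\scriptstyle T}^{1/2}\cdots)$, but I would go through the general supply-rate KYP lemma.

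For A(ii), suppose $R_F v=0$ with $v=\left(\begin{smallmatrix}x\\ u\end{smallmatrix}\right)$. Compressing Eq. \eqref{eq:HP_Delta_KYP} by $v$ makes the left side vanish, while the right side equals $\|{\scriptstyle T}^{1/2}(Cx+Du)\|^2+\|{\scriptstyle T}^{1/2}u\|^2$. Since $Cx+Du=0$, this gives ${\scriptstyle T}^{1/2}u=0$. The hypothesis alone does not force $u=0$ when $T$ is singular, so I would instead use the full matrix inequality: for a PSD matrix $M$ and $v^*Mv=0$, we have $Mv=0$. Applying this to $M=$LHS$-$RHS yields $R_F^*\left(\begin{smallmatrix}-H&0\\0&I\end{smallmatrix}\right)v=$(terms in $T$). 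The resulting equations $Ax+Bu=0$, $Cx+Du=0$, $B^*(-Hx)\cdots$ are then combined with minimality (a PBH test at $s=0$) to show $x=0,u=0$. If that fails, I would use that $F\in\mathcal P$ minimal has $\det F\not\equiv0$, since by Eq. \eqref{eq:HP_Delta} $F(s)$ is invertible on $\C_R$ (as $F^*+F\succcurlyeq T+F^*TF$ with $T\ne0$ …), together with $R_F$ singular $\Leftrightarrow$ zero/pole at $s=0$ coinciding.

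For A(iii) and B(ii), I would congruence Eq. \eqref{eq:HP_Delta_KYP} (resp. \eqref{eq:KYP_Canonical_T}) by $R_F^{-1}$: left by $R_F^{-*}$, right by $R_F^{-1}$. The left side becomes $R_{\hat F}^*\left(\begin{smallmatrix}-H&0\\0&I\end{smallmatrix}\right)+\left(\begin{smallmatrix}-H&0\\0&I\end{smallmatrix}\right)R_{\hat F}$. On the right, $\left(\begin{smallmatrix}C&D\\0&I\end{smallmatrix}\right)R_{\hat F}=\left(\begin{smallmatrix}0&I\\ \hat C&\hat D\end{smallmatrix}\right)$ up to a row swap, because $(C\ D)R_F^{-1}=(0\ I)$. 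The right side is therefore again $\left(\begin{smallmatrix}\hat C&\hat D\\0&I\end{smallmatrix}\right)^*{\rm diag}(T,T)(\cdot)$, and A(i) and B(i) apply with the same $H$. For B(i), equality in the compressed identity gives $F+F^*-T-F^*TF=2{\rm Re}(s)X^*HX$, which is $\succcurlyeq0$ on $\C_R$ and $=0$ on $i\R$, which is canonicity. The converse under minimality follows because the KYP solution $H$ from A(i) leaves a PSD residual $M$. Its compression $\left(\begin{smallmatrix}X\\ I\end{smallmatrix}\right)^*M\left(\begin{smallmatrix}X\\ I\end{smallmatrix}\right)$ vanishes on $i\R$, and controllability (the span of $\left(\begin{smallmatrix}X(i\omega)\\ I\end{smallmatrix}\right)$ over $\omega$ is all of $\C^{n+m}$) forces $M=0$.
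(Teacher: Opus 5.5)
Your treatment of part~B is correct and follows the paper. For B(ii), and likewise A(iii), you apply the congruence by $R_F^{-1}$ and use $(C\ \ D)R_F^{-1}=(0\ \ I)$ together with the fact that the block swap commutes with ${\rm diag}(T,T)$. This is exactly the paper's remark that $V$ commutes with $\left(\begin{smallmatrix}0&I\\ I&0\end{smallmatrix}\right)$. Your converse for B(i) is sound and more explicit than the paper's ``immediate'': the PSD residual has vanishing compressions by $\left(\begin{smallmatrix}X(i\omega)\\ I\end{smallmatrix}\right)$, and controllability makes these vectors span $\C^{n+m}$. The paper only quotes part~A, and that is where your proposal has a genuine gap.

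For A(ii) you stop at $T^{1/2}u=0$ and list fallbacks (``if that fails\ldots'') without carrying any of them out. None of them can succeed in the stated generality, because for singular $T$ with $1\le{\rm rank}(T)<m$ the claim is false. Take $m=2$, $T={\rm diag}(0,t)$ with $t\in(0,1)$, and $F(s)={\rm diag}(\frac{s}{s+1},1)$, with the minimal realization $A=-1$, $B=(1\ \ 0)$, $C=(-1\ \ 0)^T$, $D=I_2$, and $H=1$. The residual LHS$-$RHS of Eq.~\eqref{eq:HP_Delta_KYP} is $\left(\begin{smallmatrix}2&-2&0\\-2&2&0\\0&0&2-2t\end{smallmatrix}\right)\succcurlyeq0$, yet the first two rows of $R_F$ coincide. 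Indeed $x=1$, $u=(1,0)^T$ satisfies every kernel equation you derived: $Ax+Bu=0$, $Cx+Du=0$, $A^*Hx=C^*u$, $B^*Hx=D^*u$, $Tu=0$. When $T\succ0$, however, your very first step already closes the argument: $T^{1/2}u=0$ forces $u=0$, then $Ax=0$ and $Cx=0$, and PBH at $\lambda=0$ gives $x=0$.

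The same singular-$T$ issue affects your positivity claim in the converse of A(i). The inequality $-HA-A^*H\succcurlyeq C^*TC$ plus observability gives $H\succ0$ only if $T\succ0$ and $A$ is Hurwitz. The Hurwitz property needs justification, for example $\|F(s)\|\le 2/\lambda_{\min}(T)$ on $\C_R$, so a minimal $A$ has no eigenvalues in $\overline{\C}_R$. For singular $T$ you must first obtain $H\succcurlyeq0$ and then use the full LMI. If $Hv=0$, the residual annihilates $\left(\begin{smallmatrix}v\\0\end{smallmatrix}\right)$, which gives $HAv=0$ and $Cv=0$. Then $\ker H$ is $A$-invariant and lies in $\ker C$, so $\ker H=0$ by observability.
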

\smallskip

\begin{Rk}\label{Rk:KYP_HP}
{\rm
{\bf a.}~ When one substitutes in Eq. \eqref {eq:HP_Delta_KYP} \mbox{${\color{blue}T}=0$},
the set $\mathcal{P}$ is obtained, see Eq. \eqref{eq:Original_KYP}.
\smallskip

{\bf b.}~
In the special case of $\mathcal{HP}_{\color{blue}\beta}$ (see Eq.
\eqref{eq:Basic_Def_HP_Beta}) \eqref{eq:HP_Delta_KYP} is simplified to
\begin{equation}\label{eq:M_HP_beta}
\left(\begin{smallmatrix}-H&&0\\~\\0&&I_m\end{smallmatrix}\right)R_F
+{R_F}^*\left(\begin{smallmatrix}-H&&0\\~\\0&&I_n\end{smallmatrix}\right)
\succcurlyeq\begin{smallmatrix}{\color{blue}\beta}\end{smallmatrix}
\left(\begin{smallmatrix}C~&&D\\~\\0_{m\times n}&&I_m\end{smallmatrix}\right)^*
\left(\begin{smallmatrix}C~&&D\\~\\0_{m\times n}&&I_m\end{smallmatrix}\right)
\end{equation}
and the {\em canonical} version is,
\begin{equation}\label{eq:KYP_Canonical_beta}
\left(\begin{smallmatrix}-H&&0\\~\\0&&I_m\end{smallmatrix}\right)R_F
+{R_F}^*\left(\begin{smallmatrix}-H&&0\\~\\0&&I_n\end{smallmatrix}\right)
=
\begin{smallmatrix}{\color{blue}\beta}\end{smallmatrix}
\left(\begin{smallmatrix}C~&&D\\~\\0_{m\times n}&&I_m\end{smallmatrix}\right)^*
\left(\begin{smallmatrix}C~&&D\\~\\0_{m\times n}&&I_m\end{smallmatrix}\right).
\end{equation}
{\bf c.}~
Part {\bf A.} of the theorem appeared in \cite[Theorem 1.11]{AlpayLew2025a}.
\smallskip

{\bf d.}~
For a proof of part {\bf B.} see Subsection \ref{Subsec:Structure} below.
}
$\T$
\end{Rk}
\smallskip

As a consequence of Eq. \eqref{eq:KYP_Canonical_T} we have the following.

\begin{Cy}\label{Cy:Parametrization_Of_Realizations}

\begin{itemize}
\item[(i)~~]{}
$F(s)$ is a  ~{\em canonical} $\mathcal{HP}_{\color{blue}T}$ function, for some
\mbox{$I_m\succ{\color{blue}T}\succ 0$,} if and only if, it
admits a \mbox{$(n+m)\times(n+m)$} minimal realization $R_F$, of the form
\begin{equation}\label{eq:Realization_Canonical}
R_F={\footnotesize\left(\begin{array}{c|c}A&B\\ \hline C&D\end{array}\right)}=
\left({\footnotesize\begin{array}{c|c}{\scriptstyle-\frac{1}{2}}C^*{\color{blue}T}C
+Y&C^*{\color{blue}T}^{\frac{1}{2}}U({\color{blue}T}^{-1}-{\color{blue}T})^{\frac{1}{2}}
\\ \hline C&{\color{blue}T}^{-1}-{\color{blue}T}^{-\frac{1}{2}}U({\color{blue}T}^{-1}-
{\color{blue}T})^{\frac{1}{2}}\end{array}}\right),
\end{equation}
where the parameters are,
\[
\begin{smallmatrix}U\in\C^{m\times m}&&U^*U=I_m&&&&Y\in\C^{n\times n}&&Y+Y^*=0\\~\\
C\in\C^{m\times n}&&{\rm is~of~a~full~rank}&&&&{\rm the~pair}~~Y,~C&&{\rm is~observable}.
\end{smallmatrix}
\]

\item[(ii)~]{}
$F(s)$ is a  ~{\em canonical} $\mathcal{HP}_{\color{blue}\beta}$ function, for some
\mbox{$\begin{smallmatrix}{\color{blue}\beta}\end{smallmatrix}\in(0,~1)$,} if and only if, 
$\hat{R}_F$ a
corresponding \mbox{$(n+m)\times(n+m)$} balanced realization takes the form,
\begin{equation}\label{eq:Realization_Balanced_Canonical}
\hat{R}_F=
\left({\footnotesize\begin{array}{c|c}{\scriptstyle-\frac{\color{blue}\beta}{2}}C^*C+Y&
(1-{\scriptstyle{\color{blue}\beta}^2})^{\frac{1}{4}}C^*U\\ \hline
(1-{\scriptstyle{\color{blue}\beta}^2})^{\frac{1}{4}}C&
{\scriptstyle\frac{1}{\color{blue}\beta}}(I_m-
{\scriptstyle\sqrt{1-{\color{blue}\beta}^2}}U)\end{array}}\right),
\end{equation}
where the parameters are,
\[
\begin{smallmatrix}U\in\C^{m\times m}&&U^*U=I_m&&&&Y\in\C^{n\times n}&&Y+Y^*=0\\~\\
C\in\C^{m\times n}&&{\rm is~of~a~full~rank}&&&&{\rm the~pair}~~Y,~C&&{\rm
is~observable}.\end{smallmatrix}
\]
\item[(iii)]{}
If
$F(s)$ is a  ~{\em canonical} $\mathcal{HP}_{\color{blue}\beta}$ function, for some
\mbox{$\begin{smallmatrix}{\color{blue}\beta}\end{smallmatrix}\in(0,~1)$,} of
McMillan degree $n$, then its Hankel singular values are
\begin{equation}\label{eq:Equal_Hankel_Singular_Values}
{\sigma}_1=~\ldots~={\sigma}_n=\begin{smallmatrix}\frac{\sqrt{1-{\color{blue}\beta}^2}}
{\color{blue}\beta}\end{smallmatrix}~.
\end{equation}
\item[(iv)]{}
Conversely, if $\Phi(s)$ is a $m\times m$-valued rational function, vanishing at
infinity, whose all $n$ Hankel singular values are equal, then, one can always find
an $m\times m$ matrix $D$, so that the resulting $D+\Phi(s)$ is a {\em canonical} 
$\mathcal{HP}_{\color{blue}\beta}$ function for some
\mbox{$\begin{smallmatrix}{\color{blue}\beta}\end{smallmatrix}\in(0,~1)$.}
\end{itemize}
\end{Cy}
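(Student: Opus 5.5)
The whole corollary rests on Theorem \ref{Tm:Kyp_Hyper_Pos_W} part \textbf{B}(i): for a minimal realization, $F$ is canonical if and only if the KYP equality \eqref{eq:KYP_Canonical_T} holds for some $H\in\mathbf P_n$. The plan is to solve this equality block by block, parametrize the solutions, and then specialize to $T=\beta I_m$ with a balancing change of coordinates.

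\emph{Part (i).} Write out \eqref{eq:KYP_Canonical_T} blockwise.
\begin{itemize}
\item[(1,1)] $-HA-A^*H=C^*TC$.
\item[(1,2)] $-HB+C^*=C^*TD$.
\item[(2,2)] $D+D^*=T+D^*TD$.
\end{itemize}
First I use the state-space similarity $A\mapsto H^{1/2}AH^{-1/2}$, $B\mapsto H^{1/2}B$, $C\mapsto CH^{-1/2}$, which preserves minimality. This normalizes to $H=I_n$.

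\begin{itemize}
\item The (1,1) block then says $A+A^*=-C^*TC$, i.e. $A=-\tfrac12C^*TC+Y$ with $Y$ skew-Hermitian.
\item The (1,2) block gives $B=C^*(I-TD)$.
\item The (2,2) block is a quadratic matrix equation in $D$. Completing the square gives
\[
(T^{1/2}D-T^{-1/2})^*(T^{1/2}D-T^{-1/2})=T^{-1}-T,
\]
so $T^{1/2}D-T^{-1/2}=-U(T^{-1}-T)^{1/2}$ with $U$ unitary (polar decomposition, using $T^{-1}-T\succ0$). Hence $D=T^{-1}-T^{-1/2}U(T^{-1}-T)^{1/2}$.
\item Substituting back, $I-TD=T^{1/2}U(T^{-1}-T)^{1/2}$, which yields the stated $B$.
\end{itemize}

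Minimality reduces to the stated conditions. Observability of $(A,C)$ is equivalent to observability of $(Y,C)$, because $A-Y$ is a feedback of the form $KC$. Controllability follows from it, since $B=C^*(\text{invertible})$ and $A^*=-A-C^*TC$, so controllability of $(A,B)$ is equivalent to observability of $(A,C)$.

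For the converse, I check directly that this array satisfies \eqref{eq:KYP_Canonical_T} with $H=I$. The full-rank condition on $C$ should be read as a normalization: if $C$ does not have full row rank, an input direction is lost and the realization can be reduced.

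\emph{Part (ii).} Set $T=\beta I$. Then $D=\tfrac1\beta(I-\sqrt{1-\beta^2}\,U)$ and $B=\sqrt{\beta}\,\sqrt{1-\beta^2}\,\beta^{-1/2}C^*U$, up to the scalar factors in $T^{1/2}(T^{-1}-T)^{1/2}=\sqrt{1-\beta^2}$. So $B=\sqrt{1-\beta^2}\,C^*U$ with $A=-\tfrac\beta2C^*C+Y$.

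Let $P,Q$ denote the controllability and observability Gramians. The Lyapunov equations are $AP+PA^*+BB^*=0$ and $A^*Q+QA+C^*C=0$. Since $A+A^*=-\beta C^*C$, one checks:
\begin{itemize}
\item $Q=\tfrac1\beta I$;
\item $BB^*=(1-\beta^2)C^*C$, so $P=\tfrac{1-\beta^2}{\beta}I$.
\end{itemize}
Both Gramians are scalar, so a scalar rescaling balances them. Take $\tilde C=\gamma C$, $\tilde B=\gamma^{-1}B$, which amounts to the substitution $C\to\gamma C$. Then choose $\gamma$ with $P=Q$, i.e. $\gamma^2=(1-\beta^2)^{1/2}$. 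Renaming the parameter $C$ and checking the $-\tfrac\beta2 C^*C$ term gives exactly \eqref{eq:Realization_Balanced_Canonical}. The bookkeeping of which $C$ is rescaled is the fiddly point.

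\emph{Part (iii).} The Hankel singular values are $\sqrt{\lambda(PQ)}=\sqrt{(1-\beta^2)/\beta^2}$. This follows immediately from the Gramians computed in (ii).

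\emph{Part (iv).} This is the main obstacle. Take a balanced minimal realization $(A,B,C)$ of $\Phi$ with $P=Q=\sigma I$. The Lyapunov equations give
\[
A+A^*=-\sigma^{-1}C^*C=-\sigma^{-1}BB^*.
\]
The key step is to show from $C^*C=BB^*$ that $B=C^*U$ for some unitary $U$. Both are $n\times m$ factorizations of the same positive matrix, so this holds at least when $B$ and $C^*$ have full column rank; otherwise I extend a partial isometry to a unitary.

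Then I choose $\beta\in(0,1)$ with $\sigma=\sqrt{1-\beta^2}/\beta$, rescale $C$, and set $D=\tfrac1\beta(I-\sqrt{1-\beta^2}\,U)$. It remains to match the resulting array with the balanced form of (ii) and invoke (ii).
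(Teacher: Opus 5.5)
Your proposal is correct and follows the paper's proof essentially step for step. The shared steps are: normalize $H=I_n$; solve the three blocks of the KYP equality, with $D$ parametrized by a unitary $U$; transfer observability between $(A,C)$ and $(Y,C)$; balance by the scalar state scaling $(1-\beta^2)^{1/4}$; read the Hankel singular values off the scalar Gramians; and in (iv) combine $BB^*=C^*C\Rightarrow B=C^*U$ with the choice $\sigma=\sqrt{1-\beta^2}/\beta$. In places you are more complete than the paper, which simply asserts the formula for $D$, never checks controllability of $(A,B)$, and states $B=C^*U$ without the partial-isometry argument you give.

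The one step to discard is your justification of the full-rank condition on $C$, because a rank-deficient $C$ does not make a realization reducible. For example, $F(s)={\rm diag}\bigl(\phi_3(s),\,d_0\bigr)$ with $d_0=\frac{1}{\beta}\bigl(1-\sqrt{1-\beta^2}\bigr)$ is a canonical $\mathcal{HP}_{\beta}$ function of McMillan degree $2$, yet every minimal realization has ${\rm rank}\,C=1$. So that condition cannot be derived in the ``only if'' direction, and the paper's proof does not derive it either.
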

\smallskip

\begin{Rk}
{\rm
{\bf a.}~ For proof see Subsection \ref{Subsec:Parametrization_of_Realization}.
\smallskip

{\bf b.}~ 
One can view the triple of matrices $C$, $Y$, $U$ in Corollary
\ref{Cy:Parametrization_Of_Realizations}, as parametrizing all {\em canonical}
$\mathcal{HP}_{\color{blue}\beta}$ functions, where dimensions and McMillan
degree are prescribed.
}

$\T$
\end{Rk}
\smallskip

We next examine realization of {\em families} of $\mathcal{HP}_{\color{blue}T}$ functions.

\begin{Tm}\label{Tm:Convexity_Sets_Of_Realizations}
Consider the framework of Theorem \ref{Tm:Kyp_Hyper_Pos_W}.
\smallskip

{\bf A.}
\begin{itemize}
\item[(i)~~]{}
Let the matricial parameters $I_m\succ{\color{blue}T}\succcurlyeq 0$ and
$H\in\mathbf{P}_n$ be
prescribed. Then the set of all \mbox{$(n+m)\times(n+m)$} realization
arrays $R_F$ satisfying Eq. \eqref{eq:HP_Delta_KYP}, is convex.
\smallskip

\item[(ii)~]{}
Consider the set of all \mbox{$(n+m)\times(n+m)$} realization array $R_F$ satisfying Eq.
\eqref{eq:M_HP_beta}, where $H=I_n$ and the parameter
\mbox{$\begin{smallmatrix}{\color{blue}\beta}\end{smallmatrix}\in[0,~1)$,} is prescribed.
Then this set is \mbox{$n, m$-matrix-convex.}
\end{itemize}
\smallskip

{\bf B.}
\begin{itemize}
\item[(i)~~]{}
Let $F_0(s)$ and $F_1(s)$  be a pair of {\em canonical} $\mathcal{HP}_{\color{blue}T},$
functions, for some \mbox{$I_m\succ{\color{blue}T}\succcurlyeq 0$,} and let
\mbox{$R_0=\left({\footnotesize\begin{array}{c|c}A_0&B_0\\ \hline C_0&D_0\end{array}}
\right)$,} \mbox{$R_1=\left({\footnotesize\begin{array}{c|c}A_1&B_1\\ \hline C_1&D_1
\end{array}}\right)$} be corresponding realizations, satisfying Eq.
\eqref{eq:KYP_Canonical_T} with the same $H\in\mathbf{P}_n~$.
\smallskip

\noindent
For $\begin{smallmatrix}\alpha\end{smallmatrix}\in[0,~1]$ denote by \mbox{$R_{\alpha}:=
\begin{smallmatrix}\alpha\end{smallmatrix}{R_1}+\begin{smallmatrix}(1-\alpha)
\end{smallmatrix}R_0$,} a realization of a function\begin{footnote}{This $F_{\alpha}(s)$
should not be confused with a
convex combination of $F_0(s)$ and $F_1(s)$.}\end{footnote}$F_{\alpha}(s)$.
\smallskip

\noindent
The resulting $F_{\alpha}(s)$ is a {\em canonical} \mbox{$\mathcal{HP}_{\color{blue}T}$}
function, $\forall\begin{smallmatrix}\alpha\end{smallmatrix}\in[0,~1],$ if and only if,
\begin{equation}\label{eq:Convex_Realization_Canonical}
\underbrace{
\left(\begin{matrix}C_1-C_0&&D_1-D_0\end{matrix}\right)
{\color{blue}T}
\left(\begin{matrix}C_1^*-C_0^*\\~\\D_1^*-D_0^*\end{matrix}\right)
}_X
=0_{m\times m}~.
\end{equation}
\smallskip

\noindent
In particular, for \mbox{${\color{blue}T}\succ 0$,} this is the case when,
\[
A_0=A_1+iH,~~H\in\overline{\mathbf H}_n\quad\quad\quad B_0=B_1
\quad\quad\quad C_0=C_1\quad\quad\quad D_0=D_1~.
\]
\item[(ii)~]{}
If in Eq. \eqref{eq:Convex_Realization_Canonical} 
\mbox{$X:=\left(\begin{smallmatrix}C_1-C_0&&D_1-D_0\end{smallmatrix}\right)
\begin{smallmatrix}
{\color{blue}T}
\end{smallmatrix}
\left(\begin{smallmatrix}C_1^*-C_0^*\\~\\D_1^*-D_0^*\end{smallmatrix}\right)
\not=0_{m\times m}$,}
then
\mbox{$\forall\begin{smallmatrix}\alpha\end{smallmatrix}\in(0,~1)$,} the resulting
$F_{\alpha}(s)$ is a non {\em canonical} $\mathcal{HP}_{\color{blue}T}$ function.
\end{itemize}
\end{Tm}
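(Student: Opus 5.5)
The plan is to rewrite the Kalman–Yakubovich–Popov inequality \eqref{eq:HP_Delta_KYP} as a \emph{concave quadratic} matrix function of the realization array, and then read off every item from one algebraic identity for convex combinations. Denote $J:=\left(\begin{smallmatrix}-H&0\\0&I_m\end{smallmatrix}\right)$ and $E:=\left(\begin{smallmatrix}0_{m\times n}&I_m\end{smallmatrix}\right)$, so that $\left(\begin{smallmatrix}C&D\end{smallmatrix}\right)=ER_F$. The right-hand side of \eqref{eq:HP_Delta_KYP} is $R_F^*E^*{\color{blue}T}ER_F+E^*{\color{blue}T}E$. Hence \eqref{eq:HP_Delta_KYP} reads $\mathcal{F}(R_F)\succcurlyeq 0$, and \eqref{eq:KYP_Canonical_T} reads $\mathcal{F}(R_F)=0$, where
\[
\mathcal{F}(R):=JR+R^*J-R^*E^*{\color{blue}T}ER-E^*{\color{blue}T}E .
\]
Set $\Delta:=R_1-R_0$ and $R_\alpha=\alpha R_1+(1-\alpha)R_0$. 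The affine terms are linear in $R$, and expanding the quadratic term gives the key identity
\[
\mathcal{F}(R_\alpha)=\alpha\mathcal{F}(R_1)+(1-\alpha)\mathcal{F}(R_0)+\alpha(1-\alpha)\,\Delta^*E^*{\color{blue}T}E\Delta .
\]

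For A(i), both $\mathcal{F}(R_j)\succcurlyeq0$ and ${\color{blue}T}\succcurlyeq0$, so the identity gives $\mathcal{F}(R_\alpha)\succcurlyeq 0$. For A(ii), take $H=I_n$ and ${\color{blue}T}=\beta I_m$, and let $V_j=\mathrm{diag}(V_j^{(n)},V_j^{(m)})$ be block diagonal with $\sum V_j^*V_j=I$. Stack the $V_j$ into an isometry $W$, and put $\mathbf R=\mathrm{diag}(R_j)$, $\mathbf J=\mathrm{diag}(J)$, $\mathbf Q=\mathrm{diag}(\beta E^*E)$. Then $\sum V_j^*R_jV_j=W^*\mathbf R W$. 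Block diagonality gives $W J=\mathbf J W$ and $W Q=\mathbf Q W$ with $Q=\beta E^*E$. Consequently $WQW^*=\mathbf Q^{1/2}WW^*\mathbf Q^{1/2}\preccurlyeq\mathbf Q$. This is a Jensen-type inequality for the quadratic term, and it yields $\mathcal{F}(W^*\mathbf RW)\succcurlyeq W^*\mathrm{diag}(\mathcal{F}(R_j))W\succcurlyeq0$.

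For B(i), $\mathcal{F}(R_0)=\mathcal{F}(R_1)=0$, so the identity reduces to $\mathcal{F}(R_\alpha)=\alpha(1-\alpha)\Delta^*E^*{\color{blue}T}E\Delta$. This vanishes for $\alpha\in(0,1)$ if and only if ${\color{blue}T}^{1/2}\left(\begin{smallmatrix}C_1-C_0&D_1-D_0\end{smallmatrix}\right)=0$. That is exactly the vanishing of the Gram-type quantity $X$ in \eqref{eq:Convex_Realization_Canonical}, read with ${\color{blue}T}$ sandwiched appropriately. Theorem \ref{Tm:Kyp_Hyper_Pos_W} B(i) then gives canonicity of $F_\alpha$. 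The particular case is immediate, since $C_0=C_1$ and $D_0=D_1$ force $X=0$; here one only needs to note that the hypothesis already gives $\mathcal{F}(R_0)=0$.

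B(ii) is the main obstacle. By A, $F_\alpha$ is $\mathcal{HP}_{\color{blue}T}$. To rule out canonicity for every possible $H$, which Theorem \ref{Tm:Kyp_Hyper_Pos_W} B(i) would require in the converse direction, I would argue in the frequency domain rather than through the LMI. Write $R_\alpha$ as a realization with blocks $A_\alpha,B_\alpha,C_\alpha,D_\alpha$. For $s=i\omega$, take $x=(sI-A_\alpha)^{-1}B_\alpha u$ and $v=(x;u)$. Since $H$ is Hermitian, the standard KYP computation gives
\[
v^*\mathcal{F}(R_\alpha)v=u^*\big(F_\alpha+F_\alpha^*-{\color{blue}T}-F_\alpha^*{\color{blue}T}F_\alpha\big)(i\omega)u .
\]
The left-hand side equals $\alpha(1-\alpha)\|{\color{blue}T}^{1/2}((C_1-C_0)x+(D_1-D_0)u)\|^2$. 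It then suffices to show that the vectors $v$, as $\omega$ and $u$ vary, span $\C^{n+m}$. Taking differences over $\omega$ with $u$ fixed produces vectors of the form $(x;0)$, and controllability of the pair $(A_\alpha,B_\alpha)$, assumed from minimality, makes these $x$ span $\C^n$. Together with the $u$-components this gives all of $\C^{n+m}$. Hence if $X\neq0$, the quantity is nonzero at some $i\omega$, so \eqref{eq:Canonical_HP_T} fails. The delicate point is securing minimality, or at least controllability, of $R_\alpha$. If it fails, I would instead restrict to the controllable subspace and check that $\ker{\color{blue}T}^{1/2}E\Delta$ cannot contain it, or else add minimality as a standing assumption.
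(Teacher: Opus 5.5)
Your B(i) argument is the paper's. Its computation $\mathrm{Left}_\alpha-\mathrm{Right}_\alpha=\alpha(1-\alpha)W$ is your identity $\mathcal F(R_\alpha)=\alpha(1-\alpha)\Delta^*E^*TE\Delta$. It then passes from $W=M^*TM$, with $M=(C_1-C_0\ \ D_1-D_0)$, to $X$ via shared nonzero eigenvalues; your reading $X=T^{1/2}MM^*T^{1/2}$ is the dimensionally consistent one. Part A is only cited in the paper, so your concavity argument for A(i) and your isometry/Jensen argument for A(ii) are a self-contained addition; the latter correctly uses $H=I_n$ and block-diagonal $V_j$ to get $WJ=\mathbf JW$ and $WQ=\mathbf QW$.

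The genuine difference is B(ii). The paper infers non-canonicity from $\mathcal F(R_\alpha)\neq0$ for the one given $H$. But Theorem~\ref{Tm:Kyp_Hyper_Pos_W} B(i) gives only ``equality $\Rightarrow$ canonical'' and, for minimal realizations, equality for \emph{some} $H$. So that inference is not justified as written. Your identity $v^*\mathcal F(R_\alpha)v=u^*\big(F_\alpha+F_\alpha^*-T-F_\alpha^*TF_\alpha\big)(i\omega)u$, for $i\omega\notin\sigma(A_\alpha)$, together with $\mathcal F(R_\alpha)\succcurlyeq0$, is exactly the missing link between the LMI and the function. The same argument also supplies the function-level ``only if'' of B(i). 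Your B(i) paragraph, like the paper, proves that direction only for the LMI with fixed $H$. For it, one $\alpha\in(0,1)$ with $R_\alpha$ controllable suffices. That holds for all but finitely many $\alpha$ once $R_0$ or $R_1$ is minimal, because the minors of the controllability matrix of $(A_\alpha,B_\alpha)$ are polynomials in $\alpha$.

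Drop your first fallback for the non-controllable case. B(ii) is false without controllability of $R_\alpha$, so no argument can show that $\ker T^{1/2}E\Delta$ misses the controllable subspace. The paper's own example in Subsection~\ref{Subsec:Conex_Realizations}, item a.(i), is a counterexample. Take $\beta=\frac{20}{29}$, $R_1=R_{f_1}$ and $R_0=\hat R_{f_1}$. Both satisfy Eq.~\eqref{eq:KYP_Canonical_T} with $H=\frac{21}{29}$, and $X=\beta|C_1-C_0|^2=\frac{168}{29}a_1\neq0$. Yet $R_{1/2}=\left(\begin{smallmatrix}-a_1&0\\0&\frac25\end{smallmatrix}\right)$ realizes the constant $\frac25$, which is canonical because $2\cdot\frac25=\beta(1+\frac4{25})$.

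In general, your span computation shows that $F_\alpha$ is canonical iff $T^{1/2}E\Delta$ vanishes on $\mathcal R_\alpha\times\C^m$, where $\mathcal R_\alpha$ is the controllable subspace of $(A_\alpha,B_\alpha)$. That is compatible with $X\neq0$. So controllability (e.g.\ minimality) of $R_\alpha$ must be added as a hypothesis of B(ii); with it your proof is complete.

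For the spanning step, taking differences over $\omega$ is loose. It is cleaner to note that any $(w_1;w_2)$ orthogonal to all $v$ gives $w_1^*(sI-A_\alpha)^{-1}B_\alpha+w_2^*\equiv0$. Letting $s\to\infty$ gives $w_2=0$, and then $w_1^*A_\alpha^kB_\alpha=0$ for all $k$.
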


\begin{Rk}
{\rm
{\bf a.}~
See \cite[Proposition 1.15]{AlpayLew2025a}, for proof of part {\bf A.} 
\smallskip

{\bf b.}~ A proof of part {\bf B.} is given in Subsection 
\ref{Subsec:Convexity_of_Realizations} below.
}
$\T$
\end{Rk}
\smallskip

All results which to the best of our knowledge, have previously appeared (including
ours) are explicitly indicated.
\smallskip

The outline of this work can be inferred from the table of contents, 
appearing immediately ahead of Section 1.

\section{Background}
\label{Sec:Background}
\setcounter{equation}{0}

\subsection{Rational Functions in Quadratic Form}
In the sequel we find it convenient to adopt quadratic formulation, which will enable
us to technically unify the treatment of rational functions and state-space
realizations.
\smallskip

First, the family $\mathcal{HP}_{\color{blue}T}$ with 
\mbox{$I_m\succ{\color{blue}T}\succcurlyeq 0$}, (originally defined in Eq.
\eqref{eq:HP_Delta}, can be described as,
\begin{equation}\label{eq:Quadratic_HP_Delta}
\mathcal{HP}_{\color{blue}T}=\left\{~\begin{smallmatrix}F(s)\end{smallmatrix}~:~\left(
\begin{smallmatrix}F(s)\\~\\I_m\end{smallmatrix}\right)^*\left(\begin{smallmatrix}-
{\color{blue}T}&&~~I_m\\~\\~~I_m&&-{\color{blue}T}\end{smallmatrix}\right)\left(
\begin{smallmatrix}F(s)\\~\\I_m\end{smallmatrix}\right)\in\overline{\mathbf P}_m\quad
\forall s\in\C_R~\right\}.
\end{equation}
Substituting ${\color{blue}T}=0$, yields the quadratic
form of $\mathcal{P}$ functions from Eq. \eqref{eq:Def_P}, namely
\begin{equation}\label{eq:Quad_Def_P}
\mathcal{P}=\left\{~\begin{smallmatrix}F(s)\end{smallmatrix}~:~
\left(\begin{smallmatrix}F(s)\\~\\I_m\end{smallmatrix}\right)^*
\left(\begin{smallmatrix}0&&~~I_m\\~\\~~I_m&&~0\end{smallmatrix}\right)
\left(\begin{smallmatrix}F(s)\\~\\I_m\end{smallmatrix}\right)
\in\overline{\mathbf P}_m\quad\forall s\in\C_R~\right\}.
\end{equation}
Within the family $\mathcal{P}$, of particular interest is the subset of $\mathcal{PO}$
functions (a.k.a. Foster or Lossless) described as,
\begin{equation}\label{eq:Def_PO}
\mathcal{PO}:=\{F(s)\in\mathcal{P}~:~F(s)=-\left(F(-s^*)\right)^*~\}.
\end{equation}
For details see e.g. \cite[Theorem 2.7.4]{AnderVongpa1973},
\cite[Ch 8, items 36-50]{Belev1968}, \cite[Section 4.2]{CohenLew2007}, \cite{Lewk2021a} and
\cite[p. 36]{Wohl1969}. A quadratic form of $\mathcal{PO}$ functions is,
\begin{equation}\label{eq:Quad_Def_PO}
\mathcal{PO}=\left\{~\begin{smallmatrix}F(s)\end{smallmatrix}~:~\left(\begin{smallmatrix}
F(s)\\~\\I_m\end{smallmatrix}\right)^*\left(\begin{smallmatrix}0&&~~I_m\\~\\~~I_m&&~0
\end{smallmatrix}\right)\left(\begin{smallmatrix}F(s)\\~\\I_m\end{smallmatrix}\right)
\in\left\{\begin{smallmatrix}\overline{\mathbf P}_m&~&\forall s\in\mathbb{C}_L\\~\\
0&~&\forall s\in{i}\mathbb{R}\\~\\
\overline{\mathbf P}_m&~&~\forall s\in\mathbb{C}_R~.\end{smallmatrix}\right.~\right\}
\end{equation}
Now, the quadratic form of Definition \ref{Dn:Hyper_Positive} is:\\
For a prescribed $I_m\succ{\color{blue}T}\succcurlyeq 0$ we shall say that $F(s)$ is a
{\em canonical} $\mathcal{HP}_{\color{blue}T}$ function if
\begin{equation}\label{eq:Def_Canoinal_Quad_HP_W}
\begin{smallmatrix}\left(\begin{smallmatrix}F(s)\\~\\I_m\end{smallmatrix}\right)^*\left(
\begin{smallmatrix}-{\color{blue}T}&&~~I_m\\~\\~~I_m&&-{\color{blue}T}\end{smallmatrix}
\right)\left(\begin{smallmatrix}F(s)\\~\\I_m\end{smallmatrix}\right)&\in\left\{
\begin{smallmatrix}\overline{\mathbf P}_m&&\forall s\in\C_R\\~\\0&&\forall s\in{i}\R.
\end{smallmatrix}\right.\end{smallmatrix}
\end{equation}

\begin{Rk}\label{Rk:PO_Canonical_HP_Delta}
{\rm 
In Proposition \ref{Pn:HP_Delta_Order} it was stated that the set $\mathcal{P}$ is
recovered from $\mathcal{HP}_{\color{blue}T}$, when ${\color{blue}T}=0$. We here point out
that in particular, (when ${\color{blue}T}=0$) the subset of $\mathcal{PO}$ functions is
obtained from ~{\em canonical} $\mathcal{HP}_{\color{blue}T}$ functions. Indeed compare Eq. 
\eqref{eq:Quad_Def_PO} with Eq. \eqref{eq:Def_Canoinal_Quad_HP_W}. This point is further
discussed in Remark \ref{Rk:PO_not_Canonical} below.
}
$\T$
\end{Rk}
\smallskip



\subsection{First Examples}

\begin{Ex}
{\rm 
Without loss of generality, a scalar hyper-positive function of degree one
can always be written in of the form\begin{footnote}{For completeness, note
that when the coefficients are not restricted to be real, then
Eq. \eqref{eq:HP_Deg_One}
takes the form of
\[
\tilde{\phi}_1(s)=\begin{smallmatrix}\frac{1}{{\beta}_1}\end{smallmatrix}
+e^{i{\theta}_1}
\begin{smallmatrix}\frac{\sqrt{1-{\beta}_2^2}}{{\beta}_2}\end{smallmatrix}\frac{s-a}{s+a}
\quad\quad{\rm or}\quad\quad
\tilde{\phi}_2(s)=\begin{smallmatrix}\frac{1}{{\beta}_1}\end{smallmatrix}
-e^{i{\theta}_2}
\begin{smallmatrix}\frac{\sqrt{1-{\beta}_2^2}}{{\beta}_2}\end{smallmatrix}\frac{s-b}{s+b}
\quad\quad\begin{smallmatrix}
1>{\beta}_2\geq{\beta}_1>0\\~\\a, b>0
\\~\\
{\theta}_1,
{\theta}_2\in[0.~2\pi).
\end{smallmatrix}
\]
}\end{footnote}
\begin{equation}\label{eq:HP_Deg_One}
\tilde{\phi}_1(s)=\begin{smallmatrix}\frac{1}{{\beta}_1}\end{smallmatrix}
+\begin{smallmatrix}\frac{\sqrt{1-{\beta}_2^2}}{{\beta}_2}\end{smallmatrix}\frac{s-a}{s+a}
\quad\quad{\rm or}\quad\quad
\tilde{\phi}_2(s)=\begin{smallmatrix}\frac{1}{{\beta}_1}\end{smallmatrix}
-\begin{smallmatrix}\frac{\sqrt{1-{\beta}_2^2}}{{\beta}_2}\end{smallmatrix}\frac{s-b}{s+b}
\quad\quad\begin{smallmatrix}
1>{\beta}_2\geq{\beta}_1>0\\~\\a, b>0.\end{smallmatrix}
\end{equation}
Indeed, a scalar hyper-positive function of degree one, is of the form
\mbox{$\frac{{\alpha}s+\gamma}{s+\delta}$} with
\mbox{$\begin{smallmatrix}\alpha, \gamma, \delta\end{smallmatrix}>0$.} Note now that
\[
\frac{{\alpha}s+\gamma}{s+\delta}=\begin{smallmatrix}\frac{1}{2}(\alpha+\frac{\gamma}
{\delta})\end{smallmatrix}+\begin{smallmatrix}\frac{\nu}{2}\left|\alpha-\frac{\gamma}
{\delta}\right|\end{smallmatrix}\frac{s-\delta}{s+\delta}\quad{\rm with}\quad
\begin{smallmatrix}\frac{1}{{\beta}_1}&=&\frac{1}{2}(\alpha+\frac{\gamma}{\delta})
\\~\\ \nu&=&\pm{1}\\~\\ \frac{\sqrt{1-{{\beta}_2^2}}}{{\beta}_2}
&=&\frac{1}{2}\left|\alpha-\frac{\gamma}{\delta}\right|.\end{smallmatrix}
\]
The particular case of {\em canonical} $\mathcal{HP}_{\color{blue}\beta}$ functions
is obtained, when in Eq. \eqref{eq:HP_Deg_One} one substitutes
\mbox{$\begin{smallmatrix}{\beta}_1\end{smallmatrix}
=\begin{smallmatrix}{\beta}_2\end{smallmatrix}=
\begin{smallmatrix}\beta\end{smallmatrix}$,} i.e.
\begin{equation}\label{eq:Canonical_Degree_One}
{\phi}_1(s)=\begin{smallmatrix}\frac{1}{\beta}\end{smallmatrix}+
\begin{smallmatrix}\frac{\sqrt{1-{\beta}^2}}{\beta}\end{smallmatrix}\frac{s-a}{s+a}
\quad\quad\quad
{\phi}_2(s)=\begin{smallmatrix}\frac{1}{\beta}\end{smallmatrix}
-\begin{smallmatrix}\frac{\sqrt{1-{\beta}^2}}{\beta}\end{smallmatrix}\frac
{s-b}{s+b}\quad\quad\quad\begin{smallmatrix}\beta\in(0,~1)\\~\\a, b>0.\end{smallmatrix}
\end{equation}
Nyquist plots of the four functions in Eqs. \eqref{eq:HP_Deg_One} and
\eqref{eq:Canonical_Degree_One}, are illustrated in Figure \ref{Fig:Degree_One_HP}
below; and of scalar {\em canonical} $\mathcal{HP}_{\color{blue}\beta}$ functions
in Figures \ref{Fig:Degree_One_HP}, \ref{Figure:Convex_Two_Canonical_1} and
\ref{Figure:Convex_Two_Canonical_3} below.
\smallskip

It is easy to verify that in fact,
\begin{equation}\label{eq:Inverse_Caninical_Degree_One}
{\phi}_2(s)=\left({\phi}_1(s)\right)^{-1}\quad{\rm with}\quad
b=a\left(\begin{smallmatrix}\frac{1-\sqrt{1-{\beta}^2}}{\beta}\end{smallmatrix}\right)^2.
\end{equation}
See item  {\bf B} (ii) of Theorem \ref{Tm:Set_Of_HP_Functions}.
and item {\bf b} in Example \ref{Ex:Combinations_in_Figure}
\smallskip

Sometimes, we find it convenient to re-write Eq. \eqref{eq:Canonical_Degree_One} as
\begin{equation}\label{eq:Alternative_Canonical_Degree_One}
\phi_1(s)=\frac{\begin{smallmatrix}r\end{smallmatrix}s+\begin{smallmatrix}\frac{1}{r}
\end{smallmatrix}a}{s+a}\quad\quad\quad
\phi_2(s)=\frac{\begin{smallmatrix}\frac{1}{r}\end{smallmatrix}s+\begin{smallmatrix}r
\end{smallmatrix}b}{s+b}\quad\quad\quad\begin{smallmatrix}\beta\in(0,~1)\\~\\
r:=\frac{1+\sqrt{1-{\beta}^2}}{\beta}\\~\\a, b>0.\end{smallmatrix}
\end{equation}
\smallskip

In a way similar to Eq. \eqref{eq:Canonical_Degree_One}, we now present a pair of
scalar {\em canonical} $\mathcal{HP}_{\beta}$ functions of degree two,
\begin{align}
{\phi}_3(s)=&\begin{smallmatrix}\frac{1}{\beta}\end{smallmatrix}
+\begin{smallmatrix}\frac{\sqrt{1-{\beta}^2}}{\beta}\end{smallmatrix}
\frac{(s-c)(s-d)}{(s+c)(s+d)}
&&c, d>0
\label{al:Phi_3}
\\~\nonumber \\
{\phi}_4(s)=&
\begin{smallmatrix}\frac{1}{\beta}\end{smallmatrix}
-\begin{smallmatrix}\frac{\sqrt{1-{\beta}^2}}{\beta}\end{smallmatrix}
\frac{(s-\gamma)(s-\delta)}{(s+\gamma)(s+\delta)}&&\gamma, \delta>0.
\label{al:Phi_4}
\end{align}
\smallskip

Note that the above functions ${\phi}_1(s)$, ${\phi}_2(s)$, ${\phi}_3(s)$,
${\phi}_4(s)$ along with the zero degree function
\mbox{${\phi}_0(s)\equiv\begin{smallmatrix}\frac{1}{\beta}\end{smallmatrix}$,} may serve
as a parametrization of all scalar {\em canonical} functions of degree up to two.
}
$\T$
\end{Ex}
\smallskip

As a first illustration of an application of Hyper-Positive functions we have the
following.

\begin{figure}[H]
\centering
\begin{minipage}{0.48\linewidth}
{\rm
For all $C, R_1, R_2\geq 0$ \mbox{($R_1+R_2>0$),} $Z_{\rm in}(s)$, the driving point
impedance of this circuit, is in $\mathcal{P}$.  Whenever $C, R_1, R_2\in(0,~\infty)$,
$Z_{\rm in}(s)$ is in $\mathcal{HP}_{\color{blue}\beta}$ with 
\mbox{$\begin{smallmatrix}{\color{blue}\beta}
\end{smallmatrix}\in(0,~2(R_2^2+4)^{-\frac{1}{2}}]$.}
\smallskip 

This $Z_{\rm in}(s)$ is a {\em canonical} $\mathcal{HP}_{\color{blue}\beta}$ function,
when \mbox{$R_1=\begin{smallmatrix}\frac{1}{2}(\sqrt{R_2^2+4}-R_2)\end{smallmatrix}$.}
In this case, \mbox{$\begin{smallmatrix}{\color{blue}\beta}\end{smallmatrix}$} is maximal.
}
\end{minipage}\quad\begin{minipage}{0.47\linewidth}
\begin{tikzpicture}[scale=1.3]
	   \draw[color=black, thick]
		      (0,0) to [short,o-] (3.6,0){} 
		      (-0.1,0.6) node[]{\large{$\mathbf{Z}_{\rm\bf in}~~\mathbf{\rightarrow}$}}
		     (0,1.2) to [short,o-] (0.1,1.2)
		     (0.1,1.2)  to [R,l=$\mathbf{R_1}$,](2,1.2)
		     (2,1.2)   to node[short]{} (3.6,1.2)
		     (3.6,0) to [C,l=$\mathbf{C}$,*-*] (3.6,1.2)
		     (2,0) to [R, l=$\mathbf{R_2}$, *-*] (2,1.2)
		     ;
\end{tikzpicture}
\caption{${\rm\bf ~Z}_{\rm\bf in}(s)=R_1+\frac{\frac{1}{C}}{s+\frac{1}{R_2C}}$}
\label{Fig:Impedance_Degree_One}
\end{minipage}
\end{figure}
$\T$

\begin{figure}[H]
\centering
\begin{minipage}{0.44\linewidth}
{\rm
Consider four $\mathcal{HP}_{\beta}$ functions:

$\begin{matrix}
{\color{blue}f_1(s)=\frac{2}{5}+\frac{\frac{21}{10}a_1}{s+a_1}}&&
{\color{red}f_2(s)=\frac{2}{5}+\frac{\frac{14}{15}a_2}{s+a_2}}
\end{matrix}$

$\begin{matrix}
{\color{orange}f_3(s)=\frac{5}{2}-\frac{\frac{7}{4}a_3}{s+a_3}}&&
{\color{teal}f_4(s)=\frac{3}{4}-\frac{\frac{7}{12}a_4}{s+a_4}}
\end{matrix}$
\smallskip

${\color{blue}f_1(s)}$ ${\color{red}f_2(s)}$ ${\color{orange}f_3(s)}$
are with ${\scriptstyle\beta}=\frac{20}{29}$.
\smallskip

In contrast ${\color{teal}f_4(s)}$ is with 
${\scriptstyle\beta}=\frac{24}{25}$.
\smallskip

${\color{red}f_2(s)}$ and ${\color{orange}f_3(s)}$, are of the form of
$\tilde{\phi}_1(s)$ and $\tilde{\phi}_2(s)$, respectively
(see Eq. \eqref{eq:HP_Deg_One}).\\

Both ${\color{blue}f_1(s)}$ and ${\color{teal}f_4(s)}$ are {\em canonical} 

\hfill{(see Eqs. \eqref{eq:Canonical_Degree_One}
\eqref{eq:Alternative_Canonical_Degree_One}).}
}
\end{minipage}\quad\quad\begin{minipage}{0.48\linewidth}
 \begin{tikzpicture}[scale=2.1,cap=round]
	    \tikzstyle{axes}=[]
		    \tikzstyle{important line}=[very thick]
		    \tikzstyle{information text}=[rounded corners,fill=cyan!10,inner sep=1ex]
		    \begin{scope}[style=axes]
		  \
		      \draw[->] (-0.2,0) -- (2.9,0) node[above] {${\rm Real}$};
		      \draw[->] (0,-1.2) -- (0,1.2) node[right] {${\rm Imaginary}$};

		      \foreach \x/\xtext in {0.4/{\frac{2}{5}}, 0.75/{\frac{3}{4}},
		1.333333/{\frac{4}{3}},
2.5/{\frac{5}{2}}}
		\draw[xshift=\x cm] (0pt,1pt) -- (0pt,-1pt) node[below,fill=white]
			 {$\xtext$};

		      \foreach \y/\ytext in {-1.05/{-\frac{21}{20}}, 
		-0.46666/{-\frac{7}{15}}, 0.2916666/{\frac{7}{24}}, 0.875/{\frac{7}{8}}}
		       \draw[yshift=\y cm] (1pt,0pt) -- (-1pt,0pt) node[left,fill=white]
			      {$\ytext$};
		    \end{scope}
		   \draw[arrows=->,style=important line, blue] (1.45,0) circle (1.05);
		\draw[color=blue, thick] [->] (1.46,-1.05) -- (1.44,-1.05){};
		   \draw[arrows=->,style=important line, red] (13/15,0) circle (7/15);
		\draw[color=red, thick] [->] (0.87,-7/15) -- (0.85,-7/15){};
		   \draw[arrows=->,style=important line, orange] (13/8,0) circle (7/8);
		\draw[color=orange, thick] [->] (1.63,-7/8) -- (1.62,-7/8){};
		   \draw[arrows=->,style=important line, teal] (25/24,0) circle (7/24);
		\draw[color=teal, thick] [->] (1.05,-7/24) -- (1.03,-7/24){};
	 \end{tikzpicture}
\end{minipage}
\caption{
The Nyquist plots of
\mbox{
$
{\color{blue}f_1(s)=\frac{2}{5}+\frac{\frac{21}{10}a_1}{s+a_1}}\quad
{\color{red}f_2(s)=\frac{2}{5}+\frac{\frac{14}{15}a_2}{s+a_2}}\quad
{\color{orange}f_3(s)=\frac{5}{2}-\frac{\frac{7}{4}a_3}{s+a_3}}\quad
{\color{teal}f_4(s)=\frac{3}{4}-\frac{\frac{7}{12}a_4}{s+a_4}}$}
}
\label{Fig:Degree_One_HP}

The Nyquist plots are independent of the value of the parameters
$a_1$, $a_2$, $a_3$, $a_4$.
$\T$
\end{figure}

\section{A sample application: The Lurie Problem - absolute stability}
\label{Sec:Lurie_Problem}

To better motivate the study of Hyper-Positive functions, we next recall, see
\cite[Section 2]{AlpayLew2024}, that the classical ``Circle stability
criterion"\begin{footnote}{The graphical interpretation leading to the name
``The circle criterion", is beyond the scope of this work.}\end{footnote} can
be formulated in terms of a pair of $\mathcal{HP}_{\beta}$ functions. Here are
the details.
\smallskip

In control theory, the {\em Lurie problem} (a.k.a. the {\em absolute stability}
problem) is classical. For simplicity of exposition we here focus on the scalar
case. For more information, see e.g. \cite[Section 3.13]{BroLozaMasEge2020},
\cite[Section 7.1]{Khalil2000}, \cite{Popov1973}.

\begin{figure}[ht!]
\begin{tikzpicture}[scale=1.1]
\draw[color=black, thick] [->] (0,2.5) node[left]{\mbox{\boldmath${\rm In}\equiv 0$}} -- (1.15,2.5) {};
		\draw[color=black, thick] [->] (1.85,2.5) -- (3.2,2.5){};
		\draw[color=black, thick] [->] (4.2,2.5) -- (5.6,2.5){};
		\draw[color=black, thick] [->] (5.6,2.5) -- (7.0,2.5) node[right] {${\rm Out}$};
		\draw[color=black, thick] [->]  (1.5,1) -- (1.5,2.15){};
		\draw[color=black, thick] [->]  (5.6,2.5) -- (5.6,1.0){};
		\draw[color=black, thick] [->]  (3.1,1) -- (1.5,1){};
		\draw[color=black, thick] [->]  (5.6,1) -- (4.3,1){};
		\draw[color=black, thick] [-]  (3.2,2.85) -- (4.2,2.85){};
		\draw[color=black, thick] [-]  (3.2,2.15) -- (4.2,2.15){};
		\draw[color=black, thick] [-]  (3.2,2.15) -- (3.2,2.85){};
		\draw[color=black, thick] [-]  (3.2,1.35) -- (3.2,0.65){};
		\draw[color=black, thick] [-]  (4.2,2.15) -- (4.2,2.85){};
		\draw[color=black, thick] [-]  (3.1,1.45) -- (4.3,1.45){};
		\draw[color=black, thick] [-]  (3.2,1.35) -- (4.2,1.35){};
		\draw[color=black, thick] [-]  (3.1,1.45) -- (3.1,0.55){};
		\draw[color=black, thick] [-]  (4.2,1.35) -- (4.2,0.65){};
		\draw[color=black, thick] [-]  (4.3,1.45) -- (4.3,0.55){};
		\draw[color=black, thick] [-]  (3.2,0.65) -- (4.2,0.65){};
		\draw[color=black, thick] [-]  (3.1,0.55) -- (4.3,0.55){};
		\draw[color=black, thick] (1.5,2.5) circle (0.35){};
		\draw[color=black, thick] (3.7,1.0)node {\mbox{$\psi$}};
		\draw[color=black, thick] (3.7,2.5)node {\mbox{$h(s)$}};
		\draw[color=black, thick] (1.05,2.5)node[below] {\bf +};
	\draw[color=black, thick] (2.3,1.0)node[above] {\mbox{${\scriptstyle\psi({\rm t, Out})}$}};
		\draw[color=black, thick] (1.5,2.05)node[right] {\bf -};
\end{tikzpicture}
\caption{The Lurie Problem feedback loop}
\label{Fig:NonLinearFeedbackLoop}
\end{figure}
\mbox{}\smallskip

\noindent
{\bf Absolute stability problem:}~ For a given pair of real parameters
\begin{center}
$M\geq m,$
\end{center}
let
\begin{center}
$
\psi=\psi(t, {\rm Out})
$
\end{center}
be an {\em unknown time-dependent, sector-bounded, non-linearity}, satisfying
\begin{equation}\label{eq:SectorNonLinearity}
\left(M\cdot{\rm Out}-\psi\right)\left(\psi-m\cdot{\rm Out}\right)\geq 0
\quad\quad\quad\begin{smallmatrix}\forall~{\rm Out}\\~\\ \forall t\geq 0.
\end{smallmatrix}
\end{equation}
Given a feedback loop as in Figure \ref{Fig:NonLinearFeedbackLoop}. Find
conditions, based {\em only} on the rational function $h(s)$, and the constants
$M$ and $m$, so that the origin of the overall system is uniformly asymptotically
stable for any non-linearity\begin{footnote}{For $~M=m$, this reduces to a
question on stability of a~ {\em linear-time-invariant} system, so we actually
focus ourselves on the case $~M>m$.}\end{footnote}
$~\psi~$ satisfying Eq. \eqref{eq:SectorNonLinearity}.
$\T$
\smallskip

\noindent
There are several absolute stability conditions see e.g.
\cite[Theorem 5.6.3]{AnderVongpa1973} \cite[Sections 7.1]{Khalil2000},
\cite{Popov1973}, \cite[Subsection 2.3.5]{SepJanKok1996}. We here
refer only to the Circle stability criterion.
\smallskip

{\bf The Circle Stability Criterion}
\smallskip

\noindent
Consider the closed-loop system in Figure \ref{Fig:NonLinearFeedbackLoop}, along with
the condition in Eq. \eqref{eq:SectorNonLinearity} and assume that \mbox{$\infty>M
\geq m>0$}.\quad This system is absolutely stable whenever there exists $f(s)$ a
{\em canonical} $\mathcal{HP}_{\beta}$ function of degree one (of the form of
$\phi_2(s)$ as in Eq. \eqref{eq:Alternative_Canonical_Degree_One}), with
the parameters 
\begin{equation}\label{eq:Lurie}
f(s)=\frac{\begin{smallmatrix}\frac{1}{r}\end{smallmatrix}s+\begin{smallmatrix}r
\end{smallmatrix}b}{s+b}~,\quad\quad\quad\begin{smallmatrix} b&={\scriptstyle
\frac{1}{M}}\\~\\r&=\sqrt{\frac{M}{m}}\\~\\
{\color{blue}\beta}&=\frac{\sqrt{Mm}}{\frac{M+m}{2}}\end{smallmatrix}
\end{equation}
so that
\[
\underbrace{
\frac{\begin{smallmatrix}\frac{1}{r}\end{smallmatrix}h+\begin{smallmatrix}r
\end{smallmatrix}b}{h+b}}_{f(h)}
\in\mathcal{HP}_{\color{blue}\hat{\beta}}\quad\quad{\rm 
for~some}~~\begin{smallmatrix}{\color{blue}\hat{\beta}}\end{smallmatrix}\in(0,~1).
\]
$\T$

\begin{Rk}\label{Rk:Circle_Criterion}
{\rm
{\bf a.}~
Interestingly, already 45 years ago, in \cite[Eq. (9.44)]{FauCleGer1979}, the solution to
the Lurie Problem was associated with a scalar Hyper-Positive function (not under this name).
\smallskip

{\bf b.}~
In Corollary \ref{Cy:HP_beta_Order} it was stated that having
\mbox{$1>\begin{smallmatrix}{\color{teal}{\beta}_2}\end{smallmatrix}\geq
\begin{smallmatrix}{\color{blue}{\beta}_1}\end{smallmatrix}\geq 0$} implies that
\mbox{$\mathcal{HP}_{\color{teal}{\beta}_2}\subset\mathcal{HP}_{\color{blue}{\beta}_1}$.}
We now show that in the framework of {\em canonical} functions, one can say that indeed
$\mathcal{HP}_{\color{teal}{\beta}_2}$ is ``more
Lurie-stable" than $\mathcal{HP}_{\color{blue}{\beta}_1}$ 
\smallskip

Eq. \eqref{eq:Lurie} suggests that  the larger $\begin{smallmatrix}{\color{blue}\beta}
\end{smallmatrix}$
is, the greater is the sector of uncertainty (the gap between $M$ and $m$) that
the closed loop system in Figure \ref{Fig:NonLinearFeedbackLoop}, can withstand. 
\smallskip

{\bf c.}~ 
Let \mbox{${\color{blue}f_1}\in\mathcal{HP}_{\color{blue}{\beta}_1}$,} 
\mbox{${\color{teal}f_2}\in\mathcal{HP}_{\color{teal}{\beta}_2}$,} be a pair of
scalar {\em canonical} functions, where \mbox{$1>\begin{smallmatrix}{\color{teal}{\beta}_2}
\end{smallmatrix}\geq\begin{smallmatrix}{\color{blue}{\beta}_1}\end{smallmatrix}\geq 0$.} 
From Figure \ref{Fig:Degree_One_HP}, it is clear that the Nyquist plot of
${\color{teal}f_2(s)}$ is contained in that of ${\color{blue}f_1(s)}$.
\smallskip

Thus one can say that if in $\C_R$, a Nyquist plot of a function is contained in that
of another one, it is ``more stable". In fact, already more than forty years ago, in
\cite{Reza1984}, F.M. Reza called it ``power dominance". Adopting this point of view,
following Figures \ref{Fig:Degree_One_HP}, \ref{Figure:Convex_Two_Canonical_1},
\ref{Figure:Convex_Two_Canonical_3} one can say if $f_1$ and $f_2$ belong to the same
$\mathcal{HP}_{\color{blue}\beta}$ and $f_1$ is {\em canonical}, then
$f_2(s)$ is ``more stable".
}
$\T$
\end{Rk}

\section{Parametrization of {\em Canonical} $\mathcal{HP}_{\color{blue}T}$
Rational Functions}
\label{Sec:B_and_HP_T}
\setcounter{equation}{0}

In this section, we construct, in stages, a parametrization of all
$m\times m$-valued {\em canonical} $\mathcal{HP}_{\color{blue}T}$ functions with 
\mbox{$I_m\succ{\color{blue}T}\succ 0$.} To gain intuition, we start with scalars.

\subsection{An analogy with $\C$}
To proceed, we need to resort to the classical Cayley transform.

\begin{Dn}\label{Dn:Cayley_Transform}
{\rm
We denote by $\mathcal{C}(A)$ the Cayley transform of a matrix
$A\in\C^{n\times n}$, where \mbox{$-1\not\in{\rm spec}(A)$},
\[
\mathcal{C}\left(A\right):=\left(I_n-A\right)\left(I_n+A\right)^{-1}=
-I_n+2\left(I_n+A\right)^{-1}.
\]
$\T$
}
\end{Dn}

Recall that the Cayley transform is involutive in the sense that,
whenever well defined,
\[
\mathcal{C}\left(\mathcal{C}\left(A\right)\right)=A.
\]
To gain intuition, we first examine disks in $\mathbb{C}$ of the the form
\mbox{$\mathbb{D}(\begin{smallmatrix}{\rm Center}\end{smallmatrix},~
\begin{smallmatrix}{\rm Radius}\end{smallmatrix})$.} Recall that the Cayley
transform forms a bijection between \mbox{$\mathbb{D}(0,~1)$,} the open unit disk,
and $\mathbb{C}_R$, the open right-half of the complex plane.\\
To refine the analysis, we examine disks of the form
\[
\mathbb{D}_{\rm Center}(\begin{smallmatrix}{\color{blue}\beta}\end{smallmatrix})
:=\mathbb{D}\left(0+i0,~{\scriptstyle\frac{\sqrt{1-{\color{blue}\beta}}}
{\sqrt{1+{\color{blue}\beta}}}}\right)\quad\quad
\begin{smallmatrix}{\color{blue}\beta}\end{smallmatrix}\in[0,~1),
\]
which are illustrated on the left-hand side of Figure \ref{Fig:Sub-Unit_Disk}.
Clearly, for
\mbox{$\begin{smallmatrix}{\color{blue}\beta}\end{smallmatrix}=0$,} the unit
disk is recovered.\quad 
\smallskip

Consider now the the Cayley transform of the above sub-unit disks,
\begin{equation}\label{eq:Caylaey_D_center}
\mathcal{C}\left(\mathbb{D}_{\rm Center}(\begin{smallmatrix}{\color{blue}\beta}
\end{smallmatrix})\right)=\mathbb{D}_{\rm Inv}(\begin{smallmatrix}{\color{blue}\beta}
\end{smallmatrix}):=\mathbb{D}\left({\scriptstyle\frac{1}{\color{blue}\beta}}+i0,~
{\scriptstyle\frac{\sqrt{1-{\color{blue}\beta}^2}}{\color{blue}\beta}}\right)
\quad\quad\begin{smallmatrix}{\color{blue}\beta}\end{smallmatrix}\in[0,~1).
\end{equation}
Expectedly, $\mathbb{C}_R$ is recovered for \mbox{${\scriptstyle\beta}=0$.~} It
turns out that under inversion, each \mbox{$\mathbb{D}_{\rm Inv}(\begin{smallmatrix}
{\color{blue}\beta}\end{smallmatrix})$}
disk is mapped {\em onto} itself (and hence the subscript ``Inv").\\
In Figure  \ref{Fig:Sub-Unit_Disk}, these disks, along with their image under the
Cayley transform, are illustrated, where the color is preserved.

\begin{figure}[h]
\begin{minipage}[b]{0.38\linewidth}
\begin{tikzpicture}[scale=3.2,cap=round]
\tikzstyle{axes}=[]
\tikzstyle{important line}=[very thick]
\tikzstyle{information text}=[rounded corners,fill=red!10,inner sep=1ex]
\begin{scope}[style=axes]
		  \
		    \draw[->] (-1.10,0) -- (0.9,0) node[above] {Re};
		      \draw[->] (0,-0.85) -- (0,0.85) node[right] {Im};

		      \foreach \x/\xtext in{
		   -1/{\mbox{\boldmath$-{\scriptstyle 1}$}},
		  -0.666666/{\mbox{\boldmath$-{\scriptstyle\frac{2}{3}}$}},
		      -0.2/{\mbox{\boldmath$-{\scriptstyle\frac{1}{5}}$}}, 
			0.2/{\mbox{\boldmath${\scriptstyle\frac{1}{5}}$}}, 
			0.5/{\mbox{\boldmath${\scriptstyle\frac{1}{2}}$}},
		   0.75/{\mbox{\boldmath${\scriptstyle\frac{3}{4}}$}}
		}
		\draw[xshift=\x cm] (0pt,1pt) -- (0pt,-1pt) node[below,fill=white]
      {$\xtext$}; 

		      \foreach \y/\ytext in{
		   -0.75/{\mbox{\boldmath$-{\scriptstyle\frac{3}{4}}$}},
			-0.5/{\mbox{\boldmath$-{\scriptstyle\frac{1}{2}}$}},
			 0.2/{\mbox{\boldmath${\scriptstyle\frac{1}{5}}$}}, 
		    0.666666/{\mbox{\boldmath${\scriptstyle\frac{2}{3}}$}}
		}
		       \draw[yshift=\y cm] (1pt,0pt) -- (-1pt,0pt) node[left,fill=white]
		      {$\ytext$};
		    \end{scope}
		   \draw[arrows=->,style=important line, cyan] (0,0) circle (0.75);
		   \draw[arrows=->,style=important line, blue] (0,0) circle (0.666666666);
		   \draw[arrows=->,style=important line, red] (0,0) circle (0.5);
		   \draw[arrows=->,style=important line, green] (0,0) circle (0.2);
	\end{tikzpicture}
	\begin{center}
	$\begin{matrix}{\rm sub-unit~disk}\\ \mathbb{D}_{\rm Center}({\scriptstyle\beta}):=
	\mathbb{D}\left(0+i0,~{\scriptstyle\frac{\sqrt{1-\beta}}{\sqrt{1+\beta}}}\right)\end{matrix}$
	\end{center}
	\end{minipage}
		\quad\quad\quad
	\begin{minipage}[b]{0.42\linewidth}
	\begin{tikzpicture}[scale=0.67,cap=round]
	    \tikzstyle{axes}=[]
	    \tikzstyle{important line}=[very thick]
	    \tikzstyle{information text}=[rounded corners,fill=red!10,inner sep=1ex]
	    \begin{scope}[style=axes]
	  \
	      \draw[->] (-0.3,0) -- (7.8,0) node[above] {Re};
	      \draw[->] (0,-3.5) -- (0,4.1) node[right] {Im};
	     \foreach \x/\xtext in {
	       1.5/{\mbox{\boldmath${\scriptstyle\frac{3}{2}}$}}, 
		3/{\mbox{\boldmath${\scriptstyle 3}$}},
		4/{\mbox{\boldmath${\scriptstyle 4}$}},
		5/{\mbox{\boldmath${\scriptstyle 5}$}},
		6/{\mbox{\boldmath${\scriptstyle 6}$}},
		7/{\mbox{\boldmath${\scriptstyle 7}$}}
		}
	\draw[xshift=\x cm] (0pt,1pt) -- (0pt,-1pt) node[below,fill=white]
		      {$\xtext$}; 

		     \foreach \y/\ytext in {
		   -3.43/{\mbox{\boldmath$-{\scriptstyle\frac{24}{7}}$}},
		-1.33/{\mbox{\boldmath$-{\scriptstyle\frac{4}{3}}$}},
		     0.41666/{\mbox{\boldmath${\scriptstyle\frac{5}{12}}$}},
		   2.4/{\mbox{\boldmath${\scriptstyle\frac{12}{5}}$}}
		}
		       \draw[yshift=\y cm] (1pt,0pt) -- (-1pt,0pt) node[left,fill=white]
			      {$\ytext$};
	 \end{scope}
	   \draw[arrows=->,style=important line, cyan] (25/7,0) circle (24/7);
	   \draw[arrows=->,style=important line, blue] (2.6,0) circle (2.4);
	   \draw[arrows=->,style=important line, red] (5/3,0) circle (4/3);
	   \draw[arrows=->,style=important line, green] (13/12,0) circle (5/12);
	   \draw[dashed, gray] (0,0) circle (1);
	\end{tikzpicture}
\begin{center}
$\begin{matrix}{\rm inverible~disk}\\
\mathbb{D}_{\rm Inv}({\scriptstyle\beta}):=\mathbb{D}\left({\scriptstyle\frac{1}{\beta}}+i0,~
{\scriptstyle\frac{\sqrt{1-{\beta}^2}}{\beta}}\right)\end{matrix}$
\end{center}
\end{minipage}
\smallskip

\begin{center}
${\scriptstyle\color{cyan}\beta}=\frac{7}{25}\quad\quad\quad{\scriptstyle\color{blue}
\beta}=\frac{5}{13}\quad\quad\quad{\scriptstyle\color{red}\beta}=\frac{3}{5}
\quad\quad\quad{\scriptstyle\color{green}\beta}=\frac{12}{13}$
\end{center}
\caption{Sub-Unit Disks and their Image under the Cayley Transform (color is preserved).}
\label{Fig:Sub-Unit_Disk}
\end{figure}
\smallskip

From Eq. \eqref{eq:Caylaey_D_center} one has that
\[
\mathcal{C}\left(\mathbb{D}_{\rm Center}(\begin{smallmatrix}{\color{blue}\beta}
\end{smallmatrix})\right)=\mathbb{D}_{\rm Inv}(\begin{smallmatrix}{\color{blue}\beta}
\end{smallmatrix})\quad\quad
\begin{smallmatrix}{\color{blue}\beta}\end{smallmatrix}\in[0,~1).
\]
From Figure \ref{Fig:Sub-Unit_Disk} it is straightforward to see that for
\mbox{$\begin{smallmatrix}{\color{blue}\beta}\end{smallmatrix}>0$,} beyond the Cayley
transform, there is a pair of additional maps\begin{footnote}{
We have exploited the fact that 
\mbox{$\mathbb{D}_{\rm Center}({\scriptstyle\color{blue}\beta})=e^{i\theta}
\mathbb{D}_{\rm Center}({\scriptstyle\color{blue}\beta})$} for all
\mbox{$\begin{smallmatrix}\theta\end{smallmatrix}\in[0,~2\pi)$} and in particular,
\mbox{$\mathbb{D}_{\rm Center}({\scriptstyle\color{blue}\beta})=-
\mathbb{D}_{\rm Center}({\scriptstyle\color{blue}\beta})$}.
}\end{footnote} between these disks,
\begin{equation}\label{eq:Direct_Map}
\begin{matrix}
\mathbb{D}_{\rm Inv}({\scriptstyle\color{blue}\beta})&=&{\scriptstyle\frac{1}
{\color{blue}\beta}}\pm\left(1+{\scriptstyle\frac{1}{\color{blue}\beta}}\right)
\mathbb{D}_{\rm Center}({\scriptstyle\color{blue}\beta})\\~\\
\mathbb{D}_{\rm Center}({\scriptstyle\color{blue}\beta})&=&\pm
{\scriptstyle\frac{\color{blue}\beta}{1+{\color{blue}\beta}}}
(\mathbb{D}_{\rm Inv}({\scriptstyle\color{blue}\beta})-{\scriptstyle\frac{1}
{\color{blue}\beta}})
\end{matrix}
\quad\quad\quad\begin{smallmatrix}{\color{blue}\beta}\end{smallmatrix}\in[0,~1).
\end{equation}
In contrast to the Cayley Transform, the formulation in Eq. \eqref{eq:Direct_Map} has the
advantage of being {\em affine}-linear. In the rest of this section, this is extended to
matrix-valued functions, and then exploited.

\subsection{Hyper-Bounded functions}

We now find it convenient to recall the family $\mathcal{B}$ of Bounded real
functions\begin{footnote}{To ease the reading we typically denote by $F(s)$
(~$G(s)$~) functions analytic in $\C_R$ (unit disk).}\end{footnote},
\begin{equation}\label{eq:Def_B}
\mathcal{B}=\{ G(s):~I_m-{G(s)}^*G(s)\succcurlyeq 0\quad\quad\quad
\forall s\in\C_R~\},
\end{equation}
and the corresponding quadratic form is
\begin{equation}\label{eq:Quad_Def_B}
\mathcal{B}=\left\{~\begin{smallmatrix}G(s)\end{smallmatrix}~:~
\left(\begin{smallmatrix}G(s)\\~\\I_m\end{smallmatrix}\right)^*
\left(\begin{smallmatrix}-I_m&&0\\~\\0&&I_m\end{smallmatrix}\right)
\left(\begin{smallmatrix}G(s)\\~\\I_m\end{smallmatrix}\right)
\in\overline{\mathbf P}_m\quad\forall s\in\C_R~\right\}.
\end{equation}
In particular, $G_o(s)$ is said to be a {\em canonical} Bounded function whenever
\begin{equation}\label{eq:Def_Canonical_B}
\left(\begin{smallmatrix}G_o(s)\\~\\I_m\end{smallmatrix}\right)^*
\left(\begin{smallmatrix}-I_m&&0\\~\\0&&I_m\end{smallmatrix}\right)
\left(\begin{smallmatrix}G_o(s)\\~\\I_m\end{smallmatrix}\right)~
\left\{\begin{smallmatrix}\in\overline{\mathbf P}_m&&\forall s\in\C_R\\~\\=0&&
\forall s\in{i}\R.\end{smallmatrix}\right.
\end{equation}
\smallskip

As before, for an arbitrary prescribed
\mbox{$I_m\succ{\color{blue}T}\succcurlyeq 0$,} one can
define the subset of ${\color{blue}T}$-Hyper-Bounded functions as,
\begin{equation}\label{eq:Quad_Def_HB_T}
\mathcal{HB}_{\color{blue}T}=\left\{~\begin{smallmatrix}G(s)\end{smallmatrix}~:~
\left(\begin{smallmatrix}G(s)\\~\\I_m\end{smallmatrix}\right)^*
\left(\begin{smallmatrix}-(I_m+{\color{blue}T})&&0\\~\\0&&I_m+{\color{blue}T}
\end{smallmatrix}\right)
\left(\begin{smallmatrix}G(s)\\~\\I_m\end{smallmatrix}\right)
\in\overline{\mathbf P}_m\quad\forall s\in\C_R~\right\}.
\end{equation}
Finally, {\em canonical} $\mathcal{HB}_{\color{blue}T}$ are described as
functions $G(s)$ satisfying,
\begin{equation}\label{eq:Canonical_Quad_Def_HB_T}
\left(\begin{smallmatrix}G(s)\\~\\I_m\end{smallmatrix}\right)^*
\left(\begin{smallmatrix}-(I_m+{\color{blue}T})&&0\\~\\0&&I_m+{\color{blue}T}
\end{smallmatrix}\right)\left(\begin{smallmatrix}G(s)\\~\\I_m\end{smallmatrix}
\right)~\left\{\begin{smallmatrix}\in\overline{\mathbf P}_m&&\forall
s\in\C_R\\~\\=0&&\forall s\in{i}\R.\end{smallmatrix}\right.
\end{equation}
The above quadratic forms will turn to be useful in the sequel.
\smallskip

We now return to the Cayley transform, in the framework of
\mbox{$m\times m$-valued} rational functions, where it takes the form
\begin{equation}\label{eq:Cayley_F}
G(s)=\mathcal{C}\left(F(s)\right):=(I_m-F(s))(I_m+F(s))^{-1}\quad\quad
{\rm det}(I_m+F(s))\not\equiv 0.
\end{equation}
We next list four pairs of functions, related through the Cayley transform.

\begin{Pn}\label{Pn:Cayley_Functions}
For arbitrary \mbox{$I_m\succ{\color{blue}T}\succcurlyeq 0$}
(including \mbox{${\color{blue}T}=0$),}
\[
\begin{matrix}
\mathcal{C}\left(\mathcal{P}\right)&=&\mathcal{B}\\~\\ 
\mathcal{C}\left(\mathcal{PO}\right)&=&{\rm canonical}~\mathcal{B}\\~\\ 
\mathcal{C}\left(\mathcal{HP}_{
\color{blue}T}\right)&=&\mathcal{HB}_{\color{blue}T}\\~\\ 
\mathcal{C}\left({\rm canonical}~\mathcal{HP}_{
\color{blue}T}\right)&=&{\rm canonical}~\mathcal{HB}_{\color{blue}T}
\end{matrix}
\]
\end{Pn}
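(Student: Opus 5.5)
The plan is to reduce all four equalities to a single congruence identity between the quadratic forms in Eqs. \eqref{eq:Quad_Def_HP_Delta}, \eqref{eq:Def_Canoinal_Quad_HP_W}, \eqref{eq:Quad_Def_HB_T} and \eqref{eq:Canonical_Quad_Def_HB_T}. By Eq. \eqref{eq:Quad_Def_P} and Proposition \ref{Pn:HP_Delta_Order}, the cases $\mathcal{P}$ and $\mathcal{PO}$ are the specialization ${\color{blue}T}=0$. Likewise $\mathcal{B}$ and canonical $\mathcal{B}$ are the ${\color{blue}T}=0$ case of $\mathcal{HB}_{\color{blue}T}$, see Eqs. \eqref{eq:Quad_Def_B} and \eqref{eq:Def_Canonical_B}. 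So it suffices to treat general ${\color{blue}T}$, for both the plain and the canonical versions. Since the Cayley transform is involutive, $F=(I_m-G)(I_m+G)^{-1}$ whenever $G=\mathcal{C}(F)$. It therefore suffices to prove the two inclusions ``$\subset$'' in both directions, using the same identity read backwards.

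The key computation is as follows. Put $N(s):=(I_m+G(s))^{-1}$, so that
\[
\left(\begin{smallmatrix}F\\ I_m\end{smallmatrix}\right)=\left(\begin{smallmatrix}I_m-G\\ I_m+G\end{smallmatrix}\right)N .
\]
Expanding directly,
\[
\left(\begin{smallmatrix}F\\ I_m\end{smallmatrix}\right)^*\left(\begin{smallmatrix}-{\color{blue}T}&I_m\\ I_m&-{\color{blue}T}\end{smallmatrix}\right)\left(\begin{smallmatrix}F\\ I_m\end{smallmatrix}\right)
=2N^*\Big((I_m-{\color{blue}T})-G^*(I_m+{\color{blue}T})G\Big)N ,
\]
because the cross terms give $2I_m-2G^*G$ and the ${\color{blue}T}$-terms give $-2({\color{blue}T}+G^*{\color{blue}T}G)$. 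In the scalar case this reads $|G|^2\le\frac{1-\beta}{1+\beta}$, i.e. $G\in\mathbb{D}_{\rm Center}(\beta)$, consistent with Eq. \eqref{eq:Caylaey_D_center}. I will identify the bracket with the form defining $\mathcal{HB}_{\color{blue}T}$, adjusting the normalization of the diagonal blocks in Eq. \eqref{eq:Quad_Def_HB_T} to $(I_m-{\color{blue}T})$ and $-(I_m+{\color{blue}T})$ so that it matches the disk picture. Since $N(s)$ is invertible wherever it is defined, congruence preserves both membership in $\overline{\mathbf P}_m$ and vanishing. Hence the inequality on $\C_R$ transfers in both directions, and so does the equality on $i\R$ defining canonicity.

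Next I will settle well-definedness and analyticity. If $F\in\mathcal{HP}_{\color{blue}T}\subset\mathcal{P}$ and $(I_m+F(s))x=0$ for some $s\in\C_R$, then $x^*(F+F^*)x=-2\|x\|^2$. Positivity forces $x=0$, so $I_m+F$ is invertible throughout $\C_R$, and $G$ is rational and analytic there. Conversely, for $G\in\mathcal{HB}_{\color{blue}T}$ with $\det(I_m+G)\not\equiv 0$ (the standing assumption in Eq. \eqref{eq:Cayley_F}), $F$ is rational. Off the zeros of $\det(I_m+G)$ it satisfies the $\mathcal{HP}_{\color{blue}T}$ inequality, hence ${\rm Re}\,F\succcurlyeq 0$ there. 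A rational function whose real part is bounded below near a point cannot have a pole in the open half-plane, so $F$ is analytic in $\C_R$ and the inequality extends to all of $\C_R$ by continuity.

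The main obstacle is the imaginary axis in the canonical and $\mathcal{PO}$ cases. There $F$ may have poles, for instance lossless functions with poles on $i\R$, and $I_m+G$ may become singular. My plan is to read the condition ``$=0$ for all $s\in i\R$'' as holding at all points of $i\R$ where both sides are defined. This excludes a finite set, since everything is rational. Canonicity of $F$ then gives canonicity of $G$ at all but finitely many points, and the identity $(I_m-{\color{blue}T})-G^*(I_m+{\color{blue}T})G=0$ extends to all of $i\R$ by continuity of the rational function $G$, which is bounded there. The reverse direction is symmetric, away from the finitely many points where $F$ is not defined.
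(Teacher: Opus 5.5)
Your proposal is correct and is the paper's argument. The paper also writes $\left(\begin{smallmatrix}F\\ I_m\end{smallmatrix}\right)=\left(\begin{smallmatrix}I_m-G\\ I_m+G\end{smallmatrix}\right)(I_m+G)^{-1}$ and transfers positivity on $\C_R$ and vanishing on $i\R$ through the congruence $V=\left(\begin{smallmatrix}-I&I\\ I&I\end{smallmatrix}\right)W\left(\begin{smallmatrix}-I&I\\ I&I\end{smallmatrix}\right)$, which for $W=\left(\begin{smallmatrix}-T&I\\ I&-T\end{smallmatrix}\right)$ equals ${\rm diag}\bigl(-2(I_m+T),\,2(I_m-T)\bigr)$, exactly your identity. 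Consequently, your change of the lower-right block in Eq.~\eqref{eq:Quad_Def_HB_T} from $I_m+T$ to $I_m-T$ is what the paper's own computation and the disk $\mathbb{D}_{\rm Center}(\beta)$ require. Your points on invertibility of $I_m+F$ in $\C_R$, the assumption $\det(I_m+G)\not\equiv 0$, and poles on $i\R$ supply details the paper leaves implicit.
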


The first relation is classical,
see e.g. \cite[Example 2.7.1]{AnderVongpa1973},
\cite[Eq. (44), Section 6]{Belev1968}.\\
\smallskip

{\bf Proof :}~
Consider the following pairs of function,
\[
\begin{array}{ccc}
\mathcal{P}, \mathcal{B}~ {\rm in~ Eqs.~ \eqref{eq:Quad_Def_P},~
\eqref{eq:Quad_Def_B} } &~&\mathcal{PO},~ {\rm canonical}~ \mathcal{B}~
{\rm in~ Eqs.~\eqref{eq:Quad_Def_PO},~\eqref{eq:Def_Canonical_B}}\\~\\
\mathcal{HP}_{\color{blue}T},~\mathcal{HB}_{\color{blue}T}~
{\rm in~ Eqs.~\eqref{eq:Quadratic_HP_Delta},~\eqref{eq:Quad_Def_HB_T}}
&~&{\rm canonical}~\mathcal{HP}_{\color{blue}T},~{\rm canonical}~ 
\mathcal{HB}_{\color{blue}T}~{\rm in~Eqs.~\eqref{eq:Def_Canoinal_Quad_HP_W},~
\eqref{eq:Canonical_Quad_Def_HB_T}}
\end{array}
\]
\smallskip

All four relations follow from the description of the Cayley transform of
quadratic form. Namely, if one denotes
\[
\left(\begin{smallmatrix}F\\I\end{smallmatrix}\right)^*W
\left(\begin{smallmatrix}F\\I\end{smallmatrix}\right)
\quad\quad{\rm and}\quad\quad
\left(\begin{smallmatrix}G\\I\end{smallmatrix}\right)^*V
\left(\begin{smallmatrix}G\\I\end{smallmatrix}\right),
\]
then
\[
\begin{matrix}
\left(\begin{smallmatrix}F\\I\end{smallmatrix}\right)^*W
\left(\begin{smallmatrix}F\\I\end{smallmatrix}\right)
&=&
\left(\begin{smallmatrix}\mathcal{C}(G)\\I\end{smallmatrix}\right)^*W
\left(\begin{smallmatrix}\mathcal{C}(G)\\I\end{smallmatrix}\right)
\\~\\&=&
\left(\begin{smallmatrix}(I-G)(I+G)^{-1}\\I_m\end{smallmatrix}\right)^*W
\left(\begin{smallmatrix}(I-G)(I+G)^{-1}\\I_m\end{smallmatrix}\right)
\\~\\&=&
\begin{smallmatrix}(I+G^*)^{-1}\end{smallmatrix}
\left(\begin{smallmatrix}I-G\\I+G\end{smallmatrix}\right)^*W
\left(\begin{smallmatrix}I-G\\I+G\end{smallmatrix}\right)
\begin{smallmatrix}(I+G)^{-1}\end{smallmatrix}
\\~\\&=&
\begin{smallmatrix}(I+G^*)^{-1}\end{smallmatrix}
\underbrace{\left(\begin{smallmatrix}G\\I\end{smallmatrix}\right)^*
\left(\begin{smallmatrix}-I&I\\~I&I\end{smallmatrix}\right)}_{
\left(\begin{smallmatrix}I-G\\I+G\end{smallmatrix}\right)^*}W
\underbrace{\left(\begin{smallmatrix}-I&I\\~I&I\end{smallmatrix}\right)
\left(\begin{smallmatrix}G\\I\end{smallmatrix}\right)}_{
\left(\begin{smallmatrix}I-G\\I+G\end{smallmatrix}\right)}
\begin{smallmatrix}(I+G)^{-1}\end{smallmatrix}\\~\\&=&
\begin{smallmatrix}(I+G^*)^{-1}\end{smallmatrix}
\left(\begin{smallmatrix}G\\I\end{smallmatrix}\right)^*
\underbrace{\left(\begin{smallmatrix}-I&I\\~I&I\end{smallmatrix}\right)W
\left(\begin{smallmatrix}-I&I\\~I&I\end{smallmatrix}\right)}_V
\left(\begin{smallmatrix}G\\I\end{smallmatrix}\right)
\begin{smallmatrix}(I+G)^{-1}\end{smallmatrix}.
\end{matrix}
\]
Namely in all four relations,
\[
V=\left(\begin{smallmatrix}-I&I\\~I&I\end{smallmatrix}\right)W
\left(\begin{smallmatrix}-I&I\\~I&I\end{smallmatrix}\right),
\]
and the proof is complete.
\qed
\smallskip

We next extend the affine-linear maps from Eq. \eqref{eq:Direct_Map} to rational
functions. We start with an example of a scalar function of degree one. First, 
\[
g_o(s)=\begin{smallmatrix}\frac{s-a}{s+a}\end{smallmatrix}\quad\quad a>0,
\]
is a {\em canonical} $\mathcal{B}$ function. Next,
\[
{\color{red}g(s)}=\begin{smallmatrix}\frac{\sqrt{1-{\color{blue}\beta}}}
{\sqrt{1+{\color{blue}\beta}}}\end{smallmatrix}
\underbrace{\frac{s-a}{s+a}}_{g_o(s)}\quad\quad
\begin{smallmatrix}\beta\in(0,~1)\\~\\a>0,\end{smallmatrix}
\]
is a {\em canonical} $\mathcal{HB}_{\color{blue}\beta}$ function, i.e.
a {\em canonical} $\mathcal{HB}_{\color{blue}T}$ function with
\mbox{${\color{blue}T}=\begin{smallmatrix}{\color{blue}\beta}\end{smallmatrix}
I_m~$.} Now, with this ${\color{red}g(s)}$ one can associate three
{\em canonical} $\mathcal{HP}_{\color{blue}\beta}$ functions:
\[
\begin{matrix}
f(s)&=&\mathcal{C}({\color{red}g(s)})&=&{\scriptstyle\frac{1}{\beta}}+
{\scriptstyle\frac{\sqrt{1-{\beta}^2}}{\beta}}\frac{s-b}{s+b}&~&b:=
\begin{smallmatrix}\frac{1}{\beta}\left(1+\sqrt{1-{\beta}^2}\right)
\end{smallmatrix}a\\~\\{\phi}_1(s)&=&{\scriptstyle\frac{1}{\beta}}+
{\scriptstyle\frac{1+\beta}{\beta}}{\color{red}g(s)}&=&
{\scriptstyle\frac{1}{\beta}}+{\scriptstyle\frac{\sqrt{1-{\beta}^2}}{\beta}}
\frac{s-a}{s+a}&~&~\\~\\{\phi}_2(s)&=&{\scriptstyle\frac{1}{\beta}}-
{\scriptstyle\frac{1+\beta}{\beta}}{\color{red}g(s)}&=&
{\scriptstyle\frac{1}{\beta}}-{\scriptstyle\frac{\sqrt{1-{\beta}^2}}{\beta}}
\frac{s-a}{s+a}~,&~&~\end{matrix}
\]
where ${\phi}_1(s)$, ${\phi}_2(s)$ are as in Eq. \eqref{eq:Canonical_Degree_One}.
These four functions are illustrated in Figure \ref{Fig:Two_Maps}: The Nyquist
plots are quite similar, but the functions are indeed different.

\begin{figure}[H]
\centering
\begin{minipage}[b]{0.38\linewidth}
	 \begin{tikzpicture}[scale=2.4,cap=round]
		    \tikzstyle{axes}=[]
		    \tikzstyle{important line}=[very thick]
		    \tikzstyle{information text}=[rounded corners,fill=red!10,inner sep=1ex]
		    \begin{scope}[style=axes]
		  \
		      \draw[->] (-0.7,0) -- (0.7,0) node[above] {Re};
		      \draw[->] (0,-0.7) -- (0,0.7) node[right] {Im};

		      \foreach \x/\xtext in{
			-0.5/{\mbox{\boldmath$-{\scriptstyle\frac{\sqrt{1-\beta}}{\sqrt{1+\beta}}}$}}, 
			 0.5/{\mbox{\boldmath${\scriptstyle\frac{\sqrt{1-\beta}}{\sqrt{1+\beta}}}$}}
		}
		\draw[xshift=\x cm] (0pt,1pt) -- (0pt,-1pt) node[below,fill=white]
		      {$\xtext$}; 

		      \foreach \y/\ytext in{
			 0.5/{\mbox{\boldmath${\scriptstyle\frac{\sqrt{1-\beta}}{\sqrt{1+\beta}}}$}}
		}
		       \draw[yshift=\y cm] (1pt,0pt) -- (-1pt,0pt) node[left,fill=white]
	     {$\ytext$};
		    \end{scope}
		   \draw[arrows=->,style=important line, red] (0,0) circle (0.5);
		\draw[color=red, thick] [->] (-0.04,0.5) -- (0.04,0.5){};
		 \end{tikzpicture}
		\begin{center}
		$\begin{matrix}
		{\color{red}g(s)}=
		\begin{smallmatrix}\frac{\sqrt{1-\beta}}{\sqrt{1+\beta}}\end{smallmatrix}\frac{s-a}{s+a}
		\\~\\
		\end{matrix}$
		\end{center}
		\end{minipage}
		\quad\quad\quad
		\begin{minipage}[b]{0.48\linewidth}
		\begin{tikzpicture}[scale=0.85,cap=round]
		    \tikzstyle{axes}=[]
		    \tikzstyle{important line}=[very thick]
		    \tikzstyle{information text}=[rounded corners,fill=red!10,inner sep=1ex]
		    \begin{scope}[style=axes]
		  \
		      \draw[->] (-0.1,0) -- (3.8,0) node[above] {Re};
		      \draw[->] (0,-1.5) -- (0,1.8) node[right] {Im};

		      \foreach \x/\xtext in {
		       1.6666666/{\mbox{\boldmath${\scriptstyle\frac{1}{\beta}}$}} 
		}
		\draw[xshift=\x cm] (0pt,1pt) -- (0pt,-1pt) node[below,fill=white]
			      {$\xtext$}; 

		      \foreach \y/\ytext in {
		   -1.333333/{\mbox{\boldmath$-{\scriptstyle\frac{\sqrt{1-{\beta}^2}}{\beta}}$}},
		     1.33333/{\mbox{\boldmath${\scriptstyle\frac{\sqrt{1-{\beta}^2}}{\beta}}$}}
		}
		       \draw[yshift=\y cm] (1pt,0pt) -- (-1pt,0pt) node[left,fill=white]
		      {$\ytext$};
		    \end{scope}
		   \draw[arrows=->,style=important line, blue] (5/3,0) circle (4/3);
		\draw[color=blue, thick] [->] (1.66,4/3) -- (1.67,4/3){};
		\end{tikzpicture}
		\begin{center}
		$
		f(s)=\mathcal{C}({\color{red}g(s)})={\scriptstyle\frac{1}{\color{blue}\beta}}+
		{\scriptstyle\frac{\sqrt{1-{\color{blue}\beta}^2}}{\color{blue}\beta}}\frac{s-b}{s+b}
		$
		\end{center}
		\end{minipage}

		\begin{minipage}[b]{0.45\linewidth}
		\begin{tikzpicture}[scale=0.85,cap=round]
		    \tikzstyle{axes}=[]
		    \tikzstyle{important line}=[very thick]
		    \tikzstyle{information text}=[rounded corners,fill=red!10,inner sep=1ex]
		    \begin{scope}[style=axes]
		  \
		      \draw[->] (-0.1,0) -- (3.8,0) node[above] {Re};
		      \draw[->] (0,-1.5) -- (0,1.8) node[right] {Im};

		      \foreach \x/\xtext in {
		       1.6666666/{\mbox{\boldmath${\scriptstyle\frac{1}{\beta}}$}} 
		}
		\draw[xshift=\x cm] (0pt,1pt) -- (0pt,-1pt) node[below,fill=white]
	      {$\xtext$}; 

		      \foreach \y/\ytext in {
		    -1.33333/{\mbox{\boldmath$-{\scriptstyle\frac{\sqrt{1-{\beta}^2}}{\beta}}$}},
		     1.33333/{\mbox{\boldmath${\scriptstyle\frac{\sqrt{1-{\beta}^2}}{\beta}}$}}
		}
		       \draw[yshift=\y cm] (1pt,0pt) -- (-1pt,0pt) node[left,fill=white]
		      {$\ytext$};
		    \end{scope}
		   \draw[arrows=->,style=important line, blue] (5/3,0) circle (4/3);
		\draw[color=blue, thick] [->] (1.66,4/3) -- (1.67,4/3){};
\end{tikzpicture}
\begin{center}
$\begin{smallmatrix}\frac{1}{\color{blue}\beta}\end{smallmatrix}+{\scriptstyle\frac{1+{\color{blue}
\beta}}{\color{blue}\beta}}{\color{red}g(s)}={\scriptstyle\frac{1}{\color{blue}\beta}}+
{\scriptstyle\frac{\sqrt{1-{\color{blue}\beta}^2}}{\color{blue}\beta}}\frac{s-a}{s+a}$
\end{center}
\end{minipage}\quad\quad\quad\begin{minipage}[b]{0.45\linewidth}
		\begin{tikzpicture}[scale=0.85,cap=round]
		    \tikzstyle{axes}=[]
		    \tikzstyle{important line}=[very thick]
		    \tikzstyle{information text}=[rounded corners,fill=red!10,inner sep=1ex]
		    \begin{scope}[style=axes]
		  \
		      \draw[->] (-0.1,0) -- (3.8,0) node[above] {Re};
		      \draw[->] (0,-1.5) -- (0,1.8) node[right] {Im};

		      \foreach \x/\xtext in {
		       1.6666666/{\mbox{\boldmath${\scriptstyle\frac{1}{\beta}}$}} 
		}
		\draw[xshift=\x cm] (0pt,1pt) -- (0pt,-1pt) node[below,fill=white]
			      {$\xtext$}; 

		      \foreach \y/\ytext in {
		    -1.33333/{\mbox{\boldmath$-{\scriptstyle\frac{\sqrt{1-{\beta}^2}}{\beta}}$}},
		     1.33333/{\mbox{\boldmath${\scriptstyle\frac{\sqrt{1-{\beta}^2}}{\beta}}$}}
		}
		       \draw[yshift=\y cm] (1pt,0pt) -- (-1pt,0pt) node[left,fill=white]
			      {$\ytext$};
		    \end{scope}
		   \draw[arrows=->,style=important line, blue] (5/3,0) circle (4/3);
		\draw[color=blue, thick] [->] (1.67,4/3) -- (1.66,4/3){};
		\end{tikzpicture}
\begin{center}
$\begin{smallmatrix}\frac{1}{\color{blue}\beta}\end{smallmatrix}-
{\scriptstyle\frac{1+{\color{blue}\beta}}{\color{blue}\beta}}{\color{red}g(s)}
={\scriptstyle\frac{1}{\color{blue}\beta}}-{\scriptstyle\frac
{\sqrt{1-{\color{blue}\beta}^2}}{\color{blue}\beta}}\frac{s-a}{s+a}$
\end{center}
\end{minipage}
\caption{Three {\em canonical} $\mathcal{HP}_{\color{blue}\beta}$ associated
with the same {\em canonical} $\mathcal{HB}_{\color{blue}\beta}$ function,
${\color{red}g(s)}$.
}
\label{Fig:Two_Maps}
$\T$
\end{figure}
\smallskip

Naturally, the above discussion also applies to the matrix-valued case:\\
For \mbox{$I_m\succ{\color{blue}T}\succcurlyeq 0$,} let ${\color{red}G(s)}$ be a
{\em canonical} $\mathcal{HB}_{\color{blue}T}$ function as in Eq.
\eqref{eq:Canonical_Quad_Def_HB_T}. By the last item of Proposition
\ref{Pn:Cayley_Functions}, whenever
\mbox{$F(s)=\mathcal{C}\left({\color{red}G(s)}\right)$,} it
is a {\em canonical} $\mathcal{HP}_{\color{blue}T}$ function.
\smallskip

Now when \mbox{${\color{blue}T}\succ 0$,} i.e. non-singular, from the same
${\color{red}G(s)}$, two additional {\em canonical } $\mathcal{HP}_{\color{blue}T}$
functions ${\Phi}_1(s)$ and ${\Phi}_2(s)$ can be obtained, 
\begin{equation}\label{eq:Map_HP_HB}
\begin{matrix}{\Phi}_1(s)&=&\begin{smallmatrix}{\color{blue}T}^{-1}+
(I_m+{\color{blue}T}^{-1})^{\frac{1}{2}}\end{smallmatrix}{\color{red}G(s)}
\begin{smallmatrix}(I+{\color{blue}T}^{-1})^{\frac{1}{2}}\end{smallmatrix}
\\~\\{\Phi}_2(s)&=&\begin{smallmatrix}{\color{blue}T}^{-1}-(I_m+
{\color{blue}T}^{-1})^{\frac{1}{2}}\end{smallmatrix}{\color{red}G(s)}
\begin{smallmatrix}(I+{\color{blue}T}^{-1})^{\frac{1}{2}}
\end{smallmatrix}.\end{matrix}
\end{equation}
Indeed, substituting these ${\Phi}_1$ and ${\Phi}_2$  in Eq. \eqref{eq:Canonical_HP_T}
reveals that ${\color{red}G(s)}$ satisfies Eq. \eqref{eq:Canonical_Quad_Def_HB_T}.
\smallskip

For the converse direction note that in both cases
\[
\begin{matrix}
{\color{red}G(s)}
&=&
\begin{smallmatrix}(I_m+{\color{blue}T}^{-1})^{-\frac{1}{2}}\end{smallmatrix}
\left({\Phi}_1(s)-\begin{smallmatrix}{\color{blue}T}^{-1}\end{smallmatrix}\right)
\begin{smallmatrix}(I_m+{\color{blue}T}^{-1})^{-\frac{1}{2}}\end{smallmatrix}
\\~\\
{\color{red}G(s)}
&=& 
\begin{smallmatrix}(I_m+{\color{blue}T}^{-1})^{-\frac{1}{2}}\end{smallmatrix}
\left(\begin{smallmatrix}{\color{blue}T}^{-1}\end{smallmatrix}-{\Phi}_2(s)\right)
\begin{smallmatrix}(I_m+{\color{blue}T}^{-1})^{-\frac{1}{2}}\end{smallmatrix},
\end{matrix}
\]
${\color{red}G(s)}$ is a {\em canonical} $\mathcal{HB}_{\color{blue}T}$ function.
\smallskip

An affine-linear map, quite similar to the one in Eq. \eqref{eq:Map_HP_HB}, will
be exploited in the next subsection.

\subsection{Parametrization of {\em Canonical} $\mathcal{HP}_{\color{blue}T}$ 
Rational Functions}
\label{Subsec:B_and_HP_T}

In Eq. \eqref{eq:Map_HP_HB} we presented an {\em affine}-linear map between {\em canonical}
$\mathcal{HP}_{\color{blue}T}$ and {\em canonical} $\mathcal{HB}_{\color{blue}T}$
functions. In the current subsection, we modify it to an {\em affine}-linear map between
{\em canonical} $\mathcal{HP}_{\color{blue}T}$ and {\em canonical} $\mathcal{B}$
functions. Here are the details.
\smallskip

Recall that in Eq. \eqref{eq:Def_Canoinal_Quad_HP_W}, {\em canonical}
$\mathcal{HP}_{\color{blue}T}$ functions, with $I_m\succ{\color{blue}T}\succcurlyeq 0$,
are described as all $m\times m$-valued $F(s)$, satisfying
\begin{equation}\label{eq:Quadratic_HP_Delta_Again}
\left(\begin{smallmatrix}F(s)\\~\\I_m\end{smallmatrix}\right)^*\left(
\begin{smallmatrix}-{\color{blue}T}&&~~I_m\\~\\~~I_m&&-{\color{blue}T}\end{smallmatrix}
\right)\left(\begin{smallmatrix}F(s)\\~\\I_m\end{smallmatrix}\right)~~\left\{
\begin{smallmatrix}\succcurlyeq 0&&\forall s\in\C_R
\\~\\=0&&\forall s\in{i}\R.\end{smallmatrix}\right.
\end{equation}
Recall also that in Eq.  \eqref{eq:Def_Canonical_B} {\em canonical} $\mathcal{B}$
functions were described as all $m\times m$-valued $G_o(s)$ satisfying
\begin{equation}\label{eq:Canonical_Bounded}
\left(\begin{smallmatrix}G_o(s)\\~\\I_m\end{smallmatrix}\right)^*\left(
\begin{smallmatrix}-I_m&&~0\\~\\~~0&&~I_m\end{smallmatrix}\right)\left(
\begin{smallmatrix}G_o(s)\\~\\I_m\end{smallmatrix}\right)~~\left\{
\begin{smallmatrix}\succcurlyeq 0&&s\in\C_R\\~\\=0&&s\in{i}\R.\end{smallmatrix}\right.
\end{equation}
The following will turn to be useful.

\begin{La}\label{La:Affine_G_o_F}
Let $F(s)$ and $G_o(s)$ be a pair of $m\times m$-valued functions satisfying, for a
prescribed \mbox{$I_m\succ{\color{blue}T}\succ 0$,} 
\begin{equation}\label{eq:Affine_G_o_F}
\begin{matrix}G_o(s)&=&
\begin{smallmatrix}{\color{blue}T}^{\frac{1}{2}}\end{smallmatrix}\left(F(s)-
\begin{smallmatrix}{\color{blue}T}^{-1}\end{smallmatrix}\right)
\begin{smallmatrix}({\color{blue}T}^{-1}-{\color{blue}T})^{-\frac{1}{2}}
\end{smallmatrix}\\~\\F(s)&=&\begin{smallmatrix}{\color{blue}T}^{-1}\end{smallmatrix}
+\begin{smallmatrix}{\color{blue}T}^{-\frac{1}{2}}\end{smallmatrix}
G_o(s)\begin{smallmatrix}({\color{blue}T}^{-1}-{\color{blue}T})^{\frac{1}{2}}
\end{smallmatrix}.\end{matrix}
\end{equation}
Then, $F(s)$ is a {\em canonical} $\mathcal{HP}_{\color{blue}T}$ function, if and
only if, $G_o(s)$ is a {\em canonical} $\mathcal{B}$ function, see Eq.
\eqref{eq:Def_Canonical_B}
\end{La}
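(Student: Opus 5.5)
The plan is to follow the same quadratic-form bookkeeping used in the proof of Proposition \ref{Pn:Cayley_Functions}: write the affine map in Eq. \eqref{eq:Affine_G_o_F} as a constant congruence acting on the ``graph'' vector $\left(\begin{smallmatrix}F\\ I_m\end{smallmatrix}\right)$. The goal is to show that the Hyper-Positive form of $F$ equals a positive-definite constant congruence of the Bounded form of $G_o$. Since congruence by a fixed invertible matrix preserves both ``$\succcurlyeq 0$'' and ``$=0$'' pointwise, this gives both directions at once, on $\C_R$ and on $i\R$. The two formulas in Eq. \eqref{eq:Affine_G_o_F} are inverse to each other, because $T^{\frac12}$ and $(T^{-1}-T)^{\frac12}$ are invertible when $I_m\succ T\succ 0$. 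Hence one identity suffices.

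First, write $F=T^{-1}+T^{-\frac12}G_oS$ with $S:=(T^{-1}-T)^{\frac12}$, a positive definite matrix that commutes with $T$. Then
\[
\left(\begin{smallmatrix}F\\ I_m\end{smallmatrix}\right)=\left(\begin{smallmatrix}T^{-\frac12}&T^{-1}S^{-1}\\ 0&S^{-1}\end{smallmatrix}\right)\left(\begin{smallmatrix}G_o\\ I_m\end{smallmatrix}\right)S .
\]
Second, compute directly
\[
F+F^*-T-F^*TF .
\]
Substituting gives
\[
F+F^*=2T^{-1}+T^{-\frac12}G_oS+SG_o^*T^{-\frac12},
\]
\[
F^*TF=T^{-1}+T^{-\frac12}G_oS+SG_o^*T^{-\frac12}+SG_o^*G_oS .
\]
Here the cross terms simplify because $T^{-1}\,T\,T^{-\frac12}=T^{-\frac12}$. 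The linear terms cancel, leaving
\[
F+F^*-T-F^*TF=T^{-1}-T-SG_o^*G_oS=S\,(I_m-G_o^*G_o)\,S ,
\]
since $S^2=T^{-1}-T$.

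Third, conclude. Because $S$ is constant and invertible, $S(I_m-G_o^*G_o)S\succcurlyeq 0$ holds if and only if $I_m-G_o^*G_o\succcurlyeq 0$, and it vanishes if and only if $I_m-G_o^*G_o$ vanishes. Applied for $s\in\C_R$ and for $s\in i\R$, this gives exactly the equivalence between Eq. \eqref{eq:Quadratic_HP_Delta_Again} and Eq. \eqref{eq:Canonical_Bounded}. Analyticity and rationality also transfer, because the map is affine with constant coefficients.

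The only delicate point is the cancellation of the cross terms. It relies on $T$, $T^{\pm\frac12}$ and $S$ all being functions of $T$, so that they commute; this is where the hypothesis $T\succ 0$ (not merely $\succcurlyeq 0$) is needed. I would verify carefully that the placement of $S$ on the right of $G_o$ in Eq. \eqref{eq:Affine_G_o_F} is consistent with the right-$\mathcal{HP}_T$ form $F^*TF$. It is, because $T$ sits between $F^*$ and $F$, so $S$ appears on the outside of the product and factors out symmetrically.
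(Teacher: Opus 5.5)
Your proof is correct and is essentially the paper's argument. The paper packages the affine map as a constant block matrix $V$ with $V^*\left(\begin{smallmatrix}-I_m&0\\0&I_m\end{smallmatrix}\right)V=\left(\begin{smallmatrix}-T&I_m\\I_m&-T\end{smallmatrix}\right)$, which is exactly your identity $F+F^*-T-F^*TF=S(I_m-G_o^*G_o)S$ with $S=(T^{-1}-T)^{\frac12}$, and it concludes with the same congruence argument on $\C_R$ and $i\R$. One small correction to your closing remark: the cancellation uses only $T^{-1}T=I_m$ and $T^{-\frac12}TT^{-\frac12}=I_m$, not commutativity of $S$ with $T$; the hypothesis $0\prec T\prec I_m$ is needed simply so that $T^{-\frac12}$ and $S^{-1}$ exist.
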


{\bf Proof :}~ We first find it convenient to cast the affine-linear relations in the
claim, in quadratic form, 
\begin{equation}\label{eq:Quadratic_G_function_of_F}
\left(\begin{smallmatrix}G_o(s)\\~\\I_m\end{smallmatrix}\right)=\underbrace{\left(
\begin{smallmatrix}{\color{blue}T}^{\frac{1}{2}}&&-{\color{blue}T}^{-\frac{1}{2}}
\\~\\0&&({\color{blue}T}^{-1}-{\color{blue}T})^{\frac{1}{2}}\end{smallmatrix}
\right)}_{V}\left(\begin{smallmatrix}F(s)\\~\\I_m\end{smallmatrix}\right)
\begin{smallmatrix}({\color{blue}T}^{-1}-{\color{blue}T})^{-\frac{1}{2}}\end{smallmatrix}
\end{equation}
and
\[
\left(\begin{smallmatrix}F(s)\\~\\I_m\end{smallmatrix}\right)=\underbrace{\left(
\begin{smallmatrix}{\color{blue}T}^{-\frac{1}{2}}&&{\color{blue}T}^{-1}(
{\color{blue}T}^{-1}-{\color{blue}T})^{-\frac{1}{2}}\\~\\0&&({\color{blue}T}^{-1}
-{\color{blue}T})^{-\frac{1}{2}}\end{smallmatrix}\right)}_{V^{-1}}\left(
\begin{smallmatrix}G_o(s)\\~\\I_m\end{smallmatrix}\right)\begin{smallmatrix}(
{\color{blue}T}^{-1}-{\color{blue}T})^{\frac{1}{2}}\end{smallmatrix}.
\]
Now, to verify the claim note that
\[
\begin{matrix}\underbrace{\left(\begin{smallmatrix}G_o(s)\\~\\I_m\end{smallmatrix}\right)^*
\overbrace{\left(\begin{smallmatrix}-I_m&&0\\~\\0&&I_m\end{smallmatrix}\right)}^U\left(
\begin{smallmatrix}G_o(s)\\~\\I_m\end{smallmatrix}\right)}_{\rm See~ Eq.~
\eqref{eq:Canonical_Bounded}}&=&\underbrace{\begin{smallmatrix}({\color{blue}T}^{-1}-
{\color{blue}T})^{-\frac{1}{2}}\end{smallmatrix}\left(\begin{smallmatrix}F(s)\\~\\I_m
\end{smallmatrix}\right)^*\begin{smallmatrix}V^*\end{smallmatrix}}_{{\rm by~Eq.~
\eqref{eq:Quadratic_G_function_of_F}:}~~\left(\begin{smallmatrix}G_o(s)\\~\\I_m
\end{smallmatrix}\right)^*}\overbrace{\left(\begin{smallmatrix}-I_m&&0\\~\\0&&I_m
\end{smallmatrix}\right)}^{U}\underbrace{\begin{smallmatrix}V\end{smallmatrix}\left(
\begin{smallmatrix}F(s)\\~\\I_m\end{smallmatrix}\right)\begin{smallmatrix}(
{\color{blue}T}^{-1}-{\color{blue}T})^{-\frac{1}{2}}\end{smallmatrix}}_{{\rm by~Eq.~
\eqref{eq:Quadratic_G_function_of_F}:}~~\left(\begin{smallmatrix}G_o(s)\\~\\I_m
\end{smallmatrix}\right)}\\~\\~&=&\begin{smallmatrix}({\color{blue}T}^{-1}-
{\color{blue}T})^{-\frac{1}{2}}\end{smallmatrix}\underbrace{\left(\begin{smallmatrix}F(s)
\\~\\I_m\end{smallmatrix}\right)^*\overbrace{\left(\begin{smallmatrix}-
{\color{blue}T}&&~~I_m\\~\\~~I_m&&-{\color{blue}T}\end{smallmatrix}\right)}^{V^*UV}\left(
\begin{smallmatrix}F(s)\\~\\I_m\end{smallmatrix}\right)}_{\rm See~ Eq.~\eqref
{eq:Quadratic_HP_Delta_Again}}\begin{smallmatrix}({\color{blue}T}^{-1}-{\color{blue}T})^{
-\frac{1}{2}}\end{smallmatrix}.\end{matrix}
\]
With respect to Eqs. \eqref{eq:Quadratic_HP_Delta_Again}, \eqref{eq:Canonical_Bounded},
the left-hand side is in $\overline{\mathbf P}_m$, if and only if, the right-hand side is.
\qed
\smallskip

We next parametrize all {\em canonical} $\mathcal{B}$ functions. To this end,
recall that if $\Pi\in\C^{m\times m}$ is an orthogonal projection, i.e.
\[
{\Pi}^*={\Pi}={\Pi}^2,
\]
of rank $k$, for some $k\in[1,~m]$, then if can be written as
\[
\Pi=v_1{v_1}^*+~\ldots~+v_k{v_k}^*\quad\quad {\rm where}~~{v_j}^*v_l=
\left\{\begin{matrix}1&&j=l\\0&&j\not=l.\end{matrix}\right.
\]

\begin{La}\label{La:Blaschke}
Let $G_o(s)$ be an arbitrary $m\times m$-valued {\em canonical} $\mathcal{B}$ function,
see Eq. \eqref{eq:Def_Canonical_B}. Then, it can be written as
\begin{equation}\label{eq:Blaschke_G_o}
G_o(s)=\prod\limits_{j=1}^k\begin{smallmatrix}\left(I_m-\frac{2}{1+{\psi}_j(s)}\cdot
v_j{v_j}^*\right)\end{smallmatrix}\quad with\quad k~~{\rm a~parameter},
\end{equation}
and ${\psi}_j(s)$ are scalar $\mathcal{PO}$ functions (see Eq.\eqref{eq:Def_PO}).
\end{La}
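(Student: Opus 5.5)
The plan is to prove this by induction on the McMillan degree of $G_o(s)$, following the classical Potapov factorization of rational inner matrix functions. Each factor in Eq. \eqref{eq:Blaschke_G_o} is an elementary Blaschke--Potapov factor, as the following computation shows. Since $1-\frac{2}{1+\psi_j}=\frac{\psi_j-1}{\psi_j+1}=-\mathcal{C}(\psi_j)$, we have
\[
I_m-\tfrac{2}{1+\psi_j(s)}v_jv_j^*=(I_m-v_jv_j^*)+b_j(s)\,v_jv_j^*,\qquad b_j(s):=-\mathcal{C}(\psi_j(s)).
\]
By Proposition \ref{Pn:Cayley_Functions}, $\psi_j\in\mathcal{PO}$ exactly when $b_j$ is a scalar canonical $\mathcal{B}$ function, i.e.\ a scalar rational inner function. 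Conversely, for a given scalar inner $b$, one recovers $\psi=\mathcal{C}(-b)=\frac{1+b}{1-b}$. So the task is to factor $G_o$ into factors of the form $(I_m-vv^*)+b(s)vv^*$.

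First I would record that, by Eq. \eqref{eq:Def_Canonical_B}, a rational canonical $\mathcal{B}$ function is analytic and contractive on $\C_R$ (poles only in $\C_L$) and unitary on $i\R$. By the reflection identity $G_o(s)\bigl(G_o(-s^*)\bigr)^*=I_m$, every zero of $\det G_o$ is a reflection of a pole, hence lies in $\C_R$.

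\emph{Base case (degree zero).} Here $G_o\equiv U$ is a constant unitary matrix. Write its spectral decomposition $U=\sum_j\lambda_jv_jv_j^*$ with $|\lambda_j|=1$. Since the projections $v_jv_j^*$ commute, $U=\prod_j\bigl((I_m-v_jv_j^*)+\lambda_jv_jv_j^*\bigr)$. Eigenvalues $\lambda_j=1$ are simply dropped. For $\lambda_j\neq 1$, take the constant $\psi_j=\mathcal{C}(-\lambda_j)=\frac{1+\lambda_j}{1-\lambda_j}\in i\R$, which is a degree-zero $\mathcal{PO}$ function.

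\emph{Inductive step (degree $n\ge1$).} Since $\det G_o\not\equiv$ const, it has a zero $w\in\C_R$ (not on $i\R$, where $G_o$ is unitary). Choose a unit vector $v$ with $v^*G_o(w)=0$, and set $b_w(s)=\frac{s-w}{s+w^*}$, the scalar degree-one inner function. Its associated $\psi(s)=\frac{1+b_w}{1-b_w}=\frac{2s+w^*-w}{w+w^*}$ is a degree-one $\mathcal{PO}$ function. Let $E(s)=(I_m-vv^*)+b_w(s)vv^*$ and $G_1:=E^{-1}G_o=\bigl((I_m-vv^*)+b_w^{-1}vv^*\bigr)G_o$. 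The only possible new pole of $G_1$ in $\C_R$ is at $w$, and it cancels because $v^*G_o(w)=0$; so $G_1$ is analytic in $\overline{\C_R}$. It is unitary on $i\R$ because both $E$ and $G_o$ are. Applying the maximum principle to $\|G_1(s)\|$ then gives contractivity in $\C_R$, so $G_1$ is again canonical $\mathcal{B}$. Iterating $G_o=E_1G_1=E_1E_2G_2=\cdots$ and ending with the constant case yields Eq. \eqref{eq:Blaschke_G_o}.

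The main obstacle is showing that the McMillan degree actually drops, $\deg G_1=\deg G_o-1$, so that the induction terminates. I would handle this through the determinant. On the one hand $\det G_1=\det G_o/b_w$. On the other hand, for a rational inner function the McMillan degree equals the number of zeros of $\det$ in $\C_R$ counted with multiplicity (equivalently, the degree of the scalar inner function $\det G_o$). Removing the zero at $w$ therefore lowers the degree by exactly one. If this determinant--degree identity is not taken as known, I would fall back on the state-space version: use the KYP equality for canonical $\mathcal{B}$ functions and show that the factor $E$ splits off one state from a minimal realization.
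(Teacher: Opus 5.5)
Your proof is correct, and it establishes a different direction from the one the paper's argument establishes.

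The paper's proof uses $\mathcal{C}(\mathcal{PO})=$ canonical $\mathcal{B}$ to check that each elementary factor $I_m-\frac{2}{1+\psi(s)}vv^*$ is canonical $\mathcal{B}$, and then invokes closure under products. That shows every product of the form \eqref{eq:Blaschke_G_o} is a canonical $\mathcal{B}$ function. The existence of such a factorization for an arbitrary $G_o$, which is what the lemma asserts, is left to the Blaschke--Potapov theory mentioned in the remark that follows.

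Your zero-extraction induction proves exactly that missing direction. Its steps are: take a left null vector $v$ of $G_o(w)$, split off the factor $(I_m-vv^*)+b_w vv^*$, apply the maximum principle to $E^{-1}G_o$, and finish with the spectral factorization of the remaining constant unitary. It also gives more: every $\psi_j$ can be chosen of degree at most one, namely $\frac{s-i\,{\rm Im}\,w}{{\rm Re}\,w}$ or an imaginary constant $i\cot(\theta/2)$, with $k\le\deg G_o+m$. The paper's direction is what turns the formula into a parametrization. It also follows from your identity $b_j=-\mathcal{C}(\psi_j)$ together with closure under products.

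Two remarks on your argument:

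\emph{Complex coefficients.} Your $\psi_j$ have complex coefficients. This is allowed by Eq. \eqref{eq:Def_PO} as written, and it is in fact unavoidable. With real-coefficient $\psi_j$, the scalar constant $G_o\equiv e^{i\theta}$ with $\theta\notin\{0,\pi\}$ has no such representation. A product of scalar inner factors is constant only if each factor is, and the only real constant in $\mathcal{PO}$ is $0$, which gives factors $-1$.

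\emph{Avoiding the determinant--degree identity.} You can run the induction on the number of zeros of $\det G_o$ in $\C_R$ instead of on McMillan degree. Since $\det(E^{-1}G_o)=\det G_o/b_w$, each step lowers this count by one. When the count is zero, $\det G_o$ is a unimodular constant, so $G_o^{-1}$ is analytic in $\overline{\C_R}$. Being $(G_o(-s^*))^*$, it is also analytic in $\C_L$. It is therefore an entire rational function with a finite limit at infinity, hence constant, so $G_o$ is a constant unitary.
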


{\bf Proof :}~ We here rely on two facts:\\
(i) From Proposition \ref{Pn:Cayley_Functions} one has that
\mbox{$\mathcal{C}\left(\mathcal{PO}\right)={\rm canonical}~\mathcal{B}$.}\\
(ii)~ Since the family of $m\times m$-valued {\em canonical} $\mathcal{B}$ function
is closed under product among its elements (see Eq. \eqref{eq:Def_B}), it implies
that whenever (for some $k$) ${\Gamma}_1(s)$, $\ldots$, ${\Gamma}_k(s)$, are {\em
canonical} $\mathcal{B}$ functions, then so is
their product \mbox{$G_o(s)=\prod\limits_{j=1}^k{{\Gamma}_j(s)}$.} 
\smallskip

We next explore the nature of factors of the form of ${\Gamma}_1$, $\ldots$,
${\Gamma}_k~$.
\smallskip

First, if $\psi(s)$ is a scalar $\mathcal{PO}$ function, then so is
$\frac{1}{\psi(s)}~$. Now,
\[
g(s):=\mathcal{C}(\begin{smallmatrix}\frac{1}{\psi(s)}\end{smallmatrix})=
\begin{smallmatrix}\frac{1-\frac{1}{\psi(s)}}{1+\frac{1}{\psi(s)}}\end{smallmatrix}
=\begin{smallmatrix}1-\frac{2}{1+\psi(s)}\end{smallmatrix}~,
\]
is a (scalar) {\em canonical} $\mathcal{B}$ function.\quad
Similarly, with the same $\psi(s)$,
\begin{equation}\label{eq:Gamm_Prototype}
\Gamma(s)=\begin{smallmatrix}I_m-\frac{2}{1+{\psi}(s)}\cdot vv^*
\end{smallmatrix}\quad{\rm where}\quad v\in\C^m,~~v^*v=1,
\end{equation}
is a $m\times m$-valued {\em canonical} $\mathcal{B}$
function, sharing the same McMillan degree as $\psi(s)$.
\smallskip

To verify that directly note that for arbitrary $u\in\C^m$,
\[
u^*(I_m-{\Gamma}^*\Gamma)u=\underbrace{\begin{smallmatrix}\frac{2(\psi(s)+
{\psi(s)}^*)}{(1+\psi(s))(1+{\psi(s)}^*)}\end{smallmatrix}}_{a(s)}\cdot
\underbrace{\begin{smallmatrix}|u^*v|^2\end{smallmatrix}}_{\geq 0}~.
\]
Now since $\psi(s)$ is a $\mathcal{PO}$ function, it implies that
\[
\begin{smallmatrix}1\geq a(s):=\frac{2(\psi(s)+{\psi(s)}^*)}
{(1+\psi(s))(1+{\psi(s)}^*)}>0&&s\in\C_R\\~\\~~~a(s):=\frac{2(\psi(s)+
{\psi(s)}^*)}{(1+\psi(s))(1+{\psi(s)}^*)}=0&&s\in{i}\R.\end{smallmatrix}
\]
Thus, one can conclude that $\Gamma(s)$ is indeed a $m\times m$-valued
{\em canonical} $\mathcal{B}$
function (see Eq.  \eqref{eq:Canonical_Bounded}). Furthermore, one has that
\[
u^*u\geq |u^*v|^2\geq 0,
\]
and the left-hand side holds with equality, if and only if \mbox{$u=cv$,} for
some $c\in\C$ (i.e. $u$ and $v$ are linearly dependent), and
the right-hand side holds with equality, if and only if \mbox{$u^*v=0$,}
(i.e. $u$ is orthogonal to $v$).
\smallskip

When Eq. \eqref{eq:Gamm_Prototype} is extended to
\mbox{$G_o(s)=\prod\limits_{j=1}^k{{\Gamma}_j(s)}$,} 
similar argument holds, so the construction is complete.
\qed
\smallskip

\begin{Rk}
{\rm
{\bf a.}~ 
The description of $G_o(s)$ in Eq. \eqref{eq:Blaschke_G_o} involves the scalar
$\mathcal{PO}$ functions ${\psi}_1(s)$, $\ldots$, ${\psi}_k(s)$. Recall now that due
to the Foster parametrization (see e.g. \cite[Eq. (9), Ch. 5]{Belev1968}), a scalar
$\mathcal{PO}$ functions can always be written as, 
\[
a_os+\frac{b_o}{s}+\sum\limits_j(a_js+\frac{b_j}{s})^{-1}\quad\quad
a_o, b_o\geq 0,~~a_j, b_j>0.
\]
{\bf b.}~ The parametrization of $G_o(s)$ in Lemma \ref{La:Blaschke} is of
the nature of to the Blaschke product description of unitary functions. For
example, see
the discussion in \cite[Section 2]{ag1}.
}
$\T$
\end{Rk}
\smallskip

One can now combine Eq. \eqref{eq:Quadratic_G_function_of_F} together with
Eq. \eqref{eq:Blaschke_G_o} to formulate the main result of this section, a
systematic description of all $m\times m$-valued {\em canonical}
$\mathcal{HP}_{\color{blue}T}$ functions, where
\mbox{$I_m\succ{\color{blue}T}\succ 0$.}

\begin{Pn}
Let $F(s)$ be an arbitrary $m\times m$-valued {\em canonical}
$\mathcal{HP}_{\color{blue}T}$,
\mbox{$I_m\succ{\color{blue}T}\succ 0$,} function. Then, it can be written as
\[
F(s)=\begin{smallmatrix}{\color{blue}T}^{-\frac{1}{2}}\end{smallmatrix}\overbrace{
\left(\begin{smallmatrix}\prod\limits_{j=1}^k\left(I_m-\frac{2}{1+{\psi}_j(s)}
\cdot{v}_j{v_j}^*\right)\end{smallmatrix}\right)}^{G_o(s)}\begin{smallmatrix}(
{\color{blue}T}^{-1}-{\color{blue}T})^{\frac{1}{2}}\end{smallmatrix}+
\begin{smallmatrix}{\color{blue}T}^{-1}\end{smallmatrix}\quad with
\quad\begin{smallmatrix}k~~parameter \\~\\ v_j\in\C^m\\~\\
{v_j}^*v_j=1\end{smallmatrix}
\]
and ${\psi}_j(s)$ are scalar $\mathcal{PO}$ functions.
\end{Pn}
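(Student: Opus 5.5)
The plan is to combine Lemma \ref{La:Affine_G_o_F} with Lemma \ref{La:Blaschke}, composing the affine-linear correspondence with the product parametrization of canonical $\mathcal{B}$ functions.

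\textbf{Step 1.} Start from an arbitrary canonical $\mathcal{HP}_{\color{blue}T}$ function $F(s)$ with $I_m\succ{\color{blue}T}\succ 0$. Since ${\color{blue}T}$ is non-singular and $I_m\succ{\color{blue}T}$, the matrices ${\color{blue}T}^{\pm\frac12}$ and $({\color{blue}T}^{-1}-{\color{blue}T})^{\pm\frac12}$ are well defined and positive definite. Define
\[
G_o(s):={\color{blue}T}^{\frac12}\left(F(s)-{\color{blue}T}^{-1}\right)({\color{blue}T}^{-1}-{\color{blue}T})^{-\frac12},
\]
which is the first line of Eq. \eqref{eq:Affine_G_o_F}. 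A direct algebraic check shows that the second line of Eq. \eqref{eq:Affine_G_o_F} also holds, namely $F={\color{blue}T}^{-1}+{\color{blue}T}^{-\frac12}G_o({\color{blue}T}^{-1}-{\color{blue}T})^{\frac12}$; equivalently, the matrix $V$ in Eq. \eqref{eq:Quadratic_G_function_of_F} is invertible with the displayed inverse. By the ``only if'' direction of Lemma \ref{La:Affine_G_o_F}, $G_o$ is then a canonical $\mathcal{B}$ function.

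\textbf{Step 2.} Apply Lemma \ref{La:Blaschke} to $G_o$. This writes $G_o(s)=\prod_{j=1}^k\left(I_m-\frac{2}{1+\psi_j(s)}v_jv_j^*\right)$, where the $v_j$ are unit vectors and the $\psi_j$ are scalar $\mathcal{PO}$ functions. Substituting this product into the second line of Eq. \eqref{eq:Affine_G_o_F} gives exactly the claimed formula, which completes the argument.

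\textbf{Main obstacle.} The weak point is Step 2, because Lemma \ref{La:Blaschke} as proved above really establishes the converse direction: every such product is a canonical $\mathcal{B}$ function. The present statement needs the representation direction, i.e.\ that every canonical $\mathcal{B}$ function admits such a factorization. I would take Lemma \ref{La:Blaschke} as given, since it is stated earlier in the paper. If a justification is wanted, I would argue by induction on McMillan degree, in the spirit of Potapov's factorization of $J$-inner (here inner) rational matrix functions. Pick a zero $s_0\in\C_L$ of $\det G_o$ (equivalently a pole in $\C_L$ mirrored across $i\R$) together with a kernel vector $v$ of the relevant residue. Dividing off a degree-one elementary factor $I_m-\frac{2}{1+\psi}vv^*$, with $\psi$ a degree-one $\mathcal{PO}$ function chosen to vanish appropriately at $s_0$, leaves a canonical $\mathcal{B}$ function of lower degree. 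Any constant unitary remainder would either be absorbed or noted as a caveat to the statement.
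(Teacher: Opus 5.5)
Your argument is the paper's own: there the proposition is obtained by combining the affine correspondence of Lemma \ref{La:Affine_G_o_F} (in the form \eqref{eq:Quadratic_G_function_of_F}) with the product representation \eqref{eq:Blaschke_G_o} of Lemma \ref{La:Blaschke}, exactly as in your Steps 1--2. Your caveat is also well founded, because the paper's proof of Lemma \ref{La:Blaschke} only checks that such products are canonical $\mathcal{B}$, so the representation direction actually rests on Potapov's factorization; three small corrections to your sketch: the zeros of $\det G_o$ lie in $\C_R$ (not $\C_L$), the elementary factor has determinant $\frac{\psi-1}{\psi+1}$ so one needs $\psi(s_0)=1$ (e.g.\ $\psi(s)=(s-i\,{\rm Im}\,s_0)/{\rm Re}\,s_0$), and a constant unitary remainder needs no caveat, being a product of factors $I_m-\frac{2}{1+\psi_j}w_jw_j^*$ with $w_j$ its orthonormal eigenvectors and $\psi_j$ purely imaginary constants (which are $\mathcal{PO}$).
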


\section{Structural Properties of 
{\em Canonical} $\mathcal{HP}_{\color{blue}T}$ Rational Functions}
\label{Sec:Structural_Prproperties}
\setcounter{equation}{0}

\subsection{Proof of part B of Theorem \ref{Tm:Set_Of_HP_Functions}}
\label{Subsec:Proof_of_Theorem_1.7_B}

(i)~ For $j=0,~1$, let $F_j(s)$ be a pair of {\em canonical}
$\mathcal{HP}_{\color{blue}T}$
functions, then one can re-write Eq. \eqref{eq:Def_Canoinal_Quad_HP_W} as,
\[
F_j(s)+{F_j(s)}^*=\begin{smallmatrix}{\Delta}_j\end{smallmatrix}+
\begin{smallmatrix}{\color{blue}T}\end{smallmatrix}
+{F_j(s)}^*\begin{smallmatrix}{\color{blue}T}\end{smallmatrix}F_j(s)
\quad\quad\begin{smallmatrix}{\Delta}_j\end{smallmatrix}=
\begin{smallmatrix}{\Delta}_j(s)\end{smallmatrix}=\left\{\begin{smallmatrix}
\succcurlyeq 0&&s\in\C_R\\~\\=0&&s\in{i}\R.\end{smallmatrix}\right.
\]
For ${\scriptstyle\alpha}\in[0,~1]$ denote 
\mbox{$\begin{smallmatrix}{\Delta}_{\alpha}\end{smallmatrix}
:=\begin{smallmatrix}\alpha{\Delta}_1\end{smallmatrix}+
\begin{smallmatrix}(1-\alpha){\Delta}_0\end{smallmatrix}$} and
\mbox{$\begin{smallmatrix}F_{\alpha}\end{smallmatrix}:=\begin{smallmatrix}\alpha{F}_1
\end{smallmatrix}+\begin{smallmatrix}(1-\alpha){F}_0\end{smallmatrix}$.} Thus, a
straightforward computation yields, for all $s\in\overline{\C}_R$, and all 
\mbox{$\begin{smallmatrix}\alpha\end{smallmatrix}\in[0, 1]$,}
\[
F_{\alpha}(s)+{F_{\alpha}(s)}^*={\color{blue}\begin{smallmatrix}T\end{smallmatrix}}
+{F_{\alpha}(s)}^*{\color{blue}\begin{smallmatrix}T\end{smallmatrix}}{F_{\alpha}(s)}
+
\underbrace{\begin{smallmatrix}{\Delta}_{\alpha}(s)\end{smallmatrix}}_{\succcurlyeq 0}
+
\underbrace{\begin{smallmatrix}\alpha(1-\alpha)\end{smallmatrix}(F_0(s)-F_1(s))^*{
\color{blue}\begin{smallmatrix}T\end{smallmatrix}}(F_0(s)-F_1(s))}_{\succcurlyeq 0}.
\]
Consider first the case where $s\in{i}\R$: Note that 
\mbox{$\begin{smallmatrix}{\Delta}_{\alpha}(s)\end{smallmatrix}\equiv 0$,} for all 
\mbox{${\scriptstyle\alpha}\in[0,~1]$.} Thus,
\mbox{$F_{\alpha}(s)+{F_{\alpha}(s)}^*={\color{blue}\begin{smallmatrix}T\end{smallmatrix}}
+{F_{\alpha}(s)}^*{\color{blue}\begin{smallmatrix}T\end{smallmatrix}}{F_{\alpha}(s)}$,}
for all $s\in{i}\R$, if and only if, the condition in Eq.
\eqref{eq:Condition_Canonical_Convex}
is satisfied.
\smallskip

Else, \mbox{$F_{\alpha}(s)+{F_{\alpha}(s)}^*\succcurlyeq
{\color{blue}\begin{smallmatrix}T\end{smallmatrix}}
+{F_{\alpha}(s)}^*{\color{blue}\begin{smallmatrix}T\end{smallmatrix}}{F_{\alpha}(s)}$,}
for some $s\in{i}\R$, so indeed $F_{\alpha}(s)$ is not {\em canonical}.
\smallskip

Next, we address ourselves to the case where $s\in\C_R$. Then,
for ${\scriptstyle\alpha}\in(0,~1)$, one has that both
\mbox{$\begin{smallmatrix}{\Delta}_{\alpha}(s)\end{smallmatrix}\succcurlyeq 0$,}
~and~
\mbox{$\begin{smallmatrix}\alpha(1-\alpha)\end{smallmatrix}(F_0(s)-F_1(s))^*{
\color{blue}\begin{smallmatrix}T\end{smallmatrix}}(F_0(s)-F_1(s))\succcurlyeq 0$.}
Hence, one can find ${\color{cyan}\hat{T}}$,
\mbox{$I_m\succ{\color{cyan}\hat{T}}\succcurlyeq{\color{blue}T}$,} so that, for all
\mbox{$\begin{smallmatrix}\alpha\end{smallmatrix}\in(0, 1)$,}
\[
\begin{smallmatrix}{\Delta}_{\alpha}(s)\end{smallmatrix}
+\begin{smallmatrix}\alpha(1-\alpha)\end{smallmatrix}
(F_0(s)-F_1(s))^*{\color{blue}\begin{smallmatrix}T\end{smallmatrix}}(F_0(s)-F_1(s))
\succcurlyeq({\color{cyan}\begin{smallmatrix}\hat{T}\end{smallmatrix}}
-{\color{blue}\begin{smallmatrix}T\end{smallmatrix}})
+{F_{\alpha}(s)}^*({\color{cyan}\begin{smallmatrix}\hat{T}\end{smallmatrix}}
-{\color{blue}\begin{smallmatrix}T\end{smallmatrix}})F_{\alpha}(s).
\]
Thus, this part of the claim is established.
\bigskip

(ii)~ Multiply Eq. \eqref{eq:Def_Canoinal_Quad_HP_W} by $\left(F(s)^{-1}\right)^*$ and
$F(s)^{-1}$ from the left and from the right, respectively to obtain
\[
\begin{smallmatrix}\left(\begin{smallmatrix}I_m\\~\\F(s)^{-1}\end{smallmatrix}\right)^*
\left(\begin{smallmatrix}-{\color{blue}T}&&~~I_m\\~\\~~I_m&&-{\color{blue}T}\end{smallmatrix}
\right)\left(\begin{smallmatrix}I_m\\~\\F(s)^{-1}\end{smallmatrix}\right)
&\left\{\begin{smallmatrix}\succcurlyeq 0&&\forall s\in\C_R\\~\\=0&&\forall s\in{i}\R,
\end{smallmatrix}\right.\end{smallmatrix}
\]
which is equivalent to
\[
\begin{smallmatrix}\left(\begin{smallmatrix}F(s)^{-1}\\~\\I_m\end{smallmatrix}\right)^*
\left(\begin{smallmatrix}-{\color{blue}T}&&~~I_m\\~\\~~I_m&&
-{\color{blue}T}\end{smallmatrix}\right)
\left(\begin{smallmatrix}F(s)^{-1}\\~\\I_m\end{smallmatrix}\right)
&\left\{\begin{smallmatrix}\succcurlyeq 0&&\forall s\in\C_R\\~\\=0&&\forall s\in{i}\R,
\end{smallmatrix}\right.\end{smallmatrix}
\]
so this item is established.
\smallskip

(iii)~ 
For a scalar {\em canonical} $\mathcal{HP}_{\color{blue}\beta}$ function $f(s)$ one
has that
\[
\begin{smallmatrix}{\color{blue}\beta}\end{smallmatrix}=
\begin{smallmatrix}\frac{f+f^*}{1+|f|^2}
\end{smallmatrix}\quad\quad\quad\forall s\in{i}\R.
\]
Thus one can write $\forall s\in{i}\R$
\[
\begin{matrix}\begin{smallmatrix}{\color{blue}{\beta}_{\rm new}}\end{smallmatrix}&=&
\begin{smallmatrix}\frac{\frac{1}{2}(f+\frac{1}{f})+\frac{1}{2}(f+\frac{1}{f})^*}
{1+\left|\frac{1}{2}(f+\frac{1}{f})\right|^2}\end{smallmatrix}=
\begin{smallmatrix}\frac{\frac{1}{2}(f+\frac{f^*}{|f|^2})+\frac{1}{2}(f^*+\frac{f}{|f|^2})}
{1+\left|\frac{1}{2}(f+\frac{f^*}{|f|^2})\right|^2}\end{smallmatrix}=
\begin{smallmatrix}\frac{\frac{1}{2}(f+f^*)(1+\frac{1}{|f|^2})}
{1+\frac{1}{4}(|f|^2+\frac{(f)^2}{|f|^2}+\frac{(f^*)^2}{|f|^2}+
\frac{1}{|f|^2})}\end{smallmatrix}\\~\\~&=&\begin{smallmatrix}\frac{2(f+f^*)
(1+|f|^2)}{4|f|^2+|f|^4+(f)^2+(f^*)^2+1}\end{smallmatrix}=
\begin{smallmatrix}\frac{2(f+f^*)(1+|f|^2)}{(1+|f|^2)^2+(f+f^*)^2}
\end{smallmatrix}=\begin{smallmatrix}\frac{2\frac{f+f^*}{1+|f|^2}}
{1+(\frac{f+f^*}{1+|f|^2})^2}\end{smallmatrix}
=\begin{smallmatrix}\frac{2\beta}{1+{\beta}^2}\end{smallmatrix}
=\begin{smallmatrix}\left(\frac{1}{2}(\beta+\frac{1}{\beta})\right)^{-1}\end{smallmatrix},
\end{matrix}
\]
and the proof is complete.
\qed
\smallskip

\begin{Rk}\label{Rk:PO_not_Canonical}
{\rm
Following Remark \ref{Rk:PO_Canonical_HP_Delta} and the first two lines in Proposition
\ref{Pn:Cayley_Functions}, one may be tempted to say that $\mathcal{PO}$ functions play
the role of {\em canonical} $\mathcal{P}$ functions. However, the two cases differ. It
is only for \mbox{${\color{blue}T}=0$} (or \mbox{$\begin{smallmatrix}{\color{blue}\beta}
\end{smallmatrix}=0$}) that the boundary of the convex set $\mathcal{HP}_{\color{blue}T}$,
is convex by itself, see item (i) of part B of Theorem
\ref{Tm:Set_Of_HP_Functions}. Roughly, this can be pictorially viewed in Figure
\ref{Fig:Degree_One_HP}, by comparing the (convex) imaginary axis with the 
(non-convex) blue {\em circle}.
}
$\T$
\end{Rk}

\subsection{Convex Combination of {\em Canonical} $\mathcal{HP}_{\color{blue}T}$
Rational Functions}
\label{Subsec:Convex_Combination_Rational}

Item {\bf B.} (i) of Theorem \ref{Tm:Set_Of_HP_Functions} implies that if $F_0(s)$
and $F_1(s)$ is a pair of {\em canonical} $\mathcal{HP}_{\color{blue}T}$ functions,
then \mbox{$\forall {\scriptstyle\alpha}\in(0, 1)$,}
\mbox{$({\scriptstyle\alpha}F_1+{\scriptstyle(1-\alpha)}F_0)(s)$,} is a
(non-{\em canonical}) $\mathcal{HP}_{\color{blue}T}$ function. This subsection
focuses on this fact.
\smallskip

In principle some version of the converse statement holds as well: When
\mbox{$I_m\succ{\color{blue}T}\succ 0$,} an arbitrary
$\mathcal{HP}_{\color{blue}T}$ function can be written as a limit of a convex
combination of {\em canonical} $\mathcal{HP}_{\color{blue}T}$ functions. Note
however that this is true in the broader framework of the space
$\mathcal H(\mathbb C_+)$, functions analytic in the right open half-plane
endowed with the topology of uniform convergence on compact sets. Under this
topology $\mathcal H(\mathbb C_+)$ is a Fr\'echet space in which being compact
is equivalent to being bounded and closed; see e.g. \cite[p. 166]{cartan}. This
allows to apply the Krein-Milman theorem (see e.g. \cite[p. 13]{brezis},
\cite[p. 362-363]{yosida} for the latter) to the set $\mathcal{HP}_{\color{blue}T}$
functions. The details are beyond the
scope of the work.\begin{footnote} {The subset of rational functions is not
closed.}\end{footnote} For a related analysis in the setting
of realizations, see Subsection \ref{Subsec:Conex_Realizations}.
\smallskip

We start with a very simple case.
\smallskip

{\bf Proof of Proposition \ref{Pn:Convex_Combination_Degree_One} :}
\smallskip

To simplify the construction, substitute in Eq.  \eqref{eq:Canonical_Degree_One}
$b=a$ and \mbox{$\begin{smallmatrix}{\beta}\end{smallmatrix}=
\begin{smallmatrix}{\beta}_1\end{smallmatrix}$,}
to obtain the following convex combination of ${\phi}_1(s)$ and ${\phi}_2(s)$,
\[
f_{\alpha}(s):=\begin{smallmatrix}\alpha\end{smallmatrix}{\phi}_1(s)+
\begin{smallmatrix}(1-\alpha)\end{smallmatrix}{\phi}_2(s)
=\begin{smallmatrix}\frac{1}{{\beta}_1}\end{smallmatrix}+\begin{smallmatrix}
(2\alpha-1)\frac{\sqrt{1-{{\beta}_1}^2}}{{\beta}_1}\end{smallmatrix}\frac{s-a}{s+a}
\quad\quad\quad
\begin{smallmatrix}{\beta}_1\in(0,~1)\\~\\a>0\\~\\ \alpha\in[0,~1].\end{smallmatrix}
\]
Without loss of generality assume that
\mbox{$\begin{smallmatrix}{\beta}_2\end{smallmatrix}\geq\begin{smallmatrix}{\beta}_1
\end{smallmatrix}$.} Then using the notation of Eq. \eqref{eq:HP_Deg_One}),
\[
f_{\alpha}(s)=\left\{\begin{matrix}\tilde{\phi}_1(s)&{\rm when}&
\begin{smallmatrix}\alpha\end{smallmatrix}=\begin{smallmatrix}\frac{1}{2}
\end{smallmatrix}(1+\begin{smallmatrix}\frac{{\beta}_1}{{\beta}_2}
\frac{\sqrt{1-{\beta}_2^2}}{\sqrt{1-{\beta}_1^2}}\end{smallmatrix})
\\
\tilde{\phi}_2(s)&{\rm when}&\begin{smallmatrix}\alpha\end{smallmatrix}=
\begin{smallmatrix}\frac{1}{2}\end{smallmatrix}(1-\begin{smallmatrix}
\frac{{\beta}_1}{{\beta}_2}\frac{\sqrt{1-{\beta}_2^2}}
{\sqrt{1-{\beta}_1^2}}\end{smallmatrix}).
\end{matrix}\right.
\]
Namely, this is a parametrization of non-{\em canonical} functions.
Thus the claim is established.
\qed
\bigskip

In fact, with the pair of {\em canonical} functions of degree one, from Eq.
\eqref{eq:Canonical_Degree_One}, one can go beyond the framework of Proposition
\ref{Pn:Convex_Combination_Degree_One}. For example, already a convex combination of
${\phi}_2(s)_{|_{b=1}}$ along with ${\phi}_2(s)_{|_{b=20}}$, results in a function
of degree two, which is illustrated in Figure \ref{Figure:Convex_Two_Canonical_1}.

\begin{figure}[H]
\centering
\begin{minipage}{0.44\linewidth}
{\rm
Consider three $\mathcal{HP}_{\color{blue}\beta}$ functions with
\mbox{$\begin{smallmatrix}{\color{blue}\beta}\end{smallmatrix}=
\begin{smallmatrix}\frac{4}{5}\end{smallmatrix}$~:}
\smallskip

Both functions (see Eq.
\eqref{eq:Canonical_Degree_One}),

$\begin{matrix}{\color{blue}f_1(s)}:={\color{blue}{\phi}_2(s)}_{|_{b=1}}=
{\color{blue}\begin{smallmatrix}\frac{\frac{1}{2}s+2}{s+1}
\end{smallmatrix}}
\\~\\
{\color{blue}f_2(s)}:={\color{blue}{\phi}_2(s)}_{|_{b=20}}=
{\color{blue}\begin{smallmatrix}
\frac{\frac{1}{2}s+40}{s+20}\end{smallmatrix}}
\end{matrix}$

are {\em canonical}, their Nyquist plots are identical. 
\smallskip

An associated convex combination is given by, \mbox{${\color{red}f_3(s)}:=
\begin{smallmatrix}\frac{2}{5}\end{smallmatrix}{\color{blue}f_1(s)}+\begin{smallmatrix}
\frac{3}{5}\end{smallmatrix}{\color{blue}f_2(s)}=\begin{smallmatrix}\frac{\frac{1}{2}
s^2+29.1s+40}{(s+1)(s+20)}\end{smallmatrix}$.} }
\end{minipage}\quad\quad\quad\begin{minipage}{0.48\linewidth}
\includegraphics[width=0.64\textwidth]{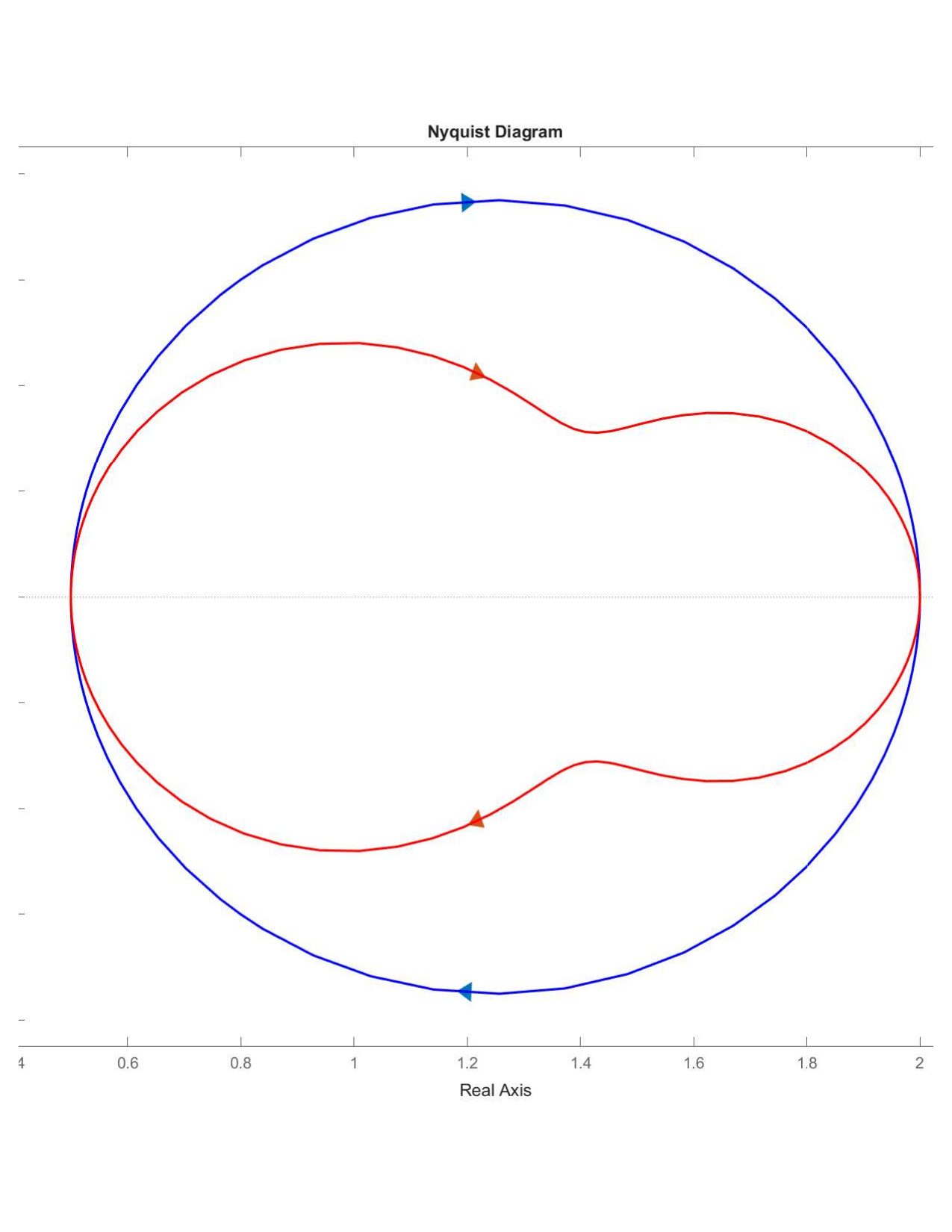}\end{minipage}
\caption{The Nyquist plot of ${\color{red}f_3(s)}$ a convex combination of
${\color{blue}f_1(s)}$, ${\color{blue}f_2(s)}$, two canonical
$\mathcal{HP}_{\color{blue}\beta}$ functions}
\label{Figure:Convex_Two_Canonical_1}
\end{figure}

A slightly richer example is given in Figure \ref{Figure:Convex_Two_Canonical_3}.

\begin{figure}[H]
\centering
\begin{minipage}{0.44\linewidth}
{\rm
Consider three $\mathcal{HP}_{\beta}$ functions with the same
${\scriptstyle\beta}$.\\
Using the {\em canonical} $\mathcal{HP}_{\color{blue}\beta}$ functions in Eqs.
\eqref{eq:Canonical_Degree_One}, \eqref{al:Phi_3} and \eqref{al:Phi_4},
we construct two convex combinations,
\mbox{${\color{blue}f_1(s)}=
\frac{1}{2}({\phi}_1+{{\phi}_4}_{|_{c=d=a}})(s)={\color{blue}
\begin{smallmatrix}\frac{1}{\beta}\end{smallmatrix}+
\begin{smallmatrix}\frac{\sqrt{1-{\beta}^2}}
{\beta}\end{smallmatrix}\frac{a(s-a)}{(s+a)^2}}$}\\
\mbox{${\color{red}f_2(s)}=\frac{1}{2}({{\phi}_2}_{|_{b=a}}+
{{\phi}_3}_{|_{c=d=a}})(s)={\color{red}\begin{smallmatrix}
\frac{1}{\beta}\end{smallmatrix}-\begin{smallmatrix}
\frac{\sqrt{1-{\beta}^2}}{\beta}\end{smallmatrix}
\frac{a(s-a)}{(s+a)^2}}$}\\ 
\mbox{${\phi}_1(s)=\begin{smallmatrix}\frac{1}{\beta}\end{smallmatrix}+
\begin{smallmatrix}\frac{\sqrt{1-{\beta}^2}}{\beta}\end{smallmatrix}
\frac{s-a}{s+a}$~,} canonical.}
\end{minipage}\quad\quad\begin{minipage}{0.50\linewidth}
\includegraphics[width=1.00\textwidth]{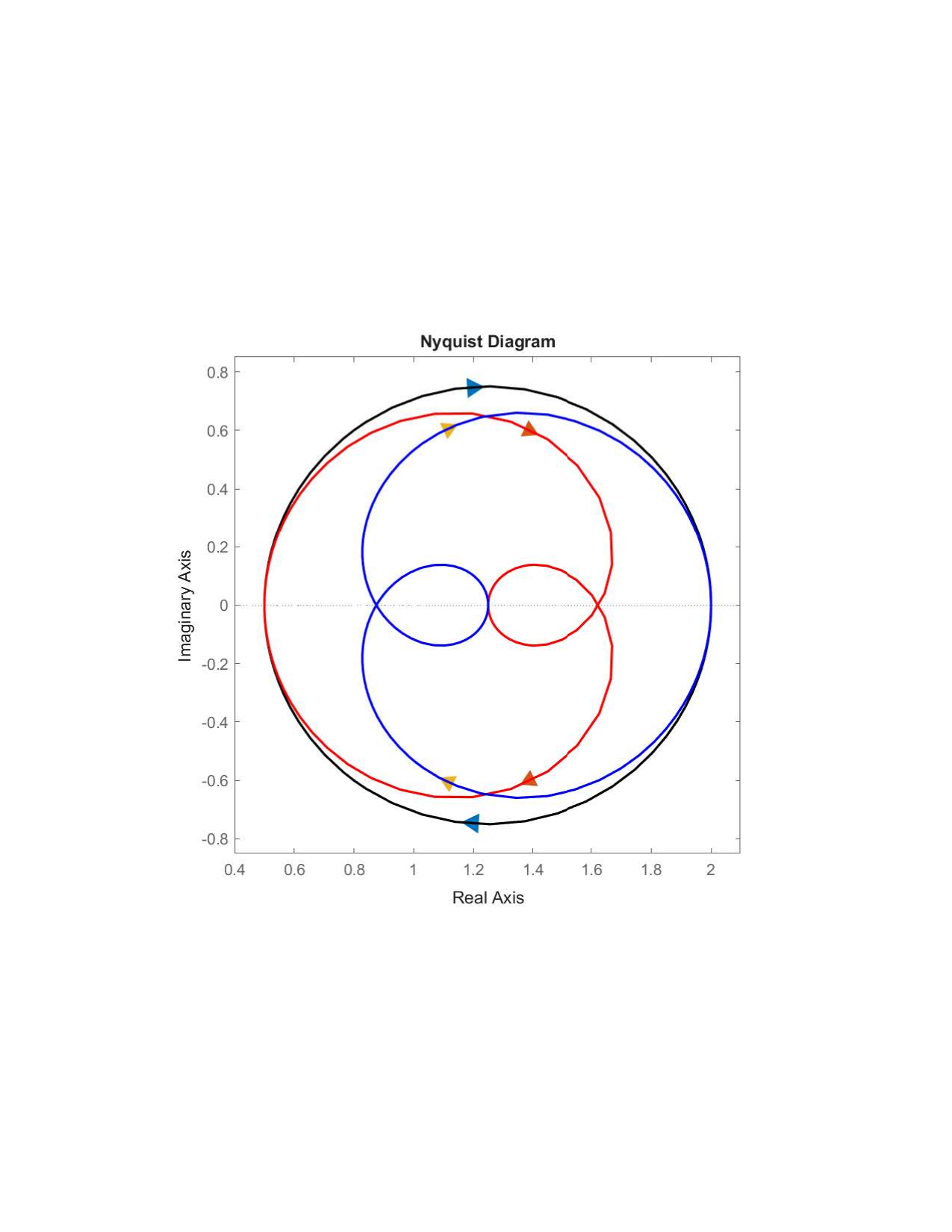}
\end{minipage}
\caption{ The Nyquist plot of ${\phi}_1(s)$,~ \mbox{${\color{blue}f_1(s)}$,}~
\mbox{${\color{red}f_2(s)}$}}
\label{Figure:Convex_Two_Canonical_3}
\end{figure}

\begin{Ex}
{\rm
We first illustrate the fact that taking a convex combination of a pair of scalar
{\em canonical} $\mathcal{HP}_{\color{blue}\beta}$ functions ``improves Lurie
stability". To simplify the construction, (as in the proof of Proposition
\ref{Pn:Convex_Combination_Degree_One}) substitute in Eq.
 \eqref{eq:Canonical_Degree_One} $b=a$, to obtain the following convex combination
of ${\phi}_1(s)$ and ${\phi}_2(s)$ (see Eqs. \eqref{eq:Canonical_Degree_One},
\eqref{eq:Alternative_Canonical_Degree_One}),
\begin{equation}\label{eq:f_{alpha}}
f_{\alpha}(s):=\begin{smallmatrix}\alpha\end{smallmatrix}{\phi}_1(s)+
\begin{smallmatrix}(1-\alpha)\end{smallmatrix}{\phi}_2(s)
=\begin{smallmatrix}\frac{1}{\beta}\end{smallmatrix}+
\begin{smallmatrix}(2\alpha-1)\frac{\sqrt{1-{\beta}^2}}{\beta}\end{smallmatrix}
\frac{s-a}{s+a}\quad\quad\quad
\begin{smallmatrix}\beta\in(0,~1)\\~\\a>0\\~\\ \alpha\in[0,~1].\end{smallmatrix}
\end{equation}
It turns out that for all $\begin{smallmatrix}\alpha\end{smallmatrix}\in(0,~1)$,
$f_{\alpha}(s)$ in Eq. \eqref{eq:f_{alpha}}, is a {\em non-canonical}
$\mathcal{HP}_{\color{blue}{\beta}_{\alpha}}$ function, with
\begin{equation}\label{eq:Beta_{alpha}}
\begin{smallmatrix}{\color{blue}{\beta}_{\alpha}}\end{smallmatrix}
=\left(\begin{smallmatrix}\frac{1}{2}\end{smallmatrix}
\left(\begin{smallmatrix}\frac{1}{\beta}(1+|2\alpha-1|\sqrt{1-{\beta}^2})\end{smallmatrix}
+\begin{smallmatrix}\frac{1}{\frac{1}{\beta}(1+|2\alpha-1|\sqrt{1-{\beta}^2})}
\end{smallmatrix}\right)\right)^{-1}.
\end{equation}
In particular for 
\mbox{$\begin{smallmatrix}\alpha\end{smallmatrix}=
\begin{smallmatrix}\frac{1}{2}\end{smallmatrix}$,}
\[
f_{\frac{1}{2}}(s)=\begin{smallmatrix}\frac{1}{2}\end{smallmatrix}({\phi}_1
+{\phi}_2)(s)\equiv\begin{smallmatrix}\frac{1}{{\beta}_{\rm NEW}}\end{smallmatrix}
\quad\forall s\in\C_R~.
\]
Namely, it is a zero degree function in
$\mathcal{HP}_{\color{blue}{\beta}_{\rm NEW}}$ where
\begin{equation}\label{eq:Beta_New}
\begin{smallmatrix}{\color{blue}{\beta}_{\rm NEW}}\end{smallmatrix}
=\left(\begin{smallmatrix}\frac{1}{2}\end{smallmatrix}\left(
\begin{smallmatrix}\frac{1}{\beta}\end{smallmatrix}+
\begin{smallmatrix}\beta\end{smallmatrix}\right)\right)^{-1}.
\end{equation}
This should be compared with item  {\bf B} (iii) of Theorem \ref{Tm:Set_Of_HP_Functions}.
}
$\T$
\end{Ex}

We next elaborate on samples of simple convex combinations of functions.

\begin{Ex}\label{Ex:Combinations_in_Figure}
{\rm
{\bf a.}~ 
Consider the {\em canonical} degree one function 
\mbox{${\color{blue}f_1(s)=\frac{2}{5}+\frac{\frac{21}{10}a_1}{s+a_1}}$,}
from Figure \ref{Fig:Degree_One_HP}.
Take also a zero degree function \mbox{$f_5(s)\equiv\frac{2}{5}$.}
\quad
It is now easy to verify that with \mbox{${\scriptstyle\alpha}=\frac{4}{9}$}
\[
{\scriptstyle\alpha}{\color{blue}f_1(s)}+{\scriptstyle(1-\alpha)}f_5(s)_{|_{a_1=a_2}}=
\begin{smallmatrix}{\color{red}\frac{2}{5}}\end{smallmatrix}+
{\color{red}\frac{{\scriptstyle\frac{14}{15}}a_2}{s+a_2}}
={\color{red}f_2(s)},
\]
where $ {\color{red}f_2(s)}$ is from Figure \ref{Fig:Degree_One_HP} as well.\\
Note that ${\color{blue}f_1(s)}$, ${\color{red}f_2(s)}$ and $f_5(s)$ are all
$\mathcal{HP}_{\beta}$ functions with ${\scriptstyle\beta}=\frac{20}{29}$.
\bigskip

{\bf b.}~ 
Consider again the {\em canonical} $\mathcal{HP}_{\beta}$ function 
\mbox{${\color{blue}f_1(s)=\frac{2}{5}+\frac{\frac{21}{10}a_1}{s+a_1}}$,}
from the previous item. From Eqs.
\eqref{eq:Canonical_Degree_One}
\eqref{eq:Inverse_Caninical_Degree_One} we know that its inverse takes the form
\[
f_6(s):=({\color{blue}f_1(s)})^{-1}=\begin{smallmatrix}\frac{5}{2}\end{smallmatrix}-
\frac{\frac{21}{10}a_6}{s+a_6}\quad{\rm with}\quad\begin{smallmatrix}a_6\end{smallmatrix}
=\begin{smallmatrix}\frac{25}{4}a_1\end{smallmatrix}~.
\]
Moreover, $f_6(s)$ is, like ${\color{blue}f_1(s)}$, a {\em canonical}
$\mathcal{HP}_{\beta}$ function with ${\scriptstyle\beta}=\frac{20}{29}$.
\smallskip

As before, take a zero degree function \mbox{$f_7(s)\equiv\frac{5}{2}$.}
It is easy to verify that with \mbox{${\scriptstyle\alpha}=\frac{5}{6}$}
\[
{\scriptstyle\alpha}f_6(s){\scriptstyle(1-\alpha)}f_7(s)_{|_{a_1=a_2}}=
\begin{smallmatrix}{\color{orange}\frac{5}{2}}\end{smallmatrix}
-{\color{orange}\frac{{\scriptstyle\frac{7}{4}}a_3}{s+a_3}}
={\color{orange}f_3(s)},
\]
where $ {\color{orange}f_3(s)}$ is from Figure \ref{Fig:Degree_One_HP} as well.\\
Note that ${\color{orange}f_3(s)}$, $f_6(s)$ and $f_7(s)$ are all
$\mathcal{HP}_{\beta}$ functions with ${\scriptstyle\beta}=\frac{20}{29}$.
\bigskip

{\bf c.}~ Return now to the function
\mbox{${\color{red}f_2(s)=\frac{2}{5}+\frac{{\scriptstyle\frac{14}{15}}a_2}{s+a_2}}$}
from item {\bf a.} of this example and from Figure \ref{Fig:Degree_One_HP}. Note now
that if one wishes to find a $\mathcal{HP}_{\beta}$ function, of degree one, which
at $s=0$ attains the value of
\mbox{${\color{red}f_2}(0)=\begin{smallmatrix}\frac{4}{3}\end{smallmatrix}$} and at
$s=\infty$ the value of
\mbox{$\frac{1}{{\color{red}f_2}(0)}=\begin{smallmatrix}\frac{3}{4}\end{smallmatrix}$,}
it must be {\em canonical}. Furthermore, it turns out to be equal to
\mbox{${\color{teal}f_4(s)=\frac{3}{4}-\frac{\frac{7}{12}a_4}{s+a_4}}$,} from Figure
\ref{Fig:Degree_One_HP} as well.
}
$\T$
\end{Ex}
\smallskip

An example of a convex combination of {\em canonical} $\mathcal{HP}_{\color{blue}\beta}$
functions, will be given in Example \ref{Ex:Convex_Degree_Two} below.

\subsection{Left vs. Right {\em canonical} $\mathcal{HP}_{\color{blue}T}$ Functions}
\label{counter-123}

Recall that {\em Right} and {\em Left} $\mathcal{HP}_{\color{blue}T}$ functions were
introduced in Eqs. \eqref{eq:HP_Delta}, \eqref{eq:Left_HP_T} respectively,
\[
\begin{matrix}
{\rm Right}&F(s)+(F(s))^*\succcurlyeq\begin{smallmatrix}{\color{blue}T}\end{smallmatrix}
+(F(s))^{\color{red}*}\begin{smallmatrix}{\color{blue}T}\end{smallmatrix}F(s)
\\~\\
{\rm Left:}&F(s)+(F(s))^*\succcurlyeq\begin{smallmatrix}{\color{blue}T}\end{smallmatrix}
+F(s)\begin{smallmatrix}{\color{blue}T}\end{smallmatrix}(F(s))^{\color{red}*}
\end{matrix}
\quad\quad\forall s\in\C_R~.
\]
In this subsection we look into the difference between these sets. First,
the following claim appeared in \cite[item (ii) of Theorem 2.8]{AlpayLew2025a}.

\begin{Pn}
Given, \mbox{$I_m\succ{\color{blue}T}\succcurlyeq 0$.} When
\mbox{$F(s)\in\mathcal{HP}_{\color{blue}T}$}, it is equivalent to having the function
\mbox{$\begin{smallmatrix}(I_m-{\color{blue}T}^2)^{-\frac{1}{2}}\end{smallmatrix}
(F(s^*))^*\begin{smallmatrix}(I_m-{\color{blue}T}^2)^{\frac{1}{2}}\end{smallmatrix}$}
in \mbox{$\mathcal{HP}_{\color{blue}T}$,} as well.
\end{Pn}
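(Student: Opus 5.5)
The plan is to reduce the claim to a pointwise statement about constant matrices and then transport it to contractions through the affine map of Lemma \ref{La:Affine_G_o_F}, where taking adjoints is harmless. Write $K:=(I_m-{\color{blue}T}^2)^{\frac{1}{2}}$ and note that $K$ commutes with ${\color{blue}T}$. Since $s\in\C_R$ if and only if $s^*\in\C_R$, the function $K^{-1}(F(s^*))^*K$ satisfies Eq. \eqref{eq:HP_Delta} for all $s\in\C_R$ exactly when, for every value $X=F(w)$ with $w\in\C_R$, the matrix $Y:=K^{-1}X^*K$ satisfies
\[
\phi_{\color{blue}T}(Y):=Y+Y^*-{\color{blue}T}-Y^*{\color{blue}T}Y\succcurlyeq 0 .
\]
Rationality and analyticity in $\C_R$ carry over directly, because $s\mapsto (F(s^*))^*$ is again rational and analytic there. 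So it suffices to prove the matrix equivalence $\phi_{\color{blue}T}(X)\succcurlyeq0 \Leftrightarrow \phi_{\color{blue}T}(K^{-1}X^*K)\succcurlyeq0$. Since $K^{-1}(K^{-1}X^*K)^*K=X$, only one implication is needed.

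For ${\color{blue}T}\succ0$ I use the congruence computed in the proof of Lemma \ref{La:Affine_G_o_F}. That computation is purely pointwise, so it shows that $\phi_{\color{blue}T}(X)\succcurlyeq 0$ if and only if
\[
G_o:={\color{blue}T}^{\frac12}(X-{\color{blue}T}^{-1})({\color{blue}T}^{-1}-{\color{blue}T})^{-\frac12}
\]
is a contraction. A contraction remains a contraction under adjoints, so $G_o^*$ is a contraction as well. Using $({\color{blue}T}^{-1}-{\color{blue}T})^{-\frac12}={\color{blue}T}^{\frac12}K^{-1}$ and the commutativity of ${\color{blue}T}$, $K$, I will check that
\[
G_o^*={\color{blue}T}^{\frac12}K^{-1}(X^*-{\color{blue}T}^{-1}){\color{blue}T}^{\frac12}
={\color{blue}T}^{\frac12}\bigl(K^{-1}X^*K-{\color{blue}T}^{-1}\bigr)({\color{blue}T}^{-1}-{\color{blue}T})^{-\frac12}.
\]
This is precisely the $G_o$ attached to $Y=K^{-1}X^*K$. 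Applying Lemma \ref{La:Affine_G_o_F} in the reverse direction then gives $\phi_{\color{blue}T}(Y)\succcurlyeq0$, which settles the nonsingular case.

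The main obstacle is a singular ${\color{blue}T}$, including ${\color{blue}T}=0$, where the result reduces to the triviality that $X+X^*\succcurlyeq0$ is adjoint invariant. There the map of Lemma \ref{La:Affine_G_o_F} is unavailable, and a naive limit ${\color{blue}T}_\varepsilon={\color{blue}T}+\varepsilon I$ fails, because $\phi_{\color{blue}T}(X)\succcurlyeq0$ need not imply $\phi_{{\color{blue}T}_\varepsilon}(X)\succcurlyeq0$. I would first try an inertia argument. The Hermitian matrix $W=\left(\begin{smallmatrix}-{\color{blue}T}&I\\ I&-{\color{blue}T}\end{smallmatrix}\right)$ has eigenvalues $\pm1-t_j$, where the $t_j\in[0,1)$ are the eigenvalues of ${\color{blue}T}$, so $W$ has exactly $m$ positive and $m$ negative eigenvalues. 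Hence the graph of $X$ is a maximal $W$-nonnegative subspace. By the standard duality for such subspaces, its $W$-orthogonal complement is a maximal $W$-nonpositive subspace. A direct computation should identify that complement with a graph built from $X^*$, and after the congruence by $\mathrm{diag}(K,K)$ it should become the $W$-nonnegative graph of $K^{-1}X^*K$. The factor $I-{\color{blue}T}^2$ enters because $W^{-1}=(I-{\color{blue}T}^2)^{-1}\left(\begin{smallmatrix}{\color{blue}T}&I\\ I&{\color{blue}T}\end{smallmatrix}\right)$. If that bookkeeping becomes awkward, the fallback is to split $\C^m=\ker{\color{blue}T}\oplus\mathrm{ran}\,{\color{blue}T}$ and treat the kernel block via the ${\color{blue}T}=0$ case, combined with a Schur complement.
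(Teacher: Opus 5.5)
The paper gives no argument for this proposition; it only quotes item (ii) of Theorem 2.8 in \cite{AlpayLew2025a}. Your proposal is therefore a self-contained alternative, and it is correct.

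The reduction to a pointwise matrix statement is sound. So is the use of the bijection $s\mapsto s^*$ of $\C_R$ together with the involution $Y\mapsto K^{-1}Y^*K$. For $T\succ 0$, the congruence in the proof of Lemma \ref{La:Affine_G_o_F} indeed gives $I_m-G_o^*G_o=(T^{-1}-T)^{-\frac12}\bigl(X+X^*-T-X^*TX\bigr)(T^{-1}-T)^{-\frac12}$. The $G_o$ attached to $K^{-1}X^*K$ is exactly $G_o^*$: both equal $T^{\frac12}K^{-1}X^*T^{\frac12}-K^{-1}$.

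You leave the singular case at ``should''. It does close along your inertia route, and that route needs no case split at all. Put $[u,u]:=u^*Wu$ and let $L$ be the graph of $X$. Then $L^{[\perp]}=W^{-1}\,{\rm ran}\left(\begin{smallmatrix}I_m\\ -X^*\end{smallmatrix}\right)$, so $W$-nonpositivity of $L^{[\perp]}$ is exactly the inequality
\[
Q:=\begin{pmatrix}I_m& -X\end{pmatrix}W^{-1}\begin{pmatrix}I_m\\ -X^*\end{pmatrix}
=TK^{-2}-XK^{-2}-K^{-2}X^*+XTK^{-2}X^*\preccurlyeq 0 .
\]
Since $K$ commutes with $T$, one gets $-KQK=Y+Y^*-T-Y^*TY$ for $Y=K^{-1}X^*K$, using $Y^*=KXK^{-1}$.

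The nonpositivity itself follows from an elementary maximality argument. Suppose $v\in L^{[\perp]}$ had $[v,v]>0$. Then $v\notin L$, and $L+{\rm span}\{v\}$ would be an $(m+1)$-dimensional $W$-nonnegative subspace. This is impossible, because such a subspace would have to meet the $m$-dimensional negative eigenspace of $W$ nontrivially.

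This dualization handles every $I_m\succ T\succcurlyeq 0$ at once; for $T=0$ it collapses to the adjoint invariance of $X+X^*\succcurlyeq 0$. So both the Lemma-based argument and your kernel/range Schur-complement fallback are superfluous for the proof. What the Lemma route buys is interpretation. For $T\succ 0$, the symmetry $F(s)\mapsto K^{-1}(F(s^*))^*K$ is nothing but $G_o\mapsto G_o^*$ on the contraction attached to $F$ through the affine map \eqref{eq:Affine_G_o_F}. In other words, the proposition is adjoint invariance of contractions, transported back to $\mathcal{HP}_T$.
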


Hence, one can say that if \mbox{$F(s){\color{blue}T}\equiv{\color{blue}T}F(s)$,} for all
$s\in\C_R$, then \mbox{$F(s)=(F(s^*))^*$} so this function is both Left and Right
Hyper-Positive. However, in general this is not the case.
\smallskip

In turns out that, it is enough to consider the {\em canonical} case where $F(s)$ is of
degree zero, i.e. on $i\R$, $F(s)$ is a constant matrix. Recall that ``Hyper-Lyapunov
Matrix Inclusions" were introduced in \cite{Lewk2024a}. 

\begin{Pn}\label{Pn:Left_Right_Hyper_Lyapunov}
For \mbox{$I_m\succ{\color{blue}T}\succ 0$,} consider the following  $\C^{m\times m}$
matrices\begin{footnote}{To ease the reading, we (artificially) denote the Right and Left
elements by $A_R$ and $A_L$, respectively.}\end{footnote} $A_R$ and $A_L$.

\begin{itemize}
\item[({\rm Right})~]{} A matrix \mbox{$A_R\in\C^{m\times m}$} satisfies the equation,
\begin{equation}\label{eq:Right_Hyper_Lyapunov}
A_R+{A_R}^*={\color{blue}T}+{A_R}^{\color{red}*}{\color{blue}T}A_R~,
\end{equation}
if and only if, for some \mbox{$UU^*=I_m=U^*U$,} this $A_R$ can be written as
\begin{equation}\label{eq:A_R}
A_R={\color{blue}T}^{-1}+{\color{blue}T}^{-\frac{1}{2}}U({\color{blue}T}^{-1}-
{\color{blue}T})^{\frac{1}{2}}.
\end{equation}
\item[({\rm Left})~]{}
A matrix \mbox{$A_L\in\C^{m\times m}$} satisfies the equation
\begin{equation}\label{eq:Left_Hyper_Lyapunov}
A_L+{A_L}^*={\color{blue}T}+A_L{\color{blue}T}{A_L}^{\color{red}*},
\end{equation}
if and only if, for some \mbox{$UU^*=I_m=U^*U$,} this $A_L$ can be written as
\begin{equation}\label{eq:A_L}
A_L={\color{blue}T}^{-1}+({\color{blue}T}^{-1}-{\color{blue}T})^{\frac{1}{2}}
U{\color{blue}T}^{-\frac{1}{2}}.
\end{equation}
\end{itemize}
\end{Pn}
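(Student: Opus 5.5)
The plan is to reduce both equations to a unitarity condition by completing the square, which is the constant-matrix analogue of Lemma \ref{La:Affine_G_o_F}.

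\textbf{Right case.} Since ${\color{blue}T}\succ 0$ is invertible, we write
\[
{A_R}^*{\color{blue}T}A_R-A_R-{A_R}^*=({\color{blue}T}^{\frac{1}{2}}A_R-{\color{blue}T}^{-\frac{1}{2}})^*({\color{blue}T}^{\frac{1}{2}}A_R-{\color{blue}T}^{-\frac{1}{2}})-{\color{blue}T}^{-1}.
\]
Hence Eq. \eqref{eq:Right_Hyper_Lyapunov} is equivalent to
\[
X^*X={\color{blue}T}^{-1}-{\color{blue}T},\qquad X:={\color{blue}T}^{\frac{1}{2}}A_R-{\color{blue}T}^{-\frac{1}{2}}.
\]
This is exactly the constant version of the computation $V^*UV$ in the proof of Lemma \ref{La:Affine_G_o_F}.

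Because $I_m\succ{\color{blue}T}\succ 0$, the matrix ${\color{blue}T}^{-1}-{\color{blue}T}$ is positive definite, so its square root $Q:=({\color{blue}T}^{-1}-{\color{blue}T})^{\frac{1}{2}}$ is invertible. Set $U:=XQ^{-1}$. Then $X^*X=Q^2$ is equivalent to $U^*U=I_m$. Since $U$ is square, this also gives $UU^*=I_m$. Solving for $A_R$:
\[
A_R={\color{blue}T}^{-\frac{1}{2}}(X+{\color{blue}T}^{-\frac{1}{2}})={\color{blue}T}^{-1}+{\color{blue}T}^{-\frac{1}{2}}UQ,
\]
which is Eq. \eqref{eq:A_R}. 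For the converse, substitute this form of $A_R$: then $X=UQ$, so $X^*X=QU^*UQ=Q^2$, and the completed-square identity gives back Eq. \eqref{eq:Right_Hyper_Lyapunov}. Each step above is an equivalence, so both directions follow together.

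\textbf{Left case.} Apply the right case to $A_R:={A_L}^*$. Taking adjoints, Eq. \eqref{eq:Left_Hyper_Lyapunov} reads
\[
{A_L}^*+A_L={\color{blue}T}+(A_L^*)^*\,{\color{blue}T}\,A_L^*,
\]
which is precisely Eq. \eqref{eq:Right_Hyper_Lyapunov} for ${A_L}^*$. The right case then gives
\[
{A_L}^*={\color{blue}T}^{-1}+{\color{blue}T}^{-\frac{1}{2}}\tilde U Q
\]
for some unitary $\tilde U$. Taking the adjoint yields
\[
A_L={\color{blue}T}^{-1}+Q\,\tilde U^*\,{\color{blue}T}^{-\frac{1}{2}},
\]
which is Eq. \eqref{eq:A_L} with $U:=\tilde U^*$, again unitary.

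\textbf{Main obstacle.} Only two points need care. First, verifying the completed-square identity; expanding the right-hand side gives ${A_R}^*{\color{blue}T}A_R-{A_R}^*-A_R+{\color{blue}T}^{-1}-{\color{blue}T}^{-1}$, as required. Second, the existence of the unitary $U$ relies on the invertibility of ${\color{blue}T}^{-1}-{\color{blue}T}$, which holds because of the strict inequalities $I_m\succ{\color{blue}T}\succ 0$. If only ${\color{blue}T}\succcurlyeq 0$ were assumed, $U$ would exist only as a partial isometry, and the statement would need modification.
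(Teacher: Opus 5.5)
Your proof is correct and follows the paper's argument. Your $U=XQ^{-1}=T^{\frac{1}{2}}(A_R-T^{-1})(T^{-1}-T)^{-\frac{1}{2}}$ is exactly the paper's $U$, and your completed-square identity is the same computation showing $U^*U=I_m$ if and only if ${A_R}^*TA_R-A_R-{A_R}^*=-T$. The only difference is that you get the Left case by applying the Right case to ${A_L}^*$, whereas the paper repeats the computation with $UU^*$; this is a harmless and slightly more economical shortcut.
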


{\bf Proof :}~
(Right)\\
Eq. \eqref{eq:A_R} means that the matrix \mbox{$U={\color{blue}T}^{\frac{1}{2}}
(A_R-{\color{blue}T}^{-1})({\color{blue}T}^{-1}-{\color{blue}T})^{
-\frac{1}{2}}$} is unitary, namely
\[
\begin{matrix}
U^*U=\overbrace{({\color{blue}T}^{\frac{1}{2}}(A_R-{\color{blue}T}^{-1})
({\color{blue}T}^{-1}-{\color{blue}T})^{-\frac{1}{2}})^*}^{U^*}
\overbrace{({\color{blue}T}^{\frac{1}{2}}(A_R-{\color{blue}T}^{-1})
({\color{blue}T}^{-1}-{\color{blue}T})^{-\frac{1}{2}})}^U\\~\\
=({\color{blue}T}^{-1}-{\color{blue}T})^{-\frac{1}{2}}({A_R}^*-{\color{blue}T}^{-1})
{\color{blue}T}(A_R-{\color{blue}T}^{-1}){\color{blue}T}^{-1}-
{\color{blue}T})^{-\frac{1}{2}}\\~\\=({\color{blue}T}^{-1}-
{\color{blue}T})^{-\frac{1}{2}}(\underbrace{{A_R}^*{\color{blue}T}
A_R-A_R-{A_R}^*}_x+{\color{blue}T}^{-1})({\color{blue}T}^{-1}-
{\color{blue}T})^{-\frac{1}{2}}.
\end{matrix}
\]
Now by Eq. \eqref{eq:Right_Hyper_Lyapunov} $x$ should be equal to $-{\color{blue}T}$,
which is equivalent to having $U^*U=I_m$, so this part is established.
\smallskip

(Left)\\
Eq. \eqref{eq:A_L}
means that the matrix \mbox{$U=({\color{blue}T}^{-1}-{\color{blue}T})^{-\frac{1}{2}}(A_L-
{\color{blue}T}^{-1}){\color{blue}T}^{\frac{1}{2}}$} is unitary, namely
\[
\begin{matrix}
UU^*=\overbrace{({\color{blue}T}^{-1}-{\color{blue}T})^{-\frac{1}{2}}
(A_L-{\color{blue}T}^{-1}){\color{blue}T}^{\frac{1}{2}}}^U
\overbrace{({\color{blue}T}^{-1}-{\color{blue}T})^{-\frac{1}{2}}
(A_L-{\color{blue}T}^{-1}){\color{blue}T}^{\frac{1}{2}})^*}^{U^*}\\~\\=
({\color{blue}T}^{-1}-{\color{blue}T})^{-\frac{1}{2}}(A_L-{\color{blue}T}^{-1})
{\color{blue}T}({A_L}^*-{\color{blue}T}^{-1})({\color{blue}T}^{-1}-
{\color{blue}T})^{-\frac{1}{2}}\\~\\=({\color{blue}T}^{-1}-
{\color{blue}T})^{-\frac{1}{2}}(\underbrace{A_LT{A_L}^*-A_L-{A_L}^*}_y+
{\color{blue}T}^{-1})({\color{blue}T}^{-1}-{\color{blue}T})^{-\frac{1}{2}}.
\end{matrix}
\]
Next by Eq. \eqref{eq:Left_Hyper_Lyapunov} $y$ should be equal to $-{\color{blue}T}$,
which is equivalent to having $UU^*=I_m$, so the claim is established.
\qed
\smallskip

\begin{Rk}
{\rm
In fact, Eq. \eqref{eq:A_R} can be obtained by substituting in Eq. \eqref{eq:Affine_G_o_F}
\mbox{$F(s)\equiv A_R$} and \mbox{$G_o(s)\equiv U$.}
}
$\T$
\end{Rk}

Technically, we have the following.

\begin{Cy}
A matrix $A\in\C^{m\times m}$ satisfies both Eqs. \eqref{eq:Right_Hyper_Lyapunov} and
\eqref{eq:Left_Hyper_Lyapunov}, if and only if, this $A$ commutes with ${\color{blue}T}$.
\end{Cy}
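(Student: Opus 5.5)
The statement should be read as follows. Since $A=0$ commutes with ${\color{blue}T}$ but solves neither equation, we take a matrix $A$ that already satisfies one of Eqs. \eqref{eq:Right_Hyper_Lyapunov}, \eqref{eq:Left_Hyper_Lyapunov}, say the Right one. The claim is then that $A$ satisfies the other one as well if and only if $A{\color{blue}T}={\color{blue}T}A$.

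The plan is to work entirely through the parametrization of Proposition \ref{Pn:Left_Right_Hyper_Lyapunov}. By the (Right) part we may write
\[
A={\color{blue}T}^{-1}+X,\qquad X:={\color{blue}T}^{-\frac{1}{2}}UK,\qquad K:=({\color{blue}T}^{-1}-{\color{blue}T})^{\frac{1}{2}},\qquad U^*U=I_m .
\]
The two equations share the left-hand side $A+A^*$. So, given the Right equation, the Left one holds if and only if $A^*{\color{blue}T}A=A{\color{blue}T}A^*$.

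I would expand both products in terms of $X$. The cross terms coincide and equal ${\color{blue}T}^{-1}+X+X^*$. Hence the condition reduces to
\[
X^*{\color{blue}T}X=X{\color{blue}T}X^* .
\]
For the left side, $X^*{\color{blue}T}X=KU^*UK=K^2={\color{blue}T}^{-1}-{\color{blue}T}$. For the right side, $K$ is a function of ${\color{blue}T}$, so $K{\color{blue}T}K={\color{blue}T}K^2=I_m-{\color{blue}T}^2$. Therefore $X{\color{blue}T}X^*={\color{blue}T}^{-\frac{1}{2}}U(I_m-{\color{blue}T}^2)U^*{\color{blue}T}^{-\frac{1}{2}}$.

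Conjugating by ${\color{blue}T}^{\frac{1}{2}}$, the condition becomes $U(I_m-{\color{blue}T}^2)U^*=I_m-{\color{blue}T}^2$. Since $U$ is unitary, this says exactly that $U$ commutes with ${\color{blue}T}^2$. Because ${\color{blue}T}\succ 0$ is the unique positive square root of ${\color{blue}T}^2$, and hence a polynomial in it, this is equivalent to $U$ commuting with ${\color{blue}T}$. This passage from ${\color{blue}T}^2$ to ${\color{blue}T}$ via functional calculus is the step needing the most care; the rest is bookkeeping.

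It remains to translate commutation of $U$ into commutation of $A$. We have $A-{\color{blue}T}^{-1}={\color{blue}T}^{-\frac{1}{2}}UK$, where ${\color{blue}T}^{-\frac{1}{2}}$ and $K$ are invertible functions of ${\color{blue}T}$ and so commute with it. Hence $A$ commutes with ${\color{blue}T}$ if and only if $UK$ does, if and only if $U$ does. Both implications therefore follow.

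The case where the Left equation is assumed first is symmetric: use the (Left) parametrization $A={\color{blue}T}^{-1}+KU{\color{blue}T}^{-\frac{1}{2}}$. Alternatively, note that $A$ solves the Left equation if and only if $A^*$ solves the Right one, and $A$ commutes with ${\color{blue}T}$ if and only if $A^*$ does.
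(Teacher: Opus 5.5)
Your proof is correct. Your reading of the statement is also the right one: taken literally, $A=0$ (or $A=I_m$) commutes with $T$ but solves neither equation. So one must assume that $A$ already solves one of the two equations. The paper makes the same assumption tacitly, by working only with the parametrized matrices $A_R$ and $A_L$.

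Your route differs from the paper's in the forward direction. The paper writes $A$ in both forms, \eqref{eq:A_R} and \eqref{eq:A_L}, and equates them using the \emph{same} unitary $U$. It deduces $(I_m-T^2)^{\frac12}U=U(I_m-T^2)^{\frac12}$, and then diagonalizes $T$ to reach $UT=TU$. However, a matrix solving both equations a priori carries two unitaries, $U_R$ and $U_L=(I_m-T^2)^{-\frac12}U_R(I_m-T^2)^{\frac12}$. These coincide only once commutation is already known, and the paper's argument does not address this.

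You instead use only the Right parametrization and test the Left equation directly through $A^*TA=ATA^*$. This reduces cleanly to $U(I_m-T^2)U^*=I_m-T^2$, which is precisely the condition that $U_L$ above be unitary. So your computation supplies the step the paper skips, and it settles both implications in one pass.

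The remaining ingredients match the paper. Your reduction "$A$ commutes with $T$ iff $U$ does" is the same as the paper's first step. Your functional-calculus passage from $T^2$ to $T$ is equivalent to the paper's diagonalization argument, which implicitly uses injectivity of $t\mapsto\sqrt{1-t^2}$ on $(0,1)$. Your symmetry remark, that $A$ solves the Left equation iff $A^*$ solves the Right one, correctly covers the case where the Left equation is assumed first.
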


{\bf Proof :}
From Eqs. \eqref{eq:A_R} and \eqref{eq:A_L} it is clear that $A$ commutes with
${\color{blue}T}$, if and only if, the matrices $U$ and ${\color{blue}T}$ commute.
Thus, we next examine commutativity of $U$ and ${\color{blue}T}$.
\smallskip

If the matrices $U$ and ${\color{blue}T}$ commute, then Eqs. \eqref{eq:A_R} and
\eqref{eq:A_L} degenerate to,
\[
A={\color{blue}T}^{-1}+U({\color{blue}T}^{-2}-I_m)^{\frac{1}{2}}.
\]
For the converse, assume now that $A_L=A_R$, namely
\[
A_L={\color{blue}T}^{-1}+({\color{blue}T}^{-1}-{\color{blue}T})^{\frac{1}{2}}U
{\color{blue}T}^{-\frac{1}{2}}={\color{blue}T}^{-1}+{\color{blue}T}^{-\frac{1}{2}}
U({\color{blue}T}^{-1}-{\color{blue}T})^{\frac{1}{2}}=A_R
\]
This means that
\[
({\color{blue}T}^{-1}-{\color{blue}T})^{\frac{1}{2}}U{\color{blue}T}^{-\frac{1}{2}}
={\color{blue}T}^{-\frac{1}{2}}U({\color{blue}T}^{-1}-{\color{blue}T})^{\frac{1}{2}},
\]
and thus
\begin{equation}\label{eq:Comutativity_Condition}
(I_m-{\color{blue}T}^{2})^{\frac{1}{2}}U=U (I_m-{\color{blue}T}^{2})^{\frac{1}{2}}.
\end{equation}
Now, without loss of generality, one can write 
\[
{\color{blue}T}=V^*\left(\begin{smallmatrix}t_1&&~&&~\\~\\~&&\ddots&&~\\~\\~&&~&&t_m
\end{smallmatrix}\right)V\quad\quad\begin{smallmatrix}V^*V=I_m=VV^*
\\~\\t_1,~\ldots~,~t_m\in(0,~1),\end{smallmatrix}
\]
and hence \mbox{$(I_m-{\color{blue}T}^{2})^{\frac{1}{2}}=V^*D_{\color{blue}T}V$,} where
\mbox{$D_{\color{blue}T}:=\left(\begin{smallmatrix}\sqrt{1-t_1^2}&&~&&~\\~\\~&&\ddots&&~
\\~\\~&&~&&\sqrt{1-t_m^2}\end{smallmatrix}\right)$.} Namely $D_{\color{blue}T}$ commutes
with $VUV^*$, which is equivalent to having
$\left(\begin{smallmatrix}t_1&&~&&~\\~\\~&&\ddots&&~\\~\\~&&~&&t_m\end{smallmatrix}\right)$
commuting with $VUV^*$, which in turn means that indeed ${\color{blue}T}$ and $U$ commute,
so the claim is established.
\qed

\section{Realization of {\em canonical} $\mathcal{HP}_{\color{blue}T}$  functions}
\label{Sec:Realization}
\setcounter{equation}{0}

\subsection{Proof of part B of Theorem \ref{Tm:Kyp_Hyper_Pos_W}}~
\label{Subsec:Structure}

Part (i) is immediate from the definition of {\em canonical} Hyper-Positive functions,
see also \cite[Theorem 3.4]{AlpayLew2024}.
\bigskip

(ii)~ Recall (see e.g. \cite[Eq. (4.2)]{AlpayLew2025a}) that the quadratic form of Eq.
\eqref{eq:HP_Delta_KYP} is,
\begin{equation}\label{eq:Quadratic_Kyp_HP}
\underbrace{\left(\begin{smallmatrix}A&&B\\~\\C&&D\\~\\I_n&&0\\~\\0&&I_m\end{smallmatrix}
\right)^*}_{\left(\begin{smallmatrix}R_F\\~\\I_{n+m}\end{smallmatrix}\right)^*}
\underbrace{\left(\begin{smallmatrix}~0&&~0&&-H&&~0\\~\\~0&&-{\color{blue}T}&&~0&&~I_m
\\~\\-H&&~0&&~0&&~0\\~\\~0&&~I_m&&~0&&-{\color{blue}T}\end{smallmatrix}\right)}_V
\underbrace{\left(\begin{smallmatrix}A&&B\\~\\C&&D\\~\\I_n&&0\\~\\0&&I_m\end{smallmatrix}
\right)}_{\left(\begin{smallmatrix}R_F\\~\\I_{n+m}\end{smallmatrix}\right)}\in
\overline{\mathbf P}_{n+m}~.
\end{equation}
Thus, the quadratic form of Eq. \eqref{eq:KYP_Canonical_T} amounts to having {\em zero}
right-hand side, i.e.
\[
\left(\begin{smallmatrix}R_F\\~\\I_{n+m}\end{smallmatrix}\right)^*V\left(
\begin{smallmatrix}R_F\\~\\I_{n+m}\end{smallmatrix}\right)=0_{(n+m)\times(n+m)}~.
\]
Multiply the last relation by $({R_F}^{-1})^*$ and ${R_F}^{-1}$ from the left and from
the right, respectively, to obtain
\[
\left(\begin{smallmatrix}I_{n+m}\\~\\{R_F}^{-1}\end{smallmatrix}\right)^*V
\left(\begin{smallmatrix}I_{n+m}\\~\\{R_F}^{-1}\end{smallmatrix}\right)=
0_{(n+m)\times(n+m)}~.
\]
Since the two $2(n+m)\times{2(n+m)}$ matrices: $\left(\begin{smallmatrix}0&&I_{n+m}
\\~\\I_{n+m}&&0\end{smallmatrix}\right)$ and the above $V$ commute, this is equivalent
to having
\[
\left(\begin{smallmatrix}{R_F}^{-1}\\~\\I_{n+m}\end{smallmatrix}\right)^*V\left(
\begin{smallmatrix}{R_F}^{-1}\\~\\I_{n+m}\end{smallmatrix}\right)=0_{(n+m)\times(n+m)}~,
\]
hence the claim is established.
\qed
\smallskip

\begin{Rk}
{\rm
The right-hand side of Eqs. \eqref{eq:HP_Delta_KYP}, \eqref{eq:KYP_Canonical_T} may have
the following physical interpretation.
\smallskip

J.C. Willems quantified the ``dissipation rate" of a state-space equation,
\[
\dot{x}=Ax+Bu,\quad\quad\quad y=Cx+Du,\quad {\rm where}\quad \begin{smallmatrix}
x\in\R^n\\~\\ u, y\in\R^m,\end{smallmatrix}
\]
see Eq.  \eqref{eq:Realization}. In \cite[Theorem 4]{Will1972b} it is stated
that this system is {\em dissipative} with respect to the supply rate
\mbox{$\langle u,~y\rangle$} (an inner product of the input and the output), and the
(quadratic) dissipation rate is \mbox{$\|Mx+Nu\|_2^2$,} with parameters
$M\in\C^{r\times n}$ and $N\in\C^{r\times m}$ for some $r$.
\smallskip

Now, in the current framework, employ Eq. \eqref{eq:HP_Delta_KYP} for
\mbox{$I_m\succ{\color{blue}T}\succ 0$} and take $r=m$, to obtain
\mbox{$\left(\begin{smallmatrix}M^*\\~\\N^*\end{smallmatrix}\right)
\left(\begin{smallmatrix}M&&N \end{smallmatrix}\right)
=\left(\begin{smallmatrix}C&&D\\~\\0_{n\times m}&&I_m\end{smallmatrix}\right)^*
\left(\begin{smallmatrix}{\color{blue}T}&&0
\\~\\0&&{\color{blue}T}\end{smallmatrix}\right)
\left(\begin{smallmatrix}C&&D\\~\\0_{n\times m}&&I_m\end{smallmatrix}\right)
$.}
}
$\T$
\end{Rk}

\subsection{Proof of Corollary \ref{Cy:Parametrization_Of_Realizations}}
\label{Subsec:Parametrization_of_Realization}

(i)~ Recall that the original Eq. \eqref{eq:KYP_Canonical_T} reads,
\[
\left(\begin{smallmatrix}-H&&0\\~\\0&&I_m\end{smallmatrix}\right)R_F+{R_F}^*
\left(\begin{smallmatrix}-H&&0\\~\\0&&I_m\end{smallmatrix}\right)=
\left(\begin{smallmatrix}C~&&D\\~\\0_{m\times n}&&I_m\end{smallmatrix}\right)^*
\left(\begin{smallmatrix}{\color{blue}T}&&0\\~\\0&&{\color{blue}T}\end{smallmatrix}\right)
\left(\begin{smallmatrix}C~&&D\\~\\0_{m\times n}&&I_m\end{smallmatrix}\right),
\]
for some $H\in\mathbf{P}_n$. Consider now the following change of coordinates
\[
R_F~\longrightarrow~\left(\begin{smallmatrix}H^{-\frac{1}{2}}&0\\~\\0~&I_m\end{smallmatrix}
\right)R_F\left(\begin{smallmatrix}H^{\frac{1}{2}}&0\\~\\0&I_m\end{smallmatrix}\right)
={\footnotesize\left(\begin{array}{c|c}A&B\\ \hline C&D\end{array}\right)}.
\]
Writing down Eq. \eqref{eq:KYP_Canonical_T} explicitly, in the new coordinates, yields
\begin{equation}\label{eq:Explicit_Canonical_HP_T_Kyp}
\left(\begin{smallmatrix}-A-A^*&&C^*-B\\~\\C-B^*&&D+D^*\end{smallmatrix}\right)=
\left(\begin{smallmatrix}C^*{\color{blue}T}C&&C^*{\color{blue}T}D\\~\\D^*{\color{blue}T}C&&
D^*{\color{blue}T}D+{\color{blue}T}\end{smallmatrix}\right).
\end{equation}
First, the lower-right equation is \mbox{$D+D^*=D^*{\color{blue}T}D+{\color{blue}T}$.}
Note that in the (degenerate) scalar case the solution is \mbox{$d=\frac{1}{t}
(1-e^{i\theta}{\scriptstyle\sqrt{1-t^2}})$,} with \mbox{$\begin{smallmatrix}\theta
\end{smallmatrix}\in[0,~2\pi)$.}
And for \mbox{$D\in\C^{m\times m}$} this means that for some \mbox{$UU^*=I_m=U^*U$,}
\[
D={\color{blue}T}^{-\frac{1}{2}}(I_m-U(I_m-{\color{blue}T}^2)^{\frac{1}{2}})
{\color{blue}T}^{-\frac{1}{2}}={\color{blue}T}^{-1}-{\color{blue}T}^{-\frac{1}{2}}
U({\color{blue}T}^{-1}-{\color{blue}T})^{\frac{1}{2}}.
\]
Next, substituting this $D$ in the upper-right equation,
\mbox{$C^*-B=C^*{\color{blue}T}D$,} yields,
\[
B=C^*(I_m-TD)=C^*{\color{blue}T}^{\frac{1}{2}}U({\color{blue}T}^{-1}-
{\color{blue}T})^{\frac{1}{2}}.
\]
Now the upper-left equation is
\begin{equation}\label{eq:Lyapunov_Canonical_Realization}
-A-A^*=C^*{\color{blue}T}C,
\end{equation}
and thus
\[
A=-{\scriptstyle\frac{1}{2}}C^*{\color{blue}T}C+Y\quad\quad\begin{smallmatrix}
Y\in\C^{n\times n}\\~\\Y+Y^*=0.\end{smallmatrix}
\]
This means that \mbox{${\rm spec}(A)\subset\overline{\C}_L$.} However, we need to guarantee
that $A$ is in fact Hurwitz stable. This is next addressed.
\smallskip

Note that Eq. \eqref{eq:Lyapunov_Canonical_Realization}  suits the framework of a Lyapunov
equation, where the right-hand side is {\em semi}-definite, which means that the pair
$(A, C^*{\color{blue}T}C)$ must be observable, see e.g.
\cite[Theorem 2.4.7, Remark 2.4.9]{HornJohnson2}.
\smallskip

Let now $Y\in\C^{n\times n}$ and $C\in\C^{m\times n}$ be arbitrary, and
\mbox{${\color{blue}T}\succ 0$.}
Then the two following pairs share the same observable subspace,
\[
(Y,~C)\quad\quad\quad(Y-{\scriptstyle\frac{1}{2}}C^*{\color{blue}T}C,~C^*{\color{blue}T}C).
\]
Indeed, by the PBH eigenvectors test (see e.g. \cite[Theorem 3.3]{Zhou1996}),
\mbox{$0\not=v\in\C^n$} belongs to the orthogonal complement of the observable subspace
associated with a pair \mbox{$(Y, C)$,} if and only if,
for some \mbox{$\begin{smallmatrix}\lambda\end{smallmatrix}\in\C$,}
\[
(Y-\begin{smallmatrix}\lambda\end{smallmatrix}I_n)v=0\quad{\rm and}\quad Cv=0.
\]
This in turn is equivalent to having (with the same \mbox{$0\not=v\in\C^n$} and
the same \mbox{$\begin{smallmatrix}\lambda\end{smallmatrix}\in\C$),}
\[
\left(~(Y-{\scriptstyle\frac{1}{2}}C^*{\color{blue}T}C)-
\begin{smallmatrix}\lambda\end{smallmatrix}I_n\right)v=0
\quad{\rm and}\quad C^*{\color{blue}T}Cv=0.
\]
Hence, this part of the claim is established.
\bigskip

(ii)~ 
Substituting in Eq. \eqref{eq:Realization_Canonical}
\mbox{${\color{blue}T}=\begin{smallmatrix}{\color{blue}\beta}\end{smallmatrix}I_m$}
results in,
\[
R_F=\left({\footnotesize\begin{array}{c|c}{\scriptstyle-\frac{\color{blue}\beta}{2}}
C^*C+Y&\begin{smallmatrix}\sqrt{1-{\color{blue}\beta}^2}\end{smallmatrix}C^*U\\ \hline
C&{\scriptstyle\frac{1}{\color{blue}\beta}}(I_m-
{\scriptstyle\sqrt{1-{\color{blue}\beta}^2}}U)\end{array}}\right).
\]
Now, to obtain the {\em balanced} realization $\hat{R}_F$ from Eq.
\eqref{eq:Realization_Balanced_Canonical}, apply the change of coordinates
\[
\hat{R}_F=\left(\begin{smallmatrix}(1-{\color{blue}\beta}^2)^{-\frac{1}{4}}I_n&0\\0&I_m
\end{smallmatrix}\right)R_F\left(\begin{smallmatrix}
(1-{\color{blue}\beta}^2)^{\frac{1}{4}}I_n&0\\0&I_m\end{smallmatrix}\right),
\]
so this item is established.
\smallskip

(iii)~ Recall now that balanced realization means, see e.g. \cite[Section 3.9]{Zhou1996},
one has that \mbox{$H_o=\hat{H}_{\rm cont}=\hat{H}_{\rm obs}$} for some
$H_o\in\mathbf{P}_n$, namely
\begin{equation}\label{eq:Balanced_Realization}
-(H_oA^*+AH_o)=BB^*\quad\quad{\rm and}\quad\quad-(H_oA+A^*H_o)=C^*C.
\end{equation}
Substituting the data from Eq. \eqref{eq:Realization_Balanced_Canonical} results in
\[
-(H_o({\scriptstyle-\frac{\color{blue}\beta}{2}}C^*C+Y)+
({\scriptstyle-\frac{\color{blue}\beta}{2}}C^*C+Y)^*H_o)=
\begin{smallmatrix}\sqrt{1-{\color{blue}\beta}^2}\end{smallmatrix}C^*C=
-(H_o({\scriptstyle-\frac{\color{blue}\beta}{2}}C^*C+Y)^*+
({\scriptstyle-\frac{\color{blue}\beta}{2}}C^*C+Y)H_o)).
\]
Namely, the balanced Gramians are 
\mbox{$H_o=\begin{smallmatrix}\frac{\sqrt{1-{\color{blue}\beta}^2}}{\color{blue}\beta}
\end{smallmatrix}I_n$~.} Next, recall that the Hankel singular values
associated with a rational function, can be obtained form the controllability and the
observability Gramians, see e.g. \cite[Section 3.9]{Zhou1996}. Here, indeed the Hankel
singular values are just the entries along the diagonal of the balanced Gramian $H_o$,
as in Eq. \eqref{eq:Equal_Hankel_Singular_Values}, so this part is established.
\smallskip

(iv)~ 
Equal Hankel singular valued. ~The assumption implies that here, $H_{\rm cont}$ and
$H_{\rm obs}$, the controllability and observability Gramians, associated with $\Phi(s)$,
are so that their product satisfy
\mbox{$H_{\rm cont}H_{\rm obs}=\begin{smallmatrix}\alpha\end{smallmatrix}I_n$,}
for some $\begin{smallmatrix}\alpha\end{smallmatrix}>0$. Thus, $H_o$, the balanced Gramians
are \mbox{$H_o=\begin{smallmatrix}\sqrt{\alpha}\end{smallmatrix}I_n$.} This implies 
$\Phi(s)$ admits a {\em balanced} realization of the form,
\[
R_{\Phi}=\left({\footnotesize\begin{array}{c|c}{\scriptstyle-\frac{1}{2\sqrt{\alpha}}}C^*C+Y&
C^*U\\ \hline
C& *
\end{array}}\right),
\]
where the parameters are,
\[
\begin{smallmatrix}U\in\C^{m\times m}&&U^*U=I_m&&&&Y\in\C^{n\times n}&&Y+Y^*=0\\~\\
C\in\C^{m\times n}&&{\rm is~of~a~full~rank}&&&&{\rm the~pair}~~Y,~C&&{\rm is~observable}.
\end{smallmatrix}
\]
Indeed, one must have
\[
\underbrace{\begin{smallmatrix}\sqrt{\alpha}\end{smallmatrix}I_n}_{H_o}(A+A^*)=-BB^*=-C^*C.
\]
Now, in principle, one can always find $D$ so that the {\em balanced} realization in Eq.
\eqref{eq:Realization_Balanced_Canonical} is obtained, and the proof in complete.
\qed
\smallskip

We now illustrate item (ii) of Corollary \ref{Cy:Parametrization_Of_Realizations}

\begin{Ex}\label{Ex:Realization_Canonical}
{\rm
Substitute in the left-hand side of Eqs. \eqref{al:Phi_3}, \eqref{al:Phi_4}
\mbox{$c=d=a,$} and \mbox{$\gamma=\delta=a,$} to obtain a scalar {\em canonical}
$\mathcal{HP}_{\color{blue}\beta}$ function, of McMillan degree two,
\[
\begin{matrix}
\phi_{3,4}(s)&=&{\scriptstyle\frac{1}{\color{blue}\beta}}
\pm\frac{\scriptstyle\sqrt{1-{\scriptstyle{\color{blue}\beta}^2}}}
{\scriptstyle\color{blue}\beta}\frac{(s-a)^2}{(s+a)^2}
={\scriptstyle\frac{1}{\color{blue}\beta}}
\pm\frac{\scriptstyle\sqrt{1-{\scriptstyle{\color{blue}\beta}^2}}}
{\scriptstyle\color{blue}\beta}\mp
{\scriptstyle\frac{\sqrt{1-
{\scriptstyle{\color{blue}\beta}^2}}}{\color{blue}\beta}}
\frac{4as}
{(s+a)^2}
\end{matrix}
\quad\quad
\begin{smallmatrix}a>0\\~\\{\color{blue}\beta}\in(0,~1).\end{smallmatrix}
\]
It turns out that the corresponding balanced realizations are, 
\[
R_{\phi_{3,4}}=\left({\footnotesize\begin{array}{rr|c}-a&-2a&
\mp\sqrt{2a}\frac{(1-{\scriptstyle\color{blue}\beta}^2)^{\frac{1}{4}}}
{\scriptstyle\sqrt{\color{blue}\beta}}\\0&-a&\mp\sqrt{2a}
\frac{(1-{\scriptstyle\color{blue}\beta}^2)^{\frac{1}{4}}}
{\scriptstyle\sqrt{\color{blue}\beta}}\\ \hline
\sqrt{2a}\frac{(1-{\scriptstyle\color{blue}\beta}^2)^{\frac{1}{4}}}
{\scriptstyle\sqrt{\color{blue}\beta}}
&
\sqrt{2a}\frac{(1-{\scriptstyle\color{blue}\beta}^2)^{\frac{1}{4}}}
{\scriptstyle\sqrt{\color{blue}\beta}}
&\frac{1}{\color{blue}\beta}
\pm\frac{\sqrt{1-{\scriptstyle\color{blue}\beta}^2}}
{\color{blue}\beta}
\end{array}}\right).
\]
In this case, in Eq. \eqref{eq:Realization_Canonical}, the upper-left block is
comprized of
\mbox{${C_3}^*C_3={C_4}^*C_4=\begin{smallmatrix}\frac{2a}{\color{blue}\beta}
\sqrt{1-{\color{blue}\beta}^2}\end{smallmatrix}\left(\begin{smallmatrix}1&&1
\\~\\1&&1\end{smallmatrix}\right)$} and 
\mbox{$Y_{3,4}=\begin{smallmatrix}\frac{2a}{\color{blue}\beta}
\sqrt{1-{\color{blue}\beta}^2}\end{smallmatrix}\left(\begin{smallmatrix}~0&&1
\\~\\-1&&0\end{smallmatrix}\right)$.} Note that each pair \mbox{$Y_3, C_3~$,}
and \mbox{$Y_4, C_4~$,} is observable.\\
Finally, here \mbox{$U_{3,4}=\mp 1$.}\quad
Note now that substituting here 
\[
-(H_o{A_{3,4}}^*+A_{3,4}H_o)=B_3{B_3}^*=B_4{B_4}^*\quad{\rm and}\quad
-(H_oA_{3,4}+{A_{3,4}}^*H_o)={C_3}^*C_3={C_4}^*C_4~,
\]
results in the balanced Gramians \mbox{$H_o=
\begin{smallmatrix}\frac{\sqrt{1-{\color{blue}\beta}^2}}
{\color{blue}\beta}\end{smallmatrix}I_n~$.}
}
$\T$
\end{Ex}

\subsection{Proof of part B of Theorem \ref{Tm:Convexity_Sets_Of_Realizations}}~
\label{Subsec:Convexity_of_Realizations}

(i)~ By assumption, Eq. \eqref{eq:KYP_Canonical_T} holds, for $j=0,~1$ i.e.
\[
\underbrace{
\left(\begin{smallmatrix}-H&&0\\~\\0&&I_m\end{smallmatrix}\right)
\left(\begin{smallmatrix}A_j&&B_j\\~\\C_j&&D_j\end{smallmatrix}\right)
+
\left(\begin{smallmatrix}A_j&&B_j\\~\\C_j&&D_j\end{smallmatrix}\right)^*
\left(\begin{smallmatrix}-H&&0\\~\\0&&I_n\end{smallmatrix}\right)
}_{{\rm Left}_j}
=
\underbrace{
\left(\begin{smallmatrix}C_j~&&D_j\\~\\0_{m\times n}&&I_m\end{smallmatrix}\right)^*
\left(\begin{smallmatrix}{\color{blue}T}&&0\\~\\0
&&{\color{blue}T}\end{smallmatrix}\right)
\left(\begin{smallmatrix}C_j~&&D_j\\~\\0_{m\times n}&&I_m\end{smallmatrix}
\right)}_{{\rm Right}_j}.
\]
Next, for \mbox{$\begin{smallmatrix}\alpha\end{smallmatrix}\in[0,~1]$,} denote
\[
R_{\alpha}=\left({\footnotesize\begin{array}{c|c}A_{\alpha}&B_{\alpha}\\
\hline C_{\alpha}&D_{\alpha}\end{array}}\right)
=\left({\footnotesize\begin{array}{c|c}
\begin{smallmatrix}\alpha\end{smallmatrix}A_1+\begin{smallmatrix}
(1-\alpha)\end{smallmatrix}A_0&\begin{smallmatrix}\alpha\end{smallmatrix}
B_1+\begin{smallmatrix}(1-\alpha)\end{smallmatrix}B_0\\
\hline\begin{smallmatrix}\alpha\end{smallmatrix}C_1+
\begin{smallmatrix}(1-\alpha)\end{smallmatrix}
C_0&\begin{smallmatrix}\alpha\end{smallmatrix}D_1+\begin{smallmatrix}(1-\alpha)
\end{smallmatrix}D_0\end{array}}\right)
\]
and separately write each side of Eq. \eqref{eq:KYP_Canonical_T} 
\[
{\rm Left}_{\alpha}:=\left(\begin{smallmatrix}-H&&0\\~\\0&&I_m
\end{smallmatrix}\right)R_{\alpha}+{R_{\alpha}}^*
\left(\begin{smallmatrix}-H&&0\\~\\0&&I_m\end{smallmatrix}\right)
\]
and
\[
{\rm Right}_{\alpha}:=\left(\begin{smallmatrix}C_{\alpha}~&&D_{\alpha}
\\~\\0_{m\times n}&&I_m\end{smallmatrix}\right)^*
\left(\begin{smallmatrix}{\color{blue}T}&&0
\\~\\0&&{\color{blue}T}\end{smallmatrix}\right)
\left(\begin{smallmatrix}C_{\alpha}~&&D_{\alpha}
\\~\\0_{m\times n}&&I_m\end{smallmatrix}\right).
\]
Note now that on the one hand,
\[
{\rm Left}_{\alpha}=\begin{smallmatrix}\alpha\end{smallmatrix}{\rm Left}_1+
\begin{smallmatrix}(1-\alpha)\end{smallmatrix}{\rm Left}_0
=\begin{smallmatrix}\alpha\end{smallmatrix}{\rm Right}_1+\begin{smallmatrix}
(1-\alpha)\end{smallmatrix}{\rm Right}_0~.
\]
On the other hand, a straightforward computation yields,
\[
\begin{matrix}
{\rm Right}_{\alpha}&=&\begin{smallmatrix}\alpha\end{smallmatrix}
{\rm Right}_1+\begin{smallmatrix}(1-\alpha)\end{smallmatrix}
{\rm Right}_0-\begin{smallmatrix}\alpha(1-\alpha)\end{smallmatrix}&
\left(\begin{smallmatrix}C_1-C_0~&&D_1-D_0\\~\\
0_{m\times n}&&0_{m\times m}\end{smallmatrix}\right)^*
\left(\begin{smallmatrix}{\color{blue}T}&&0\\~\\0&&{\color{blue}T}\end{smallmatrix}
\right)\left(\begin{smallmatrix}C_1-C_0~&&D_1-D_0\\~\\0_{m\times n}&&0_{m\times m}
\end{smallmatrix}\right)\\~\\~&=&\begin{smallmatrix}\alpha\end{smallmatrix}
{\rm Right}_1+\begin{smallmatrix}(1-\alpha)\end{smallmatrix}
{\rm Right}_0-\begin{smallmatrix}\alpha(1-\alpha)\end{smallmatrix}&
\underbrace{\left(\begin{smallmatrix}(C_1-C_0)^*\\~\\(D_1-D_0)^*
\end{smallmatrix}\right)\begin{smallmatrix}{\color{blue}T}\end{smallmatrix}
\left(\begin{smallmatrix}C_1-C_0&&D_1-D_0\end{smallmatrix}\right)}_W.
\end{matrix}
\]
Thus equality holds, if and only if, 
\mbox{$
W=0_{(n+m)\times(n+m)}$.}
\smallskip

Recall now, see e.g. \cite[Theorem 1.3.22]{HJ1}, that
matrices
\[
W:=\left(\begin{smallmatrix}C_1^*-C_0^*\\~\\D_1^*-D_0^*\end{smallmatrix}\right)
\begin{smallmatrix}{\color{blue}T}\end{smallmatrix}
\left(\begin{smallmatrix}C_1-C_0&&D_1-D_0\end{smallmatrix}\right)\in
\overline{\mathbf P}_{n+m}\quad{\rm and}\quad
X:=\left(\begin{smallmatrix}C_1-C_0&&D_1-D_0\end{smallmatrix}\right)
\begin{smallmatrix}{\color{blue}T}\end{smallmatrix}
\left(\begin{smallmatrix}C_1^*-C_0^*\\~\\D_1^*-D_0^*\end{smallmatrix}\right)\in
\overline{\mathbf P}_m
\]
share the same positive eigenvalues. Thus the above condition, $W=0$, is
equivalent to $X=0$ in Eq. \eqref{eq:Convex_Realization_Canonical}. Hence
this part is established.
\smallskip

Assume now that
for some \mbox{$I_m\succ{\color{blue}T}\succ 0$,} $R_{F_0}$ and $R_{F_1}$
are two realizations of {\em canonical} $\mathcal{HP}_{\color{blue}T}$
functions, of the same McMillan degree. Thus, they share the structure
described in Eq. \eqref{eq:Realization_Canonical}. Next, assuming that
$R_{F_{\alpha}}$ is a realization of a {\em canonical}
$\mathcal{HP}_{\color{blue}T}$ function, implies that \mbox{$D_0=D_1$}
and thus $U$ is identical in both realizations. Since also,
\mbox{$C_0=C_1=C$,} it implies that \mbox{$B_0=B_1$} and in addition
\mbox{$A_0+A_0^*=A_1+A_1^*=C^*{\color{blue}T}C$.} Namely, in the terminology
of Eq.  \eqref{eq:Realization_Canonical}, $A_0$ and $A_1$ differ only in the
$Y$ part. Thus, this part of the proof is complete.
\bigskip

(ii)~
If $X\in\overline{\mathbf P}_m$ is non-zero, it has say $q$ positive eigenvalue,
for some $q\in[1,~m]$. Then also $W\in\overline{\mathbf P}_{n+m}$ has $q$
positive eigenvalue, and thus using the notation of part (i),
\[
\begin{matrix}
{\rm Right}_{\alpha}&=&
\left(\begin{smallmatrix}C_{\alpha}~&&D_{\alpha}
\\~\\0_{m\times n}&&I_m\end{smallmatrix}\right)^*
\left(\begin{smallmatrix}{\color{blue}T}&&0
\\~\\0&&{\color{blue}T}\end{smallmatrix}\right)
\left(\begin{smallmatrix}C_{\alpha}~&&D_{\alpha}
\\~\\0_{m\times n}&&I_m\end{smallmatrix}\right)
&
-\begin{smallmatrix}\alpha(1-\alpha)\end{smallmatrix}
\underbrace{\left(\begin{smallmatrix}(C_1-C_0)^*\\~\\(D_1-D_0)^*
\end{smallmatrix}\right)\begin{smallmatrix}{\color{blue}T}\end{smallmatrix}
\left(\begin{smallmatrix}C_1-C_0&&D_1-D_0\end{smallmatrix}\right)}_
{W\in\overline{\mathbf P}_{n+m}}
\\~\\~&\succcurlyeq&
\left(\begin{smallmatrix}C_{\alpha}~&&D_{\alpha}
\\~\\0_{m\times n}&&I_m\end{smallmatrix}\right)^*
\left(\begin{smallmatrix}{\color{blue}T}&&0
\\~\\0&&{\color{blue}T}\end{smallmatrix}\right)
\left(\begin{smallmatrix}C_{\alpha}~&&D_{\alpha}
\\~\\0_{m\times n}&&I_m\end{smallmatrix}\right),
&~
\end{matrix}
\]
so indeed for \mbox{${\scriptstyle\alpha}\in(0,~1)$,} $F_{\alpha}$ is a
non-{\em canonical} $\mathcal{HP}_{\color{blue}T}$ function.
Thus the proof is complete.
\qed
\bigskip

Recall that by item (i) of part {\bf A} Theorem
\ref{Tm:Convexity_Sets_Of_Realizations}, the set of \mbox{$(n+m)\times(n+m)$}
realizations of $\mathcal{HP}_{\color{blue}T}$ functions, satisfying
Eq. \eqref{eq:HP_Delta_KYP} with a prescribed $H\in\mathbf{P}_n$, is convex. In
contrast, item (i) part {\bf B} of Theorem \ref{Tm:Convexity_Sets_Of_Realizations},
shows that in the subclass of {\em canonical} functions, the conditions for
convexity of realizations, are far more restrictive.
\smallskip

\subsection{Convex Combination of Realizations}
\label{Subsec:Conex_Realizations}

Convex combination of $\mathcal{HP}_{\color{blue}T}$ rational functions
({\em canonical} and non-{\em canonical}) was addressed in Theorem
\ref{Tm:Set_Of_HP_Functions}.
Parallel discussion in the state-space framework, was given in Theorem
\ref{Tm:Convexity_Sets_Of_Realizations}. We here compare these two aspects.

\begin{Rk}\label{Rk:Sum_Functions_vs_Arrays})
{\rm
Consider the summation of a pair of $m\times m$-valued functions $F_1$, $F_2$
(taking a convex combination is a particular case). Technically, in the
framework of rational functions, the McMillan degree typically increases (up
to the sum of the original degrees). In contrast, while summing up the
corresponding realization arrays $R_{F_1}$, $R_{F_2}$, they must be of the
same dimensions, say the same McMillan degree $n$. Then, the sum may be viewed
as a realization is of a $m\times m$-valued
function of McMillan degree (of at most) $n$.
}
\end{Rk}
$\T$

To simplify the discussion, we now introduce an alternative formulation of the
KYP-type results.

Here are the details. Let $R_F\in\R^{(n+m)\times(n+m)}$ be a realization array as in Eq.
\eqref{eq:Realization}. A functions $F(s)$ is in $\mathcal{HP}_{\color{blue}T}$, with
\mbox{$I_m\succ{\color{blue}T}\succcurlyeq 0$,}
if there exist and let $H\in\mathbf{P}_n$ satisfying Eq.  \eqref{eq:HP_Delta_KYP}, i.e.
\begin{equation}\label{eq:HP_Delta_KYP_Again}
\underbrace{
\left(\begin{smallmatrix}-H&&0\\~\\0&&I_m\end{smallmatrix}\right)R_F+{R_F}^*
\left(\begin{smallmatrix}-H&&0\\~\\0&&I_m\end{smallmatrix}\right)}_{\rm Left}\succcurlyeq
\underbrace{
\left(\begin{smallmatrix}C~&&D\\~\\0_{m\times n}&&I_m\end{smallmatrix}\right)^*
\left(\begin{smallmatrix}{\color{blue}T}&&0\\~\\0&&{\color{blue}T}\end{smallmatrix}\right)
\left(\begin{smallmatrix}C~&&D\\~\\0_{m\times n}&&I_m\end{smallmatrix}\right)}_{\rm Right}.
\end{equation}
Now, item {\bf A} (i) of Theorem \ref{Tm:Convexity_Sets_Of_Realizations} asserts that for
prescribed $H$, ${\color{blue}T}$, the set of all \mbox{$(n+m)\times(n+m)$} realization
arrays $R_F$ satisfying Eq. \eqref{eq:HP_Delta_KYP_Again}, is convex.
\smallskip

Now, let us denote,
\[
\left(\begin{smallmatrix}-H^{\frac{1}{2}}&&0\\~\\0&&I_m\end{smallmatrix}\right)
R_F
\left(\begin{smallmatrix}H^{-\frac{1}{2}}&&0\\~\\0&&I_m\end{smallmatrix}\right)
=\left(\begin{smallmatrix}-H^{\frac{1}{2}}AH^{-\frac{1}{2}}&&-H^{\frac{1}{2}}B\\~\\
CH^{-\frac{1}{2}}&&D\end{smallmatrix}\right)
=
\underbrace
{\left(\begin{smallmatrix}\hat{A}&&\hat{B}\\~\\ \hat{C}&&D\end{smallmatrix}\right)
}_{
\hat{R}_F
}
\quad\quad{\rm and}\quad\quad
\underbrace{
\left(\begin{smallmatrix}0_{n\times n}&&0\\~\\0&&{\color{blue}T}
\end{smallmatrix}\right)
}_{\Theta}.
\]
\[
\hat{R}_F:=
\left(\begin{smallmatrix}-H^{\frac{1}{2}}&&0\\~\\0&&I_m\end{smallmatrix}\right)
R_F
\left(\begin{smallmatrix}H^{-\frac{1}{2}}&&0\\~\\0&&I_m\end{smallmatrix}\right)
=\left(\begin{smallmatrix}\hat{A}&&\hat{B}\\~\\ \hat{C}&&D\end{smallmatrix}\right)
\quad\quad{\rm and}\quad\quad\Theta:=\left(\begin{smallmatrix}0_{n\times n}&&0
\\~\\0&&{\color{blue}T}\end{smallmatrix}\right).
\]
For matrices, the Hyper-Lyapunov inclusions were introduced in \cite{Lewk2024a}. We
now cast in this framework, a KYP result 

\begin{La}
Eq. \eqref{eq:HP_Delta_KYP_Again} can be equivalently written as the Hyper-Lyapunov
inclusion,
\begin{equation}\label{eq:Kyp_HP_as_Hyper_Lyap_2}
\hat{R}_F+{\hat{R}_F}^*\succcurlyeq\Theta+{\hat{R}_F}^*\Theta\hat{R}_F~.
\end{equation}
\end{La}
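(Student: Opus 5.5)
The plan is to obtain Eq. \eqref{eq:Kyp_HP_as_Hyper_Lyap_2} from Eq. \eqref{eq:HP_Delta_KYP_Again} by a single congruence with an invertible matrix. Congruence preserves membership in $\overline{\mathbf P}_{n+m}$, and it is invertible, so the equivalence follows in both directions at once. The natural choice is
\[
K:=\left(\begin{smallmatrix}H^{-\frac{1}{2}}&&0\\~\\0&&I_m\end{smallmatrix}\right),
\]
which is Hermitian and nonsingular since $H\in\mathbf{P}_n$. I will multiply both sides of Eq. \eqref{eq:HP_Delta_KYP_Again} by $K$ on the left and on the right, and then identify each side.

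For the left-hand side, I will use $K\left(\begin{smallmatrix}-H&&0\\~\\0&&I_m\end{smallmatrix}\right)=\left(\begin{smallmatrix}-H^{\frac{1}{2}}&&0\\~\\0&&I_m\end{smallmatrix}\right)$. This gives
\[
K\,{\rm Left}\,K=\left(\begin{smallmatrix}-H^{\frac{1}{2}}&&0\\~\\0&&I_m\end{smallmatrix}\right)R_FK+K{R_F}^*\left(\begin{smallmatrix}-H^{\frac{1}{2}}&&0\\~\\0&&I_m\end{smallmatrix}\right)=\hat{R}_F+{\hat{R}_F}^*,
\]
which follows directly from the definition of $\hat{R}_F$.

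For the right-hand side, I first note that $\left(\begin{smallmatrix}C&&D\\~\\0&&I_m\end{smallmatrix}\right)K=\left(\begin{smallmatrix}\hat{C}&&D\\~\\0&&I_m\end{smallmatrix}\right)$ with $\hat{C}=CH^{-\frac{1}{2}}$. Expanding the block quadratic form then gives
\[
K\,{\rm Right}\,K=\left(\begin{smallmatrix}\hat{C}^*\\~\\D^*\end{smallmatrix}\right){\color{blue}T}\left(\begin{smallmatrix}\hat{C}&&D\end{smallmatrix}\right)+\left(\begin{smallmatrix}0&&0\\~\\0&&{\color{blue}T}\end{smallmatrix}\right).
\]
The second term is exactly $\Theta$. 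For the first term, I compute $\Theta\hat{R}_F=\left(\begin{smallmatrix}0\\~\\{\color{blue}T}\,(\hat{C}~~D)\end{smallmatrix}\right)$, because only the bottom block row $(\hat{C}~~D)$ of $\hat{R}_F$ survives multiplication by $\Theta$. Hence ${\hat{R}_F}^*\Theta\hat{R}_F=(\hat{C}~~D)^*{\color{blue}T}(\hat{C}~~D)$, so that $K\,{\rm Right}\,K=\Theta+{\hat{R}_F}^*\Theta\hat{R}_F$.

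Combining the two sides, ${\rm Left}-{\rm Right}\succcurlyeq 0$ if and only if $K({\rm Left}-{\rm Right})K\succcurlyeq 0$, which is exactly Eq. \eqref{eq:Kyp_HP_as_Hyper_Lyap_2}. The only real obstacle is bookkeeping. The sign convention in $\hat{R}_F$ (the $-H^{\frac{1}{2}}$ factor) has to match the $-H$ in Eq. \eqref{eq:HP_Delta_KYP_Again}. I also have to check that the $C$-block of $\hat{R}_F$ is $CH^{-\frac{1}{2}}$, unaffected by the minus sign, so that it coincides with the $C$-block coming from $K\,{\rm Right}\,K$. I will verify both points explicitly.
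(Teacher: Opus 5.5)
Your proposal is correct and matches the paper's proof. The paper also multiplies Eq. \eqref{eq:HP_Delta_KYP_Again} on both sides by $\left(\begin{smallmatrix}H^{-\frac{1}{2}}&0\\0&I_m\end{smallmatrix}\right)$, identifies the left side with $\hat{R}_F+{\hat{R}_F}^*$, and splits the right side as $\Theta+{\hat{R}_F}^*\Theta\hat{R}_F$, just as you do; you additionally state explicitly that the congruence is nonsingular, so the equivalence holds in both directions, which the paper leaves implicit.
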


{\bf Proof :}~ Indeed, 
multiplying Eq. \eqref{eq:HP_Delta_KYP_Again} by
$\left(\begin{smallmatrix}H^{-\frac{1}{2}}&&0\\~\\0&&I_m\end{smallmatrix}\right)$,
from the left and from the right,
and 
denoting $J:=\left(\begin{smallmatrix}-I_n&&0\\~\\~0&&1_m\end{smallmatrix}\right)$,
yields
\[
\begin{matrix}
\left(\begin{smallmatrix}H^{-\frac{1}{2}}&&0\\~\\0&&I_m\end{smallmatrix}\right)
{\rm Left}
\left(\begin{smallmatrix}H^{-\frac{1}{2}}&&0\\~\\0&&I_m\end{smallmatrix}\right)
&=&
\underbrace{
\left(\begin{smallmatrix}H^{\frac{1}{2}}&&0\\~\\0&&I_m\end{smallmatrix}\right)
JR_F
\left(\begin{smallmatrix}H^{-\frac{1}{2}}&&0\\~\\0&&I_m\end{smallmatrix}\right)
}_{\hat{R}_F}
+
\underbrace{
\left(\begin{smallmatrix}H^{-\frac{1}{2}}&&0\\~\\0&&I_m\end{smallmatrix}\right)
{R_F}^*J
\left(\begin{smallmatrix}H^{\frac{1}{2}}&&0\\~\\0&&I_m\end{smallmatrix}\right)
}_{{\hat{R}_F}^*}.
\end{matrix}
\]
Now,
\[
\begin{matrix}
\left(\begin{smallmatrix}H^{-\frac{1}{2}}&&0\\~\\0&&I_m\end{smallmatrix}\right){\rm Right}
\left(\begin{smallmatrix}H^{-\frac{1}{2}}&&0\\~\\0&&I_m\end{smallmatrix}\right)&=&
\left(\begin{smallmatrix}\hat{C}&&D\\~\\0_{m\times n}&&I_m\end{smallmatrix}\right)^*
\left(\begin{smallmatrix}{\color{blue}T}&&0\\~\\0&&{\color{blue}T}\end{smallmatrix}\right)
\left(\begin{smallmatrix}\hat{C}&&D\\~\\0_{m\times n}&&I_m\end{smallmatrix}\right)
\\~\\&=&\underbrace{\left(\begin{smallmatrix}0_{m\times m}&&0\\~\\
0&&{\color{blue}T}\end{smallmatrix}\right)}_{\Theta}+\left(\begin{smallmatrix}
\hat{C}&&D\\~\\0_{m\times n}&&0_{m\times m}\end{smallmatrix}\right)^*
\left(\begin{smallmatrix}{\color{blue}T}&&0\\~\\0&&0_{m\times m}\end{smallmatrix}\right)
\left(\begin{smallmatrix}\hat{C}&&D\\~\\0_{m\times n}&&0_{m\times m}\end{smallmatrix}
\right)\\~\\&=&\underbrace{\left(\begin{smallmatrix}0_{m\times m}&&0\\~\\0&&
{\color{blue}T}\end{smallmatrix}
\right)}_{\Theta}+\left(\begin{smallmatrix}0_{m\times n}&&0_{m\times m}\\~\\ \hat{C}
&&D\end{smallmatrix}\right)^*\underbrace{\left(\begin{smallmatrix}0_{m\times m}&&0\\~\\
0&&{\color{blue}T}\end{smallmatrix}\right)}_{\Theta}\left(\begin{smallmatrix}
0_{m\times n}&&0_{m\times m}\\~\\ \hat{C}&&D\end{smallmatrix}\right)\\~\\&=&
\underbrace{\left(\begin{smallmatrix}0_{m\times m}&&0\\~\\0&&{\color{blue}T}
\end{smallmatrix}\right)}_{\Theta}+{\hat{R}_F}^*\underbrace{\left(\begin{smallmatrix}
0_{m\times m}&&0\\~\\0&&{\color{blue}T}
\end{smallmatrix}\right)}_{\Theta}{\hat{R}_F},
\end{matrix}
\]
so the claim is established.
\qed
\bigskip

We conclude this work with two examples.

\begin{Ex}
{\rm 
Consider the following pair of non-{\em canonical} $\mathcal{HP}_{\color{blue}\beta}$
functions from Figure \ref{Fig:Degree_One_HP}, 
\mbox{${\color{red}f_2(s)=\begin{smallmatrix}\frac{2}{5}\end{smallmatrix}
+\begin{smallmatrix}\frac{\frac{14}{15}a_2}{s+a_2}\end{smallmatrix}}$}
and
\mbox{${\color{orange}f_3(s)=\begin{smallmatrix}\frac{5}{2}\end{smallmatrix}
-\begin{smallmatrix}\frac{\frac{7}{4}a_3}{s+a_3}\end{smallmatrix}}$.}
In Example \ref{Ex:Combinations_in_Figure} we showed how can each be obtained as a convex
combination of a pair {\em canonical} $\mathcal{HP}_{\color{blue}\beta}$ functions.
\smallskip

We here mimic the procedure with {\em realization arrays}. Furthermore, as minimal
realization of a given function, is non-unique, in each case we present two
ways of doing that. Here are the details.

{\bf a.}~
Recall that in item {\bf a.} of Example \ref{Ex:Combinations_in_Figure} we showed
that the function
\[
{\color{red}f_2(s)=\begin{smallmatrix}\frac{2}{5}\end{smallmatrix}+\frac{
\begin{smallmatrix}\frac{14}{15}
\end{smallmatrix}a_2}{s+a_2}}\quad\quad{\color{red}a_2}>0,
\]
can be obtained as a convex combination of the {\em canonical} function
\[
{\color{blue}f_1(s)=\begin{smallmatrix}\frac{2}{5}\end{smallmatrix}+
\frac{\begin{smallmatrix}\frac{21}{10}\end{smallmatrix}a_1}{s+a_1}}~
\quad\quad{\color{blue}a_1}>0,
\]
along with the zero degree function,
\[
f_5(s)\equiv\begin{smallmatrix}\frac{2}{5}\end{smallmatrix}~.
\]
{\bf (i)}~
Consider now the realization setup:
\[
R_{\color{blue}f_1}={\footnotesize\left(\begin{array}{c|c}-a_1&\sqrt{\frac{21}{10}a_1}
\\ \hline 
\sqrt{\frac{21}{10}a_1}
&\frac{2}{5}\end{array}\right)}
\quad{\rm and}\quad
\hat{R}_{\color{blue}f_1}=
\left(\begin{smallmatrix}-1&0\\~0&1\end{smallmatrix}\right)
R_{\color{blue}f_1}
\left(\begin{smallmatrix}-1&0\\~0&1\end{smallmatrix}\right)
=
{\footnotesize\left(\begin{array}{c|c}-a_1&
-\sqrt{\frac{21}{10}a_1}
\\ \hline 
-\sqrt{\frac{21}{10}a_1}
&\frac{2}{5}\end{array}\right)},
\]
are two balanced realizations of the same {\em canonical} function
${\color{blue}f_1(s)}$.
\smallskip

Consider now a convex combination of these realizations, i.e.
for ${\scriptstyle\alpha}\in[0,~1]$,
\[
\begin{matrix}
{\scriptstyle\alpha}{R}_{\color{blue}f_1}+(1-{\scriptstyle\alpha})
\hat{R}_{\color{blue}f_1}&=&{\footnotesize\left(\begin{array}{c|c}
-a_1&(2{\scriptstyle\alpha}-1)\sqrt{\frac{21}{10}a_1}\\ \hline
(2{\scriptstyle\alpha}-1)\sqrt{\frac{21}{10}a_1}&\frac{2}{5}\end{array}\right)}.
\end{matrix}
\]
Specifically, 
for \mbox{${\scriptstyle\alpha}=\frac{5}{6}$} and $a_1=a_2$,
the combination \mbox{${\scriptstyle\alpha}{R}_{
\color{blue}f_1}+(1-{\scriptstyle\alpha})\hat{R}_{\color{blue}f_1}$} results in a
 balanced realization of the same ${\color{red}f_2(s)}$, i.e.
\[
\begin{matrix}
{\scriptstyle\alpha}{R}_{\color{blue}f_1}+(1-{\scriptstyle\alpha})
\hat{R}_{\color{blue}f_1}_{|_{\alpha=\frac{5}{6}}}
&=&
{\footnotesize\left(\begin{array}{c|c}-a_1&\sqrt{\frac{14}{15}a_1}\\
\hline\sqrt{\frac{14}{15}a_1}&\frac{2}{5}\end{array}\right)}_{|_{a_1=a_2}}
=R_{\color{red}f_2}~.
\end{matrix}
\]
\smallskip

\noindent
{\bf (ii)}~
We next present another realization of the {\em canonical}
$\mathcal{HP}_{\color{blue}\beta}$ functions ${\color{blue}f_1(s)}$ and $f_5(s)$.
The realization of ${\color{blue}f_1(s)}$ is minimal (but non-balanced), while
the realization of the zero degree function
$f_5(s)$, is non-minimal, tailored to our aim.
\[
\begin{matrix}
{\color{blue}f_1(s)}=\begin{smallmatrix}\frac{2}{5}\end{smallmatrix}+
\begin{smallmatrix}\frac{\frac{21}{10}a_1}{s+a_1}
\end{smallmatrix}
&& R_{\color{blue}f_1}=\left({\footnotesize\begin{array}{c|c}-a_1&\frac{21}{10}a_1
\\ \hline 1&\frac{2}{5}\end{array}}\right)\\~\\
f_5(s)\equiv\begin{smallmatrix}\frac{2}{5}\end{smallmatrix}
&&R_{f_5}=\left({\footnotesize\begin{array}{c|c}-a_5&0\\
\hline1&\frac{2}{5}\end{array}}\right).
\end{matrix}
\]
Consider now a convex combination of these realizations, i.e.
for ${\scriptstyle\alpha}\in[0,~1]$,
\[
\begin{matrix}
{
{\scriptstyle\alpha}{R}_{\color{blue}f_1}
}_{|_{a_1=a_2}}
+{(1-{\scriptstyle\alpha})R_{f_5}
}_{|_{a_5=a_2}}&=&
{\footnotesize\left(\begin{array}{c|c}-a_2&{\scriptstyle\alpha}\frac{21}{10}a_2
\\ \hline 1&\frac{2}{5}\end{array}\right)}.
\end{matrix}
\]
Taking \mbox{${\scriptstyle\alpha=\frac{4}{9}}$} results in a minimal (non-balanced)
realization of the same ${\color{red}f_2(s)}$, i.e.
\[
\begin{matrix}
{{\scriptstyle\frac{4}{9}}{R}_{\color{blue}f_1}}_{|_{a_1=a_2}}
+{{\scriptstyle\frac{5}{9}}R_{f_5}}_{|_{a_5=a_2}}&=&
{\footnotesize\left(\begin{array}{c|c}-a_2&\frac{14}{15}a_2
\\ \hline 1&\frac{2}{5}\end{array}\right)}
=R_{\color{red}f_2}~.
\end{matrix}
\]
\smallskip

{\bf b.~}
Recall that in item {\bf b.} of Example \ref{Ex:Combinations_in_Figure} we first computed
\[
f_6(s)=({\color{blue}f_1(s)})^{-1}
=
f_6(s)=\begin{smallmatrix}\frac{5}{2}\end{smallmatrix}-\begin{smallmatrix}
\frac{\frac{21}{10}a_6}{s+a_6}\end{smallmatrix}~,
\]
and then took a convex combination of this function along with
the zero degree function
\[
f_7(s)\equiv\begin{smallmatrix}\frac{5}{2}\end{smallmatrix}~,
\]
to obtain
\[
{\color{orange}f_3(s)=\begin{smallmatrix}\frac{5}{2}\end{smallmatrix}
-\frac{\begin{smallmatrix}\frac{7}{4}
\end{smallmatrix}a_3}{s+a_3}}\quad\quad{\color{orange}a_3}>0.
\]
{\bf (i)}~
As before, a pair of corresponding balanced realizations of $f_6(s)$
is given by,
\[
R_{f_6}={\footnotesize\left(\begin{array}{c|c}-a_6&-\sqrt{\frac{21}{10}a_6}
\\ \hline\sqrt{\frac{21}{10}a_6}&\frac{5}{2}\end{array}\right)}\quad{\rm and}
\quad\hat{R}_{f_6}=\left(\begin{smallmatrix}-1&0\\~0&1\end{smallmatrix}\right)
R_{\color{blue}f_6}\left(\begin{smallmatrix}-1&0\\~0&1\end{smallmatrix}\right)
={\footnotesize\left(\begin{array}{c|c}-a_6&\sqrt{\frac{21}{10}a_6}\\ \hline
-\sqrt{\frac{21}{10}a_6}&\frac{5}{2}\end{array}\right)}.
\]
Consider now a convex combination of these realizations, i.e. for
${\scriptstyle\alpha}\in[0,~1]$,
\[
{\scriptstyle\alpha}R_{f_6}+(1-{\scriptstyle\alpha})\hat{R}_{f_6}
={\footnotesize\left(\begin{array}{c|c}-a_6&
(1-2{\scriptstyle\alpha})\sqrt{\frac{21}{10}a_6}\\ \hline 
(2{\scriptstyle\alpha}-1)\sqrt{\frac{21}{10}a_6}
&\frac{5}{2}\end{array}\right)}
_{|_{\alpha\approx 0.956,~a_6=a_3}}~.
\]
Specifically, for \mbox{${\scriptstyle\alpha}={\scriptstyle\frac{1}{2}}
(1+{\scriptstyle\frac{\sqrt{5}}{\sqrt{6}}})
\approx 0.956$} and $a_6=a_3$ the combination 
${\scriptstyle\alpha}R_{f_6}+(1-{\scriptstyle\alpha})\hat{R}_{f_6}$ results in a
balanced realization of the same ${\color{orange}f_3(s)}$, i.e.
\[
{{\scriptstyle\alpha}R_{f_6}+(1-{\scriptstyle\alpha})
\hat{R}_{f_6}}_{|_{\alpha\approx 0.956,~a_6=a_3}}
={\footnotesize\left(\begin{array}{c|c}-a_3&-\frac{\sqrt{7a_3}}{2}\\
\hline\frac{\sqrt{7a_3}}{2}&\frac{5}{2}\end{array}\right)}=R_{\color{orange}f_3}~.
\]
\smallskip

\noindent
{\bf (ii)}~
As before, we now present another realization of the {\em canonical}
$\mathcal{HP}_{\color{blue}\beta}$ functions $f_6(s)$ and the zero degree
(and thus {\em canonical}) function $f_7(s)$. The realization of
$f_6(s)$ is minimal (but non-balanced), while the realization of 
the zero degree function $f_5(s)$, is non-minimal, tailored to our aim.
\[
\begin{matrix}
f_6(s)=\begin{smallmatrix}\frac{5}{2}\end{smallmatrix}-\begin{smallmatrix}
\frac{\frac{21}{10}a_6}{s+a_6}\end{smallmatrix}&&R_{f_6}=\left({\footnotesize
\begin{array}{c|c}-a_6&-\frac{21}{10}a_6\\ \hline 1&\frac{5}{2}\end{array}}\right)
\\~\\
f_7(s)=\begin{smallmatrix}\frac{5}{2}\end{smallmatrix}&&R_{f_7}=\left(
{\footnotesize\begin{array}{c|c}-a_7&0\\ \hline 1&\frac{5}{2}\end{array}}\right).
\end{matrix}
\]
Consider now a convex combination of these realizations, i.e. for
${\scriptstyle\alpha}\in[0,~1]$,
\[
\begin{smallmatrix}\alpha\end{smallmatrix}{R_{f_6}}_{|_{a_6=a_3}}+
\begin{smallmatrix}(1-\alpha)\end{smallmatrix}{R_{f_7}}_{|_{a_7=a_3}}=
\left({\footnotesize\begin{array}{c|c}-a_3&-\alpha\frac{21}{10}a_3\\
\hline 1&\frac{5}{2}\end{array}}\right).
\]
Taking
\mbox{${\scriptstyle\alpha=\frac{5}{6}}$} results in a minimal (non-balanced) realization
of the same ${\color{orange}f_3(s)}$, i.e.
\[
\begin{smallmatrix}\frac{5}{6}\end{smallmatrix}{R_{f_6}}_{|_{a_6=a_3}} +
\begin{smallmatrix}\frac{1}{6}\end{smallmatrix}{R_{f_7}}_{|_{a_7=a_3}}=
\left({\footnotesize\begin{array}{c|c}-a_3&-\frac{7}{4}a_3\\ \hline 1&\frac{5}{2}
\end{array}}\right)=R_{\color{orange}f_3}~.
\]
}
\end{Ex}
$\T$
\smallskip

We know that convex combination of a pair of {\em canonical} $\mathcal{HP}_T$
functions typically yields a non-{\em canonical} function. This is true in both
frameworks: of rational functions and of realization arrays.
We conclude this work by illustrating that with scalar functions of degree two.

\begin{Ex}\label{Ex:Convex_Degree_Two}
{\rm
Let $\phi_3(s)$ and $\phi_4(s)$ be the degree two {\em canonical}
$\mathcal{HP}_{\color{blue}\beta}$,
\mbox{$\begin{smallmatrix}\beta\in[0,~1)\end{smallmatrix}$,} functions as
in Eqs.  \eqref{al:Phi_3} and \eqref{al:Phi_4}. To avoid increase of degree
under convex combination (see Remark \ref{Rk:Sum_Functions_vs_Arrays})
assume that \mbox{$\gamma=c>0$} and \mbox{$\delta=d>0$.} Then, one can
compactly write,
\begin{equation}\label{eq:Phi_3_and_Phi_4}
{\phi}_{3,4}(s)=\begin{smallmatrix}\frac{1}{\beta}\end{smallmatrix}
\pm\begin{smallmatrix}\frac{\sqrt{1-{\beta}^2}}{\beta}\end{smallmatrix}
\frac{(s-c)(s-d)}{(s+c)(s+d)}~.
\end{equation}
It turns out, that in this case,
the convex combination
\begin{equation}\label{eq:Convex_Canonical_Functions_Degree_Two}
{\scriptstyle\alpha}{\phi}_3(s)+{\scriptstyle(1-\alpha)}{\phi}_4(s)=
\begin{smallmatrix}\frac{1}{\beta}\end{smallmatrix}+{\scriptstyle(2\alpha-1)}
\begin{smallmatrix}\frac{\sqrt{1-{\beta}^2}}{\beta}\end{smallmatrix}
\frac{(s-c)(s-d)}{(s+c)(s+d)}~,
\quad\quad
\begin{smallmatrix}\alpha\in[0,~1]\end{smallmatrix},
\end{equation}
is a $\mathcal{HP}_{\color{blue}\beta}$ function. For
\mbox{${\scriptstyle\alpha=\frac{1}{2}}~$,} it is of degree zero. Else, it is of
degree two. For $\begin{smallmatrix}\alpha\in(0,~1)\end{smallmatrix},$ it is
non-{\em canonical}. 
\smallskip

Next, we focus on realization arrays. Here, one needs to distinguish between
two cases.\\
{\bf (i)}~ For $d\not=c$, a balanced realization of the functions in Eq.
\eqref{eq:Phi_3_and_Phi_4}, may be written as,
\[
R_{\phi_{3,4}}
=\left({\footnotesize\begin{array}{cc|c}-c&~~0&
\pm{\scriptstyle\sqrt{2c}\frac{\sqrt{d+c}}{\sqrt{d-c}}\frac{(1-
{\scriptstyle\color{blue}\beta}^2)^{\frac{1}{4}}}{\sqrt{\color{blue}\beta}}}
\\~~0&-d&\pm{i}{\scriptstyle\sqrt{2d}\frac{\sqrt{d+c}}{\sqrt{d-c}}\frac{(1-
{\scriptstyle\color{blue}\beta}^2)^{\frac{1}{4}}}{\sqrt{\color{blue}\beta}}}
\\ \hline
{\scriptstyle\sqrt{2c}\frac{\sqrt{d+c}}{\sqrt{d-c}}\frac{(1-
{\scriptstyle\color{blue}\beta}^2)^{\frac{1}{4}}}{\sqrt{\color{blue}\beta}}}
&i{\scriptstyle\sqrt{2d}\frac{\sqrt{d+c}}{\sqrt{d-c}}\frac{(1-
{\scriptstyle\color{blue}\beta}^2)^{\frac{1}{4}}}{\sqrt{\color{blue}\beta}}}
&{\scriptstyle\frac{1}{\color{blue}\beta}\pm\frac{\sqrt{1-
{\scriptstyle\color{blue}\beta}^2}}{\color{blue}\beta}}
\end{array}}\right).
\]
Hence, for $\begin{smallmatrix}\alpha\in[0,~1]\end{smallmatrix}$ a convex
combination of these realizations arrays is given by,
\[
{\scriptstyle\alpha}R_{{\phi}_3}+{\scriptstyle(1-\alpha)}R_{{\phi}_4}
=\left({\footnotesize\begin{array}{cc|c}-c&~~0&
{\scriptstyle(2\alpha-1)\sqrt{2c}\frac{\sqrt{d+c}}{\sqrt{d-c}}\frac{(1-
{\scriptstyle\color{blue}\beta}^2)^{\frac{1}{4}}}{\sqrt{\color{blue}\beta}}}
\\~~0&-d&i{\scriptstyle(2\alpha-1)\sqrt{2d}\frac{\sqrt{d+c}}
{\sqrt{d-c}}\frac{(1-
{\scriptstyle\color{blue}\beta}^2)^{\frac{1}{4}}}{\sqrt{\color{blue}\beta}}}
\\ \hline
{\scriptstyle(2\alpha-1)\sqrt{2c}\frac{\sqrt{d+c}}{\sqrt{d-c}}\frac{(1-
{\scriptstyle\color{blue}\beta}^2)^{\frac{1}{4}}}{\sqrt{\color{blue}\beta}}}
&i{\scriptstyle(2\alpha-1)\sqrt{2d}\frac{\sqrt{d+c}}{\sqrt{d-c}}\frac{(1-
{\scriptstyle\color{blue}\beta}^2)^{\frac{1}{4}}}{\sqrt{\color{blue}\beta}}}
&{\scriptstyle\frac{1}{\color{blue}\beta}+{\scriptstyle(2\alpha-1)}
\frac{\sqrt{1-{\scriptstyle\color{blue}\beta}^2}}{\color{blue}\beta}}
\end{array}}\right).
\]
For all \mbox{${\scriptstyle\alpha\in[0,~1]}$,} this can be viewed as a balanced
realization of the $\mathcal{HP}_{\color{blue}\beta}$ rational function, 
\[
\begin{smallmatrix}
\frac{1}{\color{blue}\beta}
\end{smallmatrix}
+
\begin{smallmatrix}
(2\alpha-1)\frac{\sqrt{1-{\color{blue}\beta}^2}}
{\color{blue}\beta}
\end{smallmatrix}
-
\begin{smallmatrix}
(2\alpha-1)^2\frac{\sqrt{1-{\color{blue}\beta}^2}}
{\color{blue}\beta}
\end{smallmatrix}
\frac{2(c+d)s}{(s+c)(s+d)}~,
\]
which for \mbox{${\scriptstyle\alpha\in(0,~1)}$} is a non-{\em canonical}.
\bigskip

{\bf (ii)}~ For $d=c$, a balanced realization of the functions in Eq.
\eqref{eq:Phi_3_and_Phi_4} may be written as,
\[
R_{\phi_{3,4}}=\left({\footnotesize\begin{array}{cc|c}-c&-2c&
\mp{\scriptstyle\sqrt{2c}\frac{(1-
{\scriptstyle\color{blue}\beta}^2)^{\frac{1}{4}}}
{\scriptstyle\sqrt{\color{blue}\beta}}}
\\~~0&-c&
\mp{\scriptstyle\sqrt{2c}\frac{(1-
{\scriptstyle\color{blue}\beta}^2)^{\frac{1}{4}}}
{\scriptstyle\sqrt{\color{blue}\beta}}}
\\ 
\hline
{\scriptstyle\sqrt{2c}
\frac{(1-{\scriptstyle\color{blue}\beta}^2)^{\frac{1}{4}}}
{\scriptstyle\sqrt{\color{blue}\beta}}}
&
{\scriptstyle\sqrt{2c}
\frac{(1-{\scriptstyle\color{blue}\beta}^2)^{\frac{1}{4}}}
{\scriptstyle\sqrt{\color{blue}\beta}}}
&
{\scriptstyle
\frac{1}{\color{blue}\beta}
\pm
\frac{\sqrt{1-{\scriptstyle\color{blue}\beta}^2}}
{\color{blue}\beta}}
\end{array}}\right).
\]
Hence, for $\begin{smallmatrix}\alpha\in[0,~1]\end{smallmatrix}$ a convex
combination of these realizations arrays is given by,
\[
{\scriptstyle\alpha}
R_{\phi_3}
{\scriptstyle(1-\alpha)}
R_{\phi_4}
=\left({\footnotesize\begin{array}{cc|c}-c&-2c&
{\scriptstyle(1-2\alpha)
\sqrt{2c}
\frac
{(1-{\scriptstyle\color{blue}\beta}^2)^{\frac{1}{4}}}
{\scriptstyle\sqrt{\color{blue}\beta}}
}
\\~~0&-c&
{\scriptstyle(1-2\alpha)
\sqrt{2c}
\frac{(1-{\scriptstyle\color{blue}\beta}^2)^{\frac{1}{4}}}
{\scriptstyle\sqrt{\color{blue}\beta}}
}
\\ 
\hline
{\scriptstyle
\sqrt{2c}
\frac{(1-{\scriptstyle\color{blue}\beta}^2)^{\frac{1}{4}}}
{\scriptstyle\sqrt{\color{blue}\beta}}
}
&
{\scriptstyle
\sqrt{2c}
\frac{(1-{\scriptstyle\color{blue}\beta}^2)^{\frac{1}{4}}}
{\scriptstyle\sqrt{\color{blue}\beta}}
}
&
{\scriptstyle
\frac{1}{\color{blue}\beta}
+{\scriptstyle(2\alpha-1)}
\frac{\sqrt{1-{\scriptstyle\color{blue}\beta}^2}}
{\color{blue}\beta}
}
\end{array}}\right).
\]
For all \mbox{${\scriptstyle\alpha\in[0,~1]}$,} this can be viewed as a
realization of the function,
\[
\begin{smallmatrix}
\frac{1}{\color{blue}\beta}
\end{smallmatrix}
+
\begin{smallmatrix}
(2\alpha-1)\frac{\sqrt{1-{\color{blue}\beta}^2}}
{\color{blue}\beta}
\end{smallmatrix}
\frac{(s-c)^2}{(s+c)^2}~,
\]
which is identical to the function in Eq.
\eqref{eq:Convex_Canonical_Functions_Degree_Two}, when $d=c$.\\
For \mbox{${\scriptstyle\alpha\in(0,~1)}$} this 
realization is not balanced, and the resulting function 
is a non-{\em canonical}.
}
\end{Ex}
$\T$

\begin{center}
Acknowledgment
\end{center}

The authors wish to express their appreciation and gratitude to the referees
for providing them with a thorough, constructive review, improving the final version
of this work.

\end{document}